\documentclass{article}

\usepackage{arxiv}

\usepackage[utf8]{inputenc} % allow utf-8 input
\usepackage[T1]{fontenc}    % use 8-bit T1 fonts
\usepackage{hyperref}       % hyperlinks
\usepackage{url}            % simple URL typesetting
\usepackage{booktabs}       % professional-quality tables
\usepackage{amsfonts}       % blackboard math symbols
\usepackage{nicefrac}       % compact symbols for 1/2, etc.
\usepackage{microtype}      % microtypography
\usepackage{amsthm}
\usepackage{mathtools}
\usepackage{amssymb}
\usepackage{todonotes}

\usepackage[shortlabels]{enumitem}
\usepackage{color}
\usepackage{comment}
\usepackage{subcaption}
\usepackage{graphicx}

\newtheorem{theorem}{Theorem}

\newtheorem{lemma}[theorem]{Lemma}
\newtheorem{remark}[theorem]{Remark}
\newtheorem{assumption}{Assumption}

\numberwithin{theorem}{section}

\title{Sensitivity of SDE Solutions to Perturbations of the Diffusion and Drift}
\author{Jeremiah Birrell\\
  Department of Mathematics\\
  Texas State University\\
  San Marcos, TX,  USA \\
  \texttt{jbirrell@txstate.edu}}

\begin{document}

\maketitle

\begin{abstract}
 We develop a method for bounding the sensitivity of solutions to  stochastic differential equations (SDEs) to     changes  in the     drift, $F$, and diffusion, $\sigma$, by using a combination of information-theoretic uncertainty quantification bounds,  functional inequalities, and   judiciously chosen coupled auxiliary SDEs. The method is capable of producing  non-asymptotic bounds which are well behaved in the $T\to \infty$ limit and does not require the perturbations to $F$ and $\sigma$ to be small. Our approach applies to  expectations of both time-averaged and exponentially discounted observables and also produces sensitivity bounds for linear parabolic  PDEs.  When applied to stationary solutions and Lipschitz observables, our results  produce  bounds on the $1$-Wasserstein distance between    invariant measures   which have optimal scaling in each error term. The present method significantly expands on prior information-theoretic SDE sensitivity bounds, which are only applicable to perturbations of the drift. 
\end{abstract}

\section{Introduction}
Consider two stochastic differential equations (SDEs) on $\mathbb{R}^d$,
\begin{align}\label{eq:(X,Y)}
dX_t=&F(t,X_t)dt+\sigma(t,X_t)dW_t\,,\\
dY_t=&G(t,Y_t)dt+\eta(t,Y_t)dW_t\,,\notag
\end{align}
driven by a $d$-dimensional Wiener process $W_t$; we will view $X_t$ as the baseline model and $Y_t$ as a perturbation.  Our goal is to compare observables  (i.e., measurable functionals of the paths) of the two processes, where we allow  both the  diffusion and drift   to be perturbed.  Previous  information-theoretic uncertainty quantification (UQ) methods    \cite{li2012computation,chowdhary2013distinguishing,dupuis2016path,doi:10.1137/15M1047271,katsoulakis2017scalable,gourgoulias2020biased,dupuis2020sensitivity,Birrell2020,birrell2021quantification,birrell2025concentration} are effective at producing sensitivity bounds for stochastic systems  that are well-behaved as $t\to\infty$ (e.g., when applied to time averages), however they rely on information-theoretic divergences (e.g., the KL-divergence) which require absolute continuity to produce non-trivial results. Allowing  $X_t$ and $Y_t$ to have differing diffusion terms, $\eta\neq \sigma$,   implies that the distributions of their solutions on path-space are not absolutely continuous, thus preventing us from bounding the difference in expectations under $X_t$ and $Y_t$ via a direct application of these previous information-theoretic methods.  In this work we develop an approach which circumvents this issue via the introduction of   auxiliary SDEs coupled to $X_t$ and $Y_t$, followed by  a combination of   stochastic calculus estimates, functional inequalities, and    information-theoretic  UQ bounds.

Under appropriate ergodicity assumptions on the baseline model $X_t$ (in the form of $\log$-Sobolev inequalities) our results  imply  uniform-in-time sensitivity bounds on the time-average of a function $h(t,x)$.  Specifically, in Theorem \ref{thm:bounds_time_avg} we obtain bounds of the form
\begin{align}\label{eq:time_avg_bounds_preview}
   \sup_{T>0}\left|    \mathbb{E}\left[\frac{1}{T}\int_0^T h(t,Y_t)dt\right]-  \frac{1}{T}\int_0^T E_{\mu_X^*}[h_t] dt \right|=O\left(\|F-G\|_\infty+\|\sigma-\eta\|_\infty\right) \,,
\end{align}
where $h_t\coloneqq h(t,\cdot)$, 
assuming that the drift and diffusion for $X_t$ are time-homogeneous and the process is started in an invariant distribution, $X_0\sim\mu_{X}^*$.  In particular, \eqref{eq:time_avg_bounds_preview} implies uniform convergence of time averages as the perturbations approach zero.     We note that if one is interested in initial conditions other than the invariant distribution, our results can be combined with well-established methods for proving convergence to the invariant distribution.   

  If  the drift and diffusion for both $X_t$ and $Y_t$ are time-homogeneous and they are started in  invariant distributions, $\mu_{X}^*$ and  $\mu_{Y}^*$ respectively,  then  in Theorem \ref{thm:Wasserstein}  we   derive a bound on the $1$-Wasserstein distance between these invariant distributions,
\begin{align}\label{eq:1_Wasserstein_bounds_preview}
    W_1(\mu_{Y}^*,\mu_{X}^*)=O\left(\|F-G\|_\infty+\|\sigma-\eta\|_\infty\right) \,.
\end{align}
By considering Ornstein–Uhlenbeck (OU) processes,   we show that the linear scaling of \eqref{eq:1_Wasserstein_bounds_preview} in   $\|F-G\|_\infty$   and in  $\|\sigma-\eta\|_\infty$ are both optimal.

Our method can also be used to obtain sensitivity bounds on exponentially discounted observables of time-inhomogeneous SDEs, which are of interest in applications to   economics, see, e.g.,   \cite{page2013applications,peter2014uncertainty},  and control theory, see, e.g., \cite{kushner2013numerical}.  Specifically, in Theorem \ref{thm:bounds_discounted_obs}  we derive bounds of the form
\begin{align}\label{eq:discounted_cost_bounds_preview}
\left|    \mathbb{E}\left[\int_0^\infty h(t,Y_t)  e^{-rt}dt\right]-\mathbb{E}\left[\int_0^\infty h(t,X_t)  e^{-rt}dt\right]\right|=O\left(\|F-G\|_\infty+\|\sigma-\eta\|_\infty\right) \,.
\end{align}

Finally, our method produces sensitivity bounds for  solutions to  linear parabolic  PDEs  of the form
\begin{align}\label{eq:Feynman_Kac_pde_intro}
&\partial_t u(t,x)=-A[u_t](t,x)-h(t,x)\,,\,\,\,\,u(T,x)=f(x)\,,\\
&A[u](t,x)\coloneqq\frac{1}{2}\sum_{i,j,k}\sigma^i_k(t,x)\sigma^j_k(t,x)\partial_{i}\partial_j u(x)+\sum_iF^i(t,x)\partial_iu(x)\,.\notag
\end{align}
More specifically, if $u(t,x)$ solves \eqref{eq:Feynman_Kac_pde_intro} and $v(t,x)$ solves \eqref{eq:Feynman_Kac_pde_intro} but with $G$ in place of $F$  and $\eta$ in place  $\sigma$ then,  in Theorem \ref{thm:parabolic_PDE}, we derive $L^1$ error bounds of the form
\begin{align}\label{eq:PDE_bounds_preview}
\|v(t,\cdot)-u(t,\cdot)\|_{L^1(\mu_X^*)}=  O\left(\|F-G\|_\infty+\|\sigma-\eta\|_\infty\right) 
\end{align}
 in the case where the SDE for $X_t$ is time-homogeneous and has an invariant distribution, $\mu_X^*$.

We provide an outline of the key ideas of our technique in Section \ref{sec:proof_outline}.  Applications of these results are explored in  Section \ref{sec:applications}. Details  proofs of the underlying estimates can be found in the Section \ref{sec:proofs}.
While we have previewed our results in the case of bounded perturbations, we emphasize that our method can still produce meaningful bounds  when $\|F-G\|_\infty=\infty$.  In addition, we emphasize that our theorems are non-asymptotic in nature; below we provide explicit non-asymptotic formulas for the bounds.     Our results do require $\sigma-\eta$ to be bounded and $\sigma$ to be invertible; thus the approach developed here   requires the baseline model to be a uniformly elliptic diffusion.  Generalizing beyond these assumptions is a task we leave for future work.

\subsection{Related Work}\label{sec:related}
The present work   is related to a large body of research on perturbation bounds, also called sensitivity or condition number bounds, for Markov processes and their invariant measures.  The majority of prior studies have focused on Markov chains on a discrete state-space \cite{schweitzer1968perturbation,meyer1980condition,haviv1984perturbation,funderlic1986sensitivity,seneta1988perturbation,kirkland1998applications,cho2000markov,cho2001comparison,Thiede_2015,froyland2016stability,seneta2021sensitivity},  but some authors have also studied  perturbation bounds  for Markov chains on a countable state-space \cite{rabta2008strong,cruz2020sensitivity} or a  general Polish state space \cite{rudolf2018perturbation},  and for continuous time, countable state-space Markov processes \cite{liu2012perturbation,zeifman2014perturbation}. Of particular relevance to this work are the approaches to sensitivity analysis for SDEs with perturbed drift \cite{huggins2017quantifying,Birrell2020,vladimirov2023entropy} and especially the aforementioned information-theoretic uncertainty quantification (UQ) approach to sensitivity bounds for stochastic systems \cite{li2012computation,chowdhary2013distinguishing,dupuis2016path,doi:10.1137/15M1047271,katsoulakis2017scalable,gourgoulias2020biased,dupuis2020sensitivity,Birrell2020,birrell2021quantification,birrell2025concentration}.  However,  to the best of the author's knowledge, sensitivity bounds  of the type obtained here  for SDEs with perturbed diffusion  were out of reach of established techniques.  Thus this work constitutes a novel method for obtaining sensitivity bounds on  SDEs with simultaneously perturbed diffusion and drift.

\section{Outline of the Proof} \label{sec:proof_outline}
In this section we provide  an outline of the key ideas underpinning our technique for deriving bounds of the form \eqref{eq:time_avg_bounds_preview},  \eqref{eq:1_Wasserstein_bounds_preview}, and \eqref{eq:discounted_cost_bounds_preview}. 
The key new tool that enables   consideration of perturbations to the diffusion is an appropriately defined   auxiliary process $\tilde X_t$ that has the same diffusion term as $X_t$, and is constructed so that    observables of $\tilde X_t$  are close to those of both  $X_t$ and $Y_t$.   More specifically, we introduce  two auxiliary processes  $\tilde Y_t$ and   $\tilde X_t$, coupled to $X_t$ and $Y_t$   respectively,  as follows:
\begin{align}
dX_t=&F(t,X_t)dt+\sigma(t,X_t)dW_t\,,\label{eq:X}\\
    d\tilde Y_t=&\left(G(t,\tilde Y_t)+\eta(t,\tilde Y_t)\sigma^{-1}(t,X_t)\left(F(t,X_t)-G(t,X_t)+\kappa(X_t-\tilde Y_t)\right)\right)dt +\eta(t,\tilde Y_t)dW_t \,,\label{eq:tilde_Y}\\
{}\notag\\
      d\tilde X_t=&\left(G(t,\tilde X_t)-\kappa(\tilde X_t-Y_t)\right)dt+\sigma(t,\tilde X_t)dW_t \,,\label{eq:tilde_X}\\
    dY_t=&G(t,Y_t)dt+\eta(t,Y_t)dW_t\,, \label{eq:Y}
\end{align}
where   the coupling strength, $\kappa\in[0,\infty)$, will be chosen later. As systems of SDEs, $(X_t,\tilde{Y}_t)$ and $(\tilde{X}_t,Y_t)$  have the same diffusion terms and the difference in drift is engineered so that Girsanov's theorem  implies that the distributions of $(X,\tilde Y)|_{[0,T]}$ and $(\tilde X,Y)|_{[0,T]}$ are absolutely continuous and leads to a formula for the relative entropy.  The coupling between $\tilde X_t$  and $Y_t$ in Eq.~\ref{eq:tilde_X} forces them to remain close pathwise, despite the difference in  their diffusion terms. The average size of the coupling term can be thought of as the cost of replacing $\eta$ with $\sigma$ and will be shown to scale with the size of  $\eta-\sigma$. The derivation will then proceed as follows.

\begin{enumerate}
\item\label{proof_step1} First we obtain bounds on the difference between observables of $\tilde X_t$ and $Y_t$  by using stochastic calculus estimates to compare these processes pathwise, assuming a sufficiently strong coupling strength $\kappa$ in \eqref{eq:tilde_X}. An effective pathwise comparison   necessitates using the same driving-noise process, $W_t$,  in the equations for both  $\tilde X_t$ and $Y_t$, as indicated in \eqref{eq:tilde_X}-\eqref{eq:Y}.
\item Next we use the information-theoretic UQ method (see the references in Section \ref{sec:related} and especially \cite{dupuis2016path,Birrell2020})  to bound the difference in expectations of observables   of $\{X_t\}_{t\in[0,T]}$ and $\{\tilde X_t\}_{t\in[0,T]}$.  The resulting bound  is determined by two contributions.

{\bf  Moment Generating Function:} The first involves the moment generating function (MGF)  of the   observable under the baseline process $X_t$.    For this we will employ standard tail-bounds (sub-Gaussian and sub-exponential bounds). The case of time-averaged observables requires additional care to ensure the bounds behave well as $T\to\infty$.  In that case we build on the strategy  from  \cite{WU2000435, cattiaux_guillin_2008, doi:10.1137/S0040585X97986667} for obtaining concentration inequalities, and which was previously used in \cite{Birrell2020} to obtain sensitivity bounds for SDEs with perturbed drift.  Here we generalize this technique to  time-dependent functions, $h(t,x)$,   by combining it with the Feynman-Kac semigroup bound from \cite{birrell2026quasistatic}.   

{\bf Relative Entropy:} The second contribution to the information-theoretic UQ bound  is  the relative entropy between the distributions of $\{(X_t,\tilde Y_t)\}_{t\in[0,T]}$ and $\{(\tilde X_t,Y_t)\}_{t\in[0,T]}$ on path space. This will be bounded using Girsanov's theorem, as was done in \cite{dupuis2016path} for SDEs with perturbed drift; here it is key that the two systems have the same diffusion terms.  We note that, as both components of each system must use the same driving noise process, the  change of measure in Girsanov's theorem necessarily changes the drift of both components when transforming from \eqref{eq:X}-\eqref{eq:tilde_Y} to \eqref{eq:tilde_X}-\eqref{eq:Y}.  The form of  Eq.~\eqref{eq:tilde_Y}  is designed to accommodate this change in drift, but  does not otherwise   directly impact any of the required bounds.  Girsanov's theorem leads to a formula for the relative entropy that has two main contributions, the first coming from the difference in drifts $F-G$ and  the second  involving the   size of the coupling term; the latter will be bounded via the estimates from stage \ref{proof_step1} of the proof.  In addition, the estimates from stage \ref{proof_step1}  are needed to prove  Novikov's condition, which justifies the use of Girsanov's theorem; the change in drift inherently involves the coupling term, which is unbounded, thus making this justification  non-trivial.   

\end{enumerate}
The above two-stage method is capable of producing sensitivity bounds that are well-behaved at large $T$, as previewed in \eqref{eq:time_avg_bounds_preview}.  We emphasize that a more naive direct comparison of $X_t$ and $Y_t$ using the type of estimates described in  stage \ref{proof_step1} above leads to bounds that grow exponentially in $T$, while the information-theoretic UQ methods are not directly applicable to \eqref{eq:(X,Y)} due to the difference in diffusion terms.   Thus our approach crucially relies on the auxiliary processes $\tilde{X}_t,\tilde{Y}_t$ and the combination of the two   techniques described above.

\section{Applications}\label{sec:applications}
  In this section we show how our results can be applied to derive sensitivity bounds for time-averaged and discounted observables as well as for linear parabolic PDEs.  We will work with SDEs satisfying  the   assumptions below.  Additional required assumptions and  proof details for the underlying estimates are given in Section \ref{sec:proofs}.
\begin{assumption}\label{assump:main_assumptions_1}
Let $W_t$ be a $d$-dimensional Wiener process  on a complete probability space $(\Omega,\mathcal{F},\mathbb{P})$ and suppose $F,G:[0,\infty)\times\mathbb{R}^d\to\mathbb{R}^d$ and $\sigma,\eta:[0,\infty)\times\mathbb{R}^d\to\mathbb{R}^{d\times d}$ satisfy the following: 
\begin{enumerate}
\item \label{assump:existence_uniqueness}  Assume $F(t,x)$, $G(t,x)$, $\sigma(t,x)$, and $\eta(t,x)$ are continuous,   globally Lipschitz  in $x$, uniformly in $t$, and that  $\eta$ and $\sigma$ are uniformly bounded and $F,G$ are linearly bounded in $x$, uniformly in $t$.
\item \label{assump:ICS}    Assume the initial conditions for the SDEs   satisfy $\tilde X_{0}=X_0$, $\tilde{Y}_0=Y_0$ and suppose there exists $\alpha,\tilde{\alpha}\geq 1$, $\beta,\tilde{\beta}>0$ such that 
\begin{align}
 \mathbb{E}[\|  \tilde{X}_0-Y_0\|^{2n}]\leq \alpha \beta^n n!\,,\,\,\,  \mathbb{E}[\|  \tilde{X}_0\|^{2n}]\leq \tilde{\alpha} \tilde{\beta}^n n!
\end{align}
for all $n\in\mathbb{Z}^+$, where we use $\|\cdot\|$ to denote the $\ell^2$ norm on $\mathbb{R}^d$. 
\begin{remark}
This holds if the initial distributions $\tilde{X}_0$, $Y_0$ are sub-Gaussian; see, e.g., Proposition 2.5.2 in \cite{vershynin2018high} (specifically, the proof of (i)$\implies$(ii)).
\end{remark}
\item \label{assump:G_decomp}  Assume we have a decomposition  $G=G_0+\tilde{G}$ where $\tilde{G}$ is globally $L_{\tilde{G}}$-Lipschitz in $x$, uniformly in $t$, and  $G_0$ satisfies a uniform monotonicity condition for some $C_G\geq 0$:
\begin{align}
(x-y)\cdot (G_0(t,x)-G_0(t,y))\leq -C_G\|x-y\|^2 \text{ for all }t\geq 0, x,y\in\mathbb{R}^d\,.
\end{align}
\begin{remark}
Letting $G_0=0$ (and hence $C_G=0$ and $L_{\tilde{G}}=L_G$, the Lipschitz constant for $G$) is covered by our results; a nonzero $C_G$ will result in tighter bounds.
\end{remark}
\item \label{assump:G_confining}  Assume there exists $A_G\geq 0$, $B_G>0$ such that
\begin{align}
x\cdot G(t,x)\leq A_G-B_G\|x\|^2 \,\, \text{  for all } (t,x)\in[0,\infty)\times\mathbb{R}^d\,.
\end{align}
\begin{remark}
This is a confining assumption on $G$; e.g., it holds if $G(t,x)=-r(t,x) x$ where $r(t,x)$ is bounded on $[0,\infty)\times B_R(0)$ and is bounded below by a positive constant on $[0,\infty)\times B_R^c(0)$ for some $R>0$.
\end{remark}
\item \label{assump:epsilon_lambda}  Let $\epsilon>0$ and choose $\kappa\geq 0$   large enough that  
\begin{align}\label{eq:K_gamma_eps}
K_{\kappa,\epsilon}\coloneqq 2(\kappa+C_G-L_{\tilde{G}}  -(1+\epsilon)L_{\sigma}^2/2)>0\,,
\end{align}
where $L_{\sigma}$ denotes   the Lipschitz constant for $\sigma$ in $x$ (assumed to hold uniformly in $t$) under the  Frobenius norm, $\|\cdot\|_F$. 
\item \label{assump:delta} Choose  $\delta>0$ small enough such that   
\begin{align}\label{eq:tilde_K_gamma_delta}
\tilde{K}_{\kappa,\delta}\coloneqq 2B_G-\delta \kappa>0\,.
\end{align}

\end{enumerate}
\end{assumption}
\begin{remark}\label{remark:existence_uniqueness}
Note that Assumptions \ref{assump:main_assumptions_1}\,.\,\ref{assump:existence_uniqueness}\,-\,\ref{assump:ICS}  together imply    global existence and uniqueness of strong solutions to  \eqref{eq:X}-\eqref{eq:tilde_Y} and   to \eqref{eq:tilde_X}-\eqref{eq:Y}. They also satisfy  $\mathbb{E}[\sup_{t\in[0,T]} \|X_t\|^p]<\infty$ for all $T>0$ and similarly for $\tilde X_t$, $Y_t$. For required background on SDE theory,  see, e.g.,  \cite{karatzas2014brownian}.
\end{remark}

\subsection{Time-Averaged Observables}

First we detail the results of our method when applied to time-averaged observables, assuming that the SDE for $X_t$ is time homogeneous and starts in an invariant distribution, $\mu_X^*$.  For definitions of the sub-Gaussian and Bernstein MGF bounds used below see \eqref{eq:sub_Gauss_def} and \eqref{eq:Bernstein_def} respectively.  For further background on such MGF bounds, including tools for obtaining them, see, e.g.,  Chapter 2 in \cite{vershynin2018high}, Chapter 2 in \cite{wainwright2019high} and Chapter 5 in \cite{bakry2013analysis}. Here and in the following we define    $\|\sigma-\eta\|_{F,\infty}\coloneqq\sup_{t\geq 0,x\in\mathbb{R}^d}\|\sigma(t,x)-\eta(t,x)\|_{F}$.  
\begin{theorem}\label{thm:bounds_time_avg}  
Under Assumptions \ref{assump:main_assumptions_1}, \ref{assump:h_assump1}, \ref{assump:main_assumptions_2}, and \ref{assump:sigma_invert} and with $D_{1,T},D_{2,T}$ as defined in \eqref{eq:D1_def}\,-\,\eqref{eq:D2_def} we have the following.

\begin{enumerate}
\item \label{case:time_avg_main_thm_subGauss}   Suppose that  for all $t\geq 0$, $h_t$ is $\sigma_{h_t}$-sub-Gaussian with respect to $\mu_{X}^*$ for some $\sigma_{h_t}\in(0,\infty)$ and $\sigma_{h_t}\in L^2([0,T],dt)$ for all $T>0$. Then for all $T>0$ we have
\begin{align}\label{eq:bounds_time_avg_thm_subGauss_general} 
&\left|\mathbb{E}\left[\frac{1}{T}\int_0^T h(t,Y_t)dt\right]- \frac{1}{T}\int_0^T E_{\mu_X^*}[h_t] dt \right|\leq\frac{ L_h(1+1/\epsilon)^{1/2}}{K_{\kappa,\epsilon}^{1/2}}(  1+M_h D_{1,T}) \|\sigma-\eta\|_{F,\infty}\\
& +\frac{2L_h(1-e^{-K_{\kappa,\epsilon}T/2})}{K_{\kappa,\epsilon}} (1+M_hD_{2,T}) \frac{\mathbb{E}[\|X_0-Y_0\|^{2}]^{1/2}}{T} \notag\\
&+\left(\frac{C_* }{ T}\int_0^T \sigma_{h_t}^2 dt \frac{1}{T} \int_0^T\mathbb{E}\left[ \left\|\sigma^{-1}(\tilde{X}_t)\left(F(\tilde{X}_t)-G(t,\tilde{X}_t)+\kappa(\tilde{X}_t-Y_t)\right)\right\|^2\right]dt\right)^{1/2}\notag\,.
\end{align}

\item \label{case:time_avg_main_thm_subexp} Suppose that for all $t\geq 0$,   $h_t$ satisfies a $(\sigma_{h_t},b)$-Bernstein MGF bound with respect to $\mu_{X}^*$  with $\sigma_{h_t}\in L^2([0,T],dt)$ for all $T>0$. Then   for all $T>0$ we have
\begin{align}\label{eq:bounds_time_avg_thm_subexp_general} 
&\left|\mathbb{E}\left[\frac{1}{T}\int_0^T h(t,Y_t)dt\right]- \frac{1}{T}\int_0^T E_{\mu_X^*}[h_t] dt \right|\leq\frac{ L_h(1+1/\epsilon)^{1/2}}{K_{\kappa,\epsilon}^{1/2}}(  1+M_h D_{1,T}) \|\sigma-\eta\|_{F,\infty} \\
&+\frac{2L_h(1-e^{-K_{\kappa,\epsilon}T/2})}{K_{\kappa,\epsilon}} (1+M_hD_{2,T}) \frac{\mathbb{E}[\|X_0-Y_0\|^{2}]^{1/2}}{T} \notag\\
&+\left(\frac{C_* }{ T}\int_0^T \sigma_{h_t}^2 dt \frac{1}{T} \int_0^T\mathbb{E}\left[ \left\|\sigma^{-1}(\tilde{X}_t)\left(F(\tilde{X}_t)-G(t,\tilde{X}_t)+\kappa(\tilde{X}_t-Y_t)\right)\right\|^2\right]dt\right)^{1/2}\notag\\
&+\frac{C_*b}{2T}\int_0^T\mathbb{E}\left[ \left\|\sigma^{-1}(\tilde{X}_t)\left(F(\tilde{X}_t)-G(t,\tilde{X}_t)+\kappa(\tilde{X}_t-Y_t)\right)\right\|^2\right]dt\notag\,.
\end{align}

\end{enumerate}

In both cases, if   $F-G$ and $\sigma_{h_t}$ are also uniformly bounded  and $X_0=Y_0$   then 
\begin{align}\label{eq:bounds_time_avg_thm_subGauss_bigO} 
\sup_{T>0}\left|\mathbb{E}\left[\frac{1}{T}\int_0^T h(t,Y_t)dt\right]- \frac{1}{T}\int_0^T E_{\mu_X^*}[h_t] dt \right|
=&O\left(\|F-G\|_\infty+\|\sigma-\eta\|_{F,\infty}\right)\,.
\end{align}
\end{theorem}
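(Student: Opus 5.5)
The plan is to split the quantity of interest with the triangle inequality through the auxiliary process $\tilde X_t$ of \eqref{eq:tilde_X}:
\begin{align*}
\left|\mathbb{E}\left[\tfrac{1}{T}\int_0^T h(t,Y_t)dt\right]-\tfrac{1}{T}\int_0^T E_{\mu_X^*}[h_t]dt\right|\leq \tfrac{1}{T}\int_0^T\left|\mathbb{E}[h(t,Y_t)-h(t,\tilde X_t)]\right|dt+\left|\mathbb{E}\left[\tfrac{1}{T}\int_0^T h(t,\tilde X_t)dt\right]-\mathbb{E}\left[\tfrac{1}{T}\int_0^T h(t,X_t)dt\right]\right|.
\end{align*}
Since $X_0\sim\mu_X^*$ and $X$ is time-homogeneous, $\mathbb{E}[h(t,X_t)]=E_{\mu_X^*}[h_t]$, so the second term is exactly a comparison of $X$ and $\tilde X$.

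\textbf{Pathwise term.} Using the Lipschitz assumption on $h$ (constant $L_h$, together with whatever polynomial-growth factor $M_h$ Assumption~\ref{assump:h_assump1} allows), I bound $|h(t,Y_t)-h(t,\tilde X_t)|$ by $L_h\|\tilde X_t-Y_t\|(1+M_h(\cdots))$ and apply Cauchy--Schwarz. This reduces the term to $\mathbb{E}[\|\tilde X_t-Y_t\|^2]^{1/2}$ times a moment factor, and those moment factors are what become $D_{1,T},D_{2,T}$ after invoking the moment bounds of Section~\ref{sec:proofs}. The estimate for $\mathbb{E}\|\tilde X_t-Y_t\|^2$ comes from It\^o's formula applied to $\|\tilde X_t-Y_t\|^2$. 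The drift difference is $G(\tilde X)-G(Y)-\kappa(\tilde X-Y)$, and Assumption~\ref{assump:main_assumptions_1}.\ref{assump:G_decomp} handles it, giving the factor $-(\kappa+C_G-L_{\tilde G})$. For the diffusion term, Young's inequality gives
\begin{align*}
\|\sigma(\tilde X)-\eta(Y)\|_F^2\leq (1+\epsilon)L_\sigma^2\|\tilde X-Y\|^2+(1+1/\epsilon)\|\sigma-\eta\|_{F,\infty}^2 .
\end{align*}
Together these yield $\frac{d}{dt}\mathbb{E}\|\tilde X_t-Y_t\|^2\leq -K_{\kappa,\epsilon}\mathbb{E}\|\tilde X_t-Y_t\|^2+(1+1/\epsilon)\|\sigma-\eta\|^2_{F,\infty}$. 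Gr\"onwall then gives
\begin{align*}
\mathbb{E}\|\tilde X_t-Y_t\|^2\leq e^{-K_{\kappa,\epsilon}t}\,\mathbb{E}\|X_0-Y_0\|^2+\frac{1+1/\epsilon}{K_{\kappa,\epsilon}}\|\sigma-\eta\|_{F,\infty}^2 .
\end{align*}
Taking the square root and time-averaging, $\int_0^T e^{-K_{\kappa,\epsilon}t/2}dt=2(1-e^{-K_{\kappa,\epsilon}T/2})/K_{\kappa,\epsilon}$, which produces exactly the first two lines of \eqref{eq:bounds_time_avg_thm_subGauss_general}.

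\textbf{Information-theoretic term.} I apply the Gibbs variational / Donsker--Varadhan UQ bound to the path measures $P$ of $(X,\tilde Y)$ and $Q$ of $(\tilde X,Y)$ on $[0,T]$, with observable $\frac{1}{T}\int_0^T h(t,\cdot)dt$ of the first component:
\begin{align*}
|E_Q-E_P|\leq \inf_{c>0}\frac{1}{c}\left(\log E_P\left[e^{\pm c(\bar h_T-E_P\bar h_T)}\right]+R(Q\|P)\right).
\end{align*}
Girsanov, which is valid because the joint diffusions coincide, gives
\begin{align*}
R(Q\|P)=\frac12\int_0^T\mathbb{E}\left\|\sigma^{-1}(\tilde X_t)\left(F(\tilde X_t)-G(t,\tilde X_t)+\kappa(\tilde X_t-Y_t)\right)\right\|^2dt .
\end{align*}
The justification of Novikov's condition, which is nontrivial because of the unbounded coupling term, I take from the estimates in Section~\ref{sec:proofs} (Assumption~\ref{assump:sigma_invert} and the sub-Gaussian initial data). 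For the MGF of the time average under the stationary $X$, I use the log-Sobolev-based Feynman--Kac bound assumed in Assumption~\ref{assump:main_assumptions_2} (following Wu/Cattiaux--Guillin and the time-dependent extension of \cite{birrell2026quasistatic}). This should give
\begin{align*}
\log E_P\left[e^{c(\bar h_T-E\bar h_T)}\right]\leq \frac{C_*c^2}{2T}\cdot\frac{1}{T}\int_0^T\sigma_{h_t}^2dt
\end{align*}
in the sub-Gaussian case, with the Bernstein analogue restricted to $c<T/(C_*b)$. Optimizing over $c$ gives the square-root term, plus the extra linear term $C_*bR/T$ in the Bernstein case. Establishing this MGF bound uniformly in $T$ for time-dependent $h$ is the main obstacle; I would first try to obtain it from the spectral/LSI bound on the Feynman--Kac semigroup with potential $c\,h_t/T$.

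\textbf{Big-$O$ claim.} Take $X_0=Y_0$, so the second line of the bound vanishes. Expand the relative-entropy integrand as $\le 2\|\sigma^{-1}\|^2_\infty(\|F-G\|_\infty^2+\kappa^2\mathbb{E}\|\tilde X_t-Y_t\|^2)$. Inserting the Gr\"onwall bound makes $R/T=O\bigl((\|F-G\|_\infty+\|\sigma-\eta\|_{F,\infty})^2\bigr)$ uniformly in $T$. Provided $D_{1,T}$ is bounded uniformly in $T$, which follows from the confining Assumption~\ref{assump:main_assumptions_1}.\ref{assump:G_confining} and the $\delta$ condition, every term is linear in the perturbation size. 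In the Bernstein case the extra term is quadratic, which is fine for bounded perturbations (and for large perturbations one may use the trivial bound instead).
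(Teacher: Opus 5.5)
Your proposal is correct and takes essentially the same route as the paper. It uses the triangle inequality through $\tilde X_t$, then the It\^o/Gr\"onwall bound on $\mathbb{E}[\|\tilde X_t-Y_t\|^2]$ with Cauchy--Schwarz for the pathwise term, then the Gibbs variational UQ bound with the Girsanov relative-entropy formula and the log-Sobolev Feynman--Kac MGF bound optimized over $c$; the only cosmetic difference is that you apply the variational bound to the joint path laws of $(X,\tilde Y)$ and $(\tilde X,Y)$ rather than to the $X$-marginals followed by data processing, which gives the same estimate.
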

\begin{remark}\label{remark:uniform_dependence}
This proves the result \eqref{eq:time_avg_bounds_preview} which was previewed above.  We emphasize that the implicit constant in \eqref{eq:bounds_time_avg_thm_subGauss_bigO}  depends  on the norm of $\sigma^{-1}$ and so the bound does not hold uniformly as one takes the diffusion $\sigma$ to zero.     We also note that, even when $F-G$ is not uniformly bounded, explicit finite bounds on \eqref{eq:bounds_time_avg_thm_subGauss_general}  and \eqref{eq:bounds_time_avg_thm_subexp_general}  can be obtained using  an appropriate   bound on the growth of  $F-G$ along with the moment bound \eqref{eq:E_X_2_bound}. 
Similar remarks apply to the other applications we cover below.
\end{remark}
\begin{proof}
First use  Lemma \ref{lemma:stage1_result_time_avg}  to bound the difference between observables of $Y_t$ and $\tilde{X}_t$:
\begin{align}\label{eq:time_avg_proof_X_tilde_Y_step}
&\left|\mathbb{E}\left[\frac{1}{T}\int_0^T h(t,\tilde X_t)dt\right]-\mathbb{E}\left[\frac{1}{T}\int_0^T h(t,Y_t)dt\right]\right| 
\leq\frac{ L_h(1+1/\epsilon)^{1/2}}{K_{\kappa,\epsilon}^{1/2}}(  1+M_h D_{1,T}) \|\sigma-\eta\|_{F,\infty} \\
&+\frac{2L_h(1-e^{-K_{\kappa,\epsilon}T/2})}{K_{\kappa,\epsilon}} (1+M_hD_{2,T}) \frac{\mathbb{E}[\|X_0-Y_0\|^{2}]^{1/2}}{T} \notag
\end{align}
 for all $T>0$. Next apply the UQ bound from Lemma \ref{lemma:UQ_bound_general} to the observable on path space, $g:C([0,T],\mathbb{R}^d)\to\mathbb{R}$, $g(\gamma)=\int_0^T h(t,\gamma_t)dt$ and divide by $T$ to obtain a bound on the  difference between observables of $X_t$ and $\tilde{X}_t$:
\begin{align}\label{eq:UQ_bound_X_tilde_X_time_avg}
&\pm\left(\mathbb{E}\left[\frac{1}{T}\int_0^T h(t,\tilde{X}_t)dt\right]- \frac{1}{T}\int_0^T E_{\mu_X^*}[h_t] dt \right)\\
\leq& \inf_{c>0}\left\{\frac{1}{cT}\log \mathbb{E}\left[\exp\left(\pm c\int_0^T \hat{h}(t,X_t)dt \right)\right]+\frac{1}{cT}\mathrm{KL}\left(P_{\tilde{X}|_{[0,T]}}\|P_{{X}|_{[0,T]}}\right)\right\}\,,\notag
\end{align}
where $\hat{h}(t,x)\coloneqq h(t,x)-E_{\mu_X^*}[h_t]$. Moreover,   Lemma \ref{lemma:KL_bound} provides the KL-divergence bound
\begin{align}\label{eq:time_avg_proof_KL_step}
\mathrm{KL}\left(P_{\tilde{X}|_{[0,T]}}\|P_{{X}|_{[0,T]}}\right)\leq& \frac{1}{2}\int_0^T\mathbb{E}\left[ \left\|\sigma^{-1}(\tilde{X}_t)\left(F(\tilde{X}_t)-G(t,\tilde{X}_t)+\kappa(\tilde{X}_t-Y_t)\right)\right\|^2\right]dt\,.
\end{align}

Now   consider the two tail-behavior cases.
\begin{enumerate}
\item In the sub-Gaussian case \ref{case:time_avg_main_thm_subGauss}, we combine \eqref{eq:UQ_bound_X_tilde_X_time_avg} with Lemma \ref{lemma:MGF_bound_time_avg_subGauss}   and evaluate the infimum over $c$ to obtain
\begin{align}
&\left|\mathbb{E}\left[\frac{1}{T}\int_0^T h(t,\tilde{X}_t)dt\right]- \frac{1}{T}\int_0^T E_{\mu_X^*}[h_t] dt \right|\\
\leq& \inf_{c>0}\left\{\frac{C_* c}{2 T}\int_0^T \sigma_{h_t}^2 dt+\frac{1}{cT}\mathrm{KL}\left(P_{\tilde{X}|_{[0,T]}}\|P_{{X}|_{[0,T]}}\right)\right\}\notag\\
=&\sqrt{\frac{2C_* }{ T}\int_0^T \sigma_{h_t}^2 dt \frac{1}{T}\mathrm{KL}\left(P_{\tilde{X}|_{[0,T]}}\|P_{{X}|_{[0,T]}}\right)}\,.\notag
\end{align}
 for all $T>0$. Combining this with \eqref{eq:time_avg_proof_X_tilde_Y_step} and \eqref{eq:time_avg_proof_KL_step} and using the triangle inequality we obtain the claimed result.

If $F-G$ and  $\sigma_{h_t}$ are uniformly bounded and $X_0=Y_0$ then, using  \eqref{eq:bounds_time_avg_thm_subGauss_general}, the bound on $\mathbb{E}[\|\tilde{X}_t-Y_t\|^2]$ from  \eqref{eq:E_Z_2_bound_uniform}, and the facts that $\sup_{T>0}(1-e^{-K_{\kappa,\epsilon}T/2})/T<\infty$ and $\sup_{T>0}D_{i,T}<\infty$, we can further compute
\begin{align}
&\sup_{T>0}\left|\mathbb{E}\left[\frac{1}{T}\int_0^T h(t,Y_t)dt\right]- \frac{1}{T}\int_0^T E_{\mu_X^*}[h_t] dt \right|\\
\leq&\frac{ L_h(1+1/\epsilon)^{1/2}}{K_{\kappa,\epsilon}^{1/2}}(  1+M_h D_{1,T}) \|\sigma-\eta\|_{F,\infty}\notag\\
&+\left(  2C_* (\sup_t  \sigma_{h_t})^2\|\sigma^{-1}\|_{2,\infty}^2\left(\|F-G\|_\infty^2+\frac{\kappa^2(1+1/\epsilon)\|\sigma-\eta\|_{F,\infty}^2}{K_{\kappa,\epsilon}}\right)\right)^{1/2}\notag\\
=&O\left(\|F-G\|_\infty+\|\sigma-\eta\|_{F,\infty}\right)\notag\,.
\end{align}

\item The proof in the Bernstein case   follows from Lemma \ref{lemma:MGF_bound_time_avg_subexp} by  similar calculations to the previous case; see Appendix \ref{app:time_avg_sub_exp} for details.
\end{enumerate}
\end{proof}
We emphasize that our method produces a   uniform-in-time bound \eqref{eq:bounds_time_avg_thm_subGauss_bigO}.
A more elementary argument based on directly  comparing the SDEs for $X_t$ and $Y_t$ via Gronwall's inequality would produce bounds that grow exponentially in time and would become uninformative as $T\to\infty$; similar comments apply to the other applications in this work.

\subsubsection{Wasserstein Distance Between Invariant Distributions}
When the SDE for $Y_t$ is also time-homogeneous and is started in an invariant distribution, by optimizing over all $1$-Lipschitz functions in Theorem \ref{thm:bounds_time_avg}, we obtain the following bound on the $1$-Wasserstein distance between the invariant distributions in terms of the difference between the   diffusions and drifts.  In particular, this result implies \eqref{eq:1_Wasserstein_bounds_preview}.
\begin{theorem}\label{thm:Wasserstein}
Under Assumptions  \ref{assump:main_assumptions_1}, \ref{assump:h_assump1}, \ref{assump:main_assumptions_2},  and \ref{assump:sigma_invert}, suppose also that:
\begin{enumerate}
\item  The SDE \eqref{eq:Y} for $Y_t$ is time-homogeneous.
\item $Y_0$ is distributed as $\mu^*_Y$, which is an invariant distribution for  \eqref{eq:Y}.
\item There exists $\sigma_{\mathrm{Lip}_1}\in (0,\infty)$ such that every $1$-Lipschitz function $h:\mathbb{R}^d\to\mathbb{R}$  is $\sigma_{\mathrm{Lip}_1}$-sub-Gaussian   with respect to $\mu_{X}^*$.
\end{enumerate}
If $F-G$ is bounded then we have the following bound on the $1$-Wasserstein distance:
\begin{align}\label{eq:W1_BigO_bound}
W_1(\mu_Y^*,\mu_{X}^*)\leq& \frac{  (1+1/\epsilon)^{1/2}}{K_{\kappa,\epsilon}^{1/2}} \|\sigma-\eta\|_{F,\infty}\\
&+(2C_*)^{1/2}  \sigma_{\mathrm{Lip}_1}\|\sigma^{-1}\|_{2,\infty} 
\left(  \|F-G\|_\infty^2+\frac{\kappa^2(1+1/\epsilon)\|\sigma-\eta\|_{F,\infty}^2}{K_{\kappa,\epsilon}}\right)^{1/2}\notag\,.
\end{align}
\begin{comment} Without boundedness of $F-G$ one still has
\begin{align}\label{eq:W1_bound_general}
W_1(\mu_Y^*,\mu_{X}^*)\leq&\frac{  (1+1/\epsilon)^{1/2}}{K_{\kappa,\epsilon}^{1/2}} \|\sigma-\eta\|_{F,\infty} \\
&+C_*^{1/2}  \sigma_{\mathrm{Lip}_1}\liminf_{T\to\infty}
\left(  \frac{1}{T} \int_0^T\mathbb{E}\left[ \left\|\sigma^{-1}(\tilde{X}_t)\left(F(\tilde{X}_t)-G(\tilde{X}_t)+\kappa(\tilde{X}_t-Y_t)\right)\right\|^2\right]dt\right)^{1/2}\,.\notag
\end{align}
\end{comment}
\end{theorem}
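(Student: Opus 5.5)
We will apply Theorem \ref{thm:bounds_time_avg}, case \ref{case:time_avg_main_thm_subGauss}, with a time-independent observable $h(t,x)=h(x)$, where $h$ is an arbitrary $1$-Lipschitz function, and then take the supremum over $h$ using Kantorovich--Rubinstein duality, $W_1(\mu_Y^*,\mu_X^*)=\sup_{h\in\mathrm{Lip}_1}\{E_{\mu_Y^*}[h]-E_{\mu_X^*}[h]\}$.

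Two features of the stationary setting simplify the bound. First, since $Y_0\sim\mu_Y^*$ and the $Y$-equation is time-homogeneous, $\mathbb{E}[\frac1T\int_0^T h(Y_t)dt]=E_{\mu_Y^*}[h]$ for every $T$. Likewise $\frac1T\int_0^T E_{\mu_X^*}[h_t]dt=E_{\mu_X^*}[h]$. Hence the left-hand side of \eqref{eq:bounds_time_avg_thm_subGauss_general} is exactly $|E_{\mu_Y^*}[h]-E_{\mu_X^*}[h]|$, independent of $T$, and we may let $T\to\infty$ on the right-hand side. Second, the Lipschitz constant of a time-independent $h$ gives $L_h=1$. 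The constant $M_h$ should measure the time-variation of $h_t$ (per Assumption \ref{assump:h_assump1}), so it vanishes and the factors $1+M_hD_{i,T}$ reduce to $1$. I expect to check this against the precise definitions. If $M_h\neq0$, the terms $M_hD_{i,T}$ are still bounded uniformly in $T$, and the claimed constant requires them to drop out exactly. By hypothesis, $\sigma_{h_t}=\sigma_{\mathrm{Lip}_1}$ for all $t$.

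Next we handle the initial-condition term. Since $X_0$ and $Y_0$ are drawn from different invariant laws, we cannot take $X_0=Y_0$ in distribution. We can, however, choose any coupling with $\mathbb{E}[\|X_0-Y_0\|^2]<\infty$, for instance the independent coupling, since both laws have second moments by Assumption \ref{assump:main_assumptions_1}.\ref{assump:ICS}. Then the second term is $O(1/T)$ and vanishes as $T\to\infty$.

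The remaining term is the KL term,
\[
\frac1T\int_0^T\mathbb{E}\left[\left\|\sigma^{-1}(\tilde X_t)\left(F(\tilde X_t)-G(\tilde X_t)+\kappa(\tilde X_t-Y_t)\right)\right\|^2\right]dt .
\]
Using $\|a+b\|^2\le 2\|a\|^2+2\|b\|^2$ and $\|\sigma^{-1}\|_{2,\infty}$, this is at most
\[
2\|\sigma^{-1}\|_{2,\infty}^2\left(\|F-G\|_\infty^2+\kappa^2\frac1T\int_0^T\mathbb{E}\|\tilde X_t-Y_t\|^2dt\right).
\]
The main obstacle is the long-time control of $\mathbb{E}\|\tilde X_t-Y_t\|^2$ when $X_0\neq Y_0$. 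The uniform bound \eqref{eq:E_Z_2_bound_uniform} was stated for $X_0=Y_0$. I expect the underlying Gronwall estimate to have the form
\[
\mathbb{E}\|\tilde X_t-Y_t\|^2\le e^{-K_{\kappa,\epsilon}t}\,\mathbb{E}\|X_0-Y_0\|^2+\frac{(1+1/\epsilon)\|\sigma-\eta\|_{F,\infty}^2}{K_{\kappa,\epsilon}},
\]
obtained from It\^o's formula, the monotonicity and Lipschitz assumptions, and Young's inequality with parameter $\epsilon$ on the diffusion-difference cross term. The time average of the transient part is $O(1/T)$ and disappears in the limit.

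Taking $\limsup_{T\to\infty}$ then gives
\[
|E_{\mu_Y^*}[h]-E_{\mu_X^*}[h]|\le \frac{(1+1/\epsilon)^{1/2}}{K_{\kappa,\epsilon}^{1/2}}\|\sigma-\eta\|_{F,\infty}+(2C_*)^{1/2}\sigma_{\mathrm{Lip}_1}\|\sigma^{-1}\|_{2,\infty}\left(\|F-G\|_\infty^2+\frac{\kappa^2(1+1/\epsilon)\|\sigma-\eta\|_{F,\infty}^2}{K_{\kappa,\epsilon}}\right)^{1/2},
\]
uniformly over $1$-Lipschitz $h$. Taking the supremum over $h$ yields \eqref{eq:W1_BigO_bound}.
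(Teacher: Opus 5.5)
Your proposal is correct and follows the paper's proof essentially step for step. You apply case \ref{case:time_avg_main_thm_subGauss} of Theorem \ref{thm:bounds_time_avg} to an arbitrary $1$-Lipschitz $h$ with $\sigma_{h_t}=\sigma_{\mathrm{Lip}_1}$, bound the KL integrand by $2\|\sigma^{-1}\|_{2,\infty}^2(\|F-G\|_\infty^2+\kappa^2\mathbb{E}[\|Z_t\|^2])$ using \eqref{eq:E_Z_2_bound_uniform}, let $T\to\infty$, and take the supremum via Kantorovich--Rubinstein duality.

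Two small corrections:
\begin{enumerate}
\item The constant $M_h$ in Assumption \ref{assump:h_assump1} controls the growth in $x$ of the local Lipschitz constant; it has nothing to do with time-variation. A $1$-Lipschitz $h$ therefore satisfies that assumption directly with $L_h=1$, $M_h=0$, and no further check is needed.
\item The bound \eqref{eq:E_Z_2_bound_uniform} is not restricted to $X_0=Y_0$. It already contains the term $\mathbb{E}[\|Z_0\|^2]e^{-K_{\kappa,\epsilon}t}$, which is exactly the estimate you predicted, so the obstacle you anticipated is already handled in the paper.
\end{enumerate}
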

\begin{remark}
That every Lipschitz function is sub-Gaussian under $\mu_X^*$ can be proven using   the   $\log$-Sobolev inequality from Assumption \ref{assump:main_assumptions_2} by  combining the form of the carr{\'e} du champ operator for a  uniformly elliptic SDE together with, e.g.,  Theorem 5.4.1 in \cite{bakry2013analysis}.  In   the case where $F$ is a gradient, $F=-\nabla V$, it can can also be obtained  by more elementary means using the well-known  explicit formula for the invariant distribution together with an appropriate growth rate of growth  of $V$   at infinity.
\end{remark}
\begin{proof}
Let $h:\mathbb{R}^d\to\mathbb{R}$ be a $1$-Lipschitz function.  Then $h$ satisfies Assumption \ref{assump:h_assump1} with $M_h=0$, $L_h\leq 1$.  Applying case \ref{case:time_avg_main_thm_subGauss} of Theorem \ref{thm:bounds_time_avg}  therefore gives
\begin{align}
&\left| \int h d\mu_Y^*-\int   h d\mu_{X}^*\right|\\
\leq&\frac{  (1+1/\epsilon)^{1/2}}{K_{\kappa,\epsilon}^{1/2}} \|\sigma-\eta\|_{F,\infty} +\frac{2(1-e^{-K_{\kappa,\epsilon}T/2})}{K_{\kappa,\epsilon}}   \frac{\mathbb{E}[\|X_0-Y_0\|^{2}]^{1/2}}{T} \notag\\
&+\left(C_*  \sigma_{\mathrm{Lip}_1}^2  \frac{1}{T} \int_0^T\mathbb{E}\left[ \left\|\sigma^{-1}(\tilde{X}_t)\left(F(\tilde{X}_t)-G(\tilde{X}_t)+\kappa(\tilde{X}_t-Y_t)\right)\right\|^2\right]dt\right)^{1/2}\notag
\end{align}
for all $T>0$.  The right-hand side no longer depends on $h$, therefore by maximizing over $1$-Lipschitz functions, using the Kantorovich-Rubinstein formula for $W_1$, the bound \eqref{eq:E_Z_2_bound_uniform}, and taking $T\to\infty$ we  obtain \eqref{eq:W1_BigO_bound}.
\end{proof}

By considering the special case of OU processes, we can show that the linear scaling of  \eqref{eq:W1_BigO_bound} in   $\|F-G\|_\infty$   and in  $\|\sigma-\eta\|_{F,\infty}$ are both optimal.  Consider the  linear SDEs on $\mathbb{R}$,
\begin{align}
dX_t=&-\frac{1}{2}(X_t-\mu_X) dt+\sigma dW_t\,,\,\,\,\,\,\,\,
dY_t=-\frac{1}{2}(Y_t-\mu_Y) dt+\eta dW_t\,,
\end{align}
whose respective invariant distributions are the normal distributions  $\mu_{X}^*=N(\mu_X,\sigma^2)$, $\mu_{Y}^*=N(\mu_Y,\eta^2)$.  Now consider the following two cases.

\begin{enumerate}
\item $\sigma=\eta$:  In this case, using the Kantorovich-Rubinstein formula for $W_1$, we   lower bound the $1$-Wasserstein distance by the difference in expected values of the $1$-Lipschitz function $h(x)=x$, which gives
\begin{align}
W_1(\mu_Y^*,\mu_X^*)\geq  |E_{N(\mu_Y,\sigma^2)}[h]-E_{N(\mu_X,\sigma^2)}[h]|=|\mu_Y-\mu_X|=2\|F-G\|_\infty\,.
\end{align}
Hence the  $O(\|F-G\|_\infty)$ bound in \eqref{eq:W1_BigO_bound} is the optimal rate.

\item $\mu\coloneqq\mu_X=\mu_Y$:   In light of Remark \ref{remark:uniform_dependence}, we should not expect the bounds to hold uniformly in $\sigma$, hence we consider a fixed value of $\sigma$ and will derive a lower bound on the $1$-Wasserstein distance that scales with $|\eta-\sigma|$ as $\eta\to \sigma$. First recall the   formula $ W_1(Q,P)=\int_{-\infty}^\infty|F_Q(t)-F_P(t)|dt$ for the $1$-dimensional $1$-Wasserstein metric  \cite{doi:10.1137/1118101},
where $F_Q$ and $F_P$ denote the cumulative distribution functions of $Q$ and $P$ respectively.  Using this along with Taylor's theorem with integral remainder we can compute
\begin{align}
&W_1(\mu_{Y}^*,\mu_X^*)=\frac{1}{2}\int_{-\infty}^\infty\left|\mathrm{erfc}\left(\frac{t-\mu}{\sqrt{2}\sigma}\right)-\mathrm{erfc}\left(\frac{t-\mu}{\sqrt{2}\eta}\right)\right|dt\\
\geq &  \left(\frac{2}{\pi}\right)^{1/2}  \frac{\eta}{\sigma}|\sigma-\eta| \notag\\
&
-   \frac{1}{(2\pi)^{1/2}} (\sigma^{-1}-\eta^{-1})^2\left( \eta^{-1}+\sigma^{-1}\right)\int_{-\infty}^\infty  \exp\left(- \left(\frac{1}{4\eta^2}+\frac{1}{2}\left(\frac{1}{\sigma}-\frac{1}{\eta}\right)^2\right)u^2\right)
  |u|^3 
du\notag\\
=&\left(\frac{2}{\pi}\right)^{1/2}   |\sigma-\eta| +O( |\sigma-\eta| ^2)\notag
\end{align}
%use-(a+b)^2<=-\frac{1}{2}a^2+b^2
as $\eta\to \sigma$. Hence the  $O(\|\sigma-\eta\|_{F,\infty})$ bound in \eqref{eq:W1_BigO_bound} is the optimal rate.
\end{enumerate}

\subsection{Discounted Observables}
Next we detail the result of our method when applied to discounted observables.  We emphasize that in this case we do not make any assumption regarding invariant initial distributions or even time-homogeneity nor do we require the use of functional inequalities; these  are not necessary when  utilizing a weight measure $\rho(dt)$ whose tail decays sufficiently fast as $t\to\infty$. 
\begin{theorem}\label{thm:bounds_discounted_obs} 
Under Assumptions   \ref{assump:main_assumptions_1}, \ref{assump:h_assump1}, and \ref{assump:sigma_invert}, assume also that $\rho(dt)$ is a finite positive measure on $[0,\infty)$ and $X_0=Y_0$. With   $\tilde{D}$ as defined in \eqref{eq:tilde_D_def}, we have the following.

\begin{enumerate}
\item  If, for all $t\geq 0$, $h_t$ is $\sigma_{h_t}$-sub-Gaussian with respect to $P_{X_t}$ for some $\sigma_{h_t}\in(0,\infty)$ then
\begin{align}
&\sup_{T\geq 0}\left|\mathbb{E}\left[\int_0^T h(t, Y_t)\rho(dt)\right]-\mathbb{E}\left[ \int_0^T h(t,X_t)\rho(dt)\right]\right|\\
&\leq\frac{L_h(1+1/\epsilon)^{1/2}\|\sigma-\eta\|_{F,\infty}}{K_{\kappa,\epsilon}^{1/2}}
\left(\|\rho\|_{\mathrm{TV}}+M_h\tilde{D}\right)\notag\\
&+ \int_0^\infty  \sigma_{h_t}      \left({ \int_0^t\mathbb{E}\left[ \left\|\sigma^{-1}(s,\tilde{X}_s)\left(F(s,\tilde{X}_s)-G(s,\tilde{X}_s)+\kappa(\tilde{X}_s-Y_s)\right)\right\|^2\right]ds} \right)^{1/2}\rho(dt)\notag\,.
\end{align}

If, in addition, $F-G$ is   bounded and $t^{1/2} \sigma_{h_t}  \in L^1([0,\infty),\rho(dt))$  then we  obtain 
\begin{align}\label{eq:discounted_BigO_bound_general_weight_sub_gauss}
&\sup_{T\geq 0}\left|\mathbb{E}\left[\int_0^T h(t, Y_t)\rho(dt)\right]-\mathbb{E}\left[ \int_0^T h(t,X_t)\rho(dt)\right]\right|=O(\|F-G\|_\infty+\|\sigma-\eta\|_{F,\infty})\,.
\end{align} 
\item   If, for all $t\geq 0$, $h_t$ satisfies a $(\sigma_{h_t},b)$-Bernstein MGF bound with respect to $P_{X_t}$  for some $\sigma_{h_t}\in(0,\infty)$ then
\begin{align}
&\sup_{T\geq 0}\left|\mathbb{E}\left[\int_0^T h(t, Y_t)\rho(dt)\right]-\mathbb{E}\left[ \int_0^T h(t,X_t)\rho(dt)\right]\right| \\
\leq&\frac{L_h(1+1/\epsilon)^{1/2}\|\sigma-\eta\|_{F,\infty}}{K_{\kappa,\epsilon}^{1/2}}
\left(\|\rho\|_{\mathrm{TV}}+M_h\tilde{D}\right)\notag\\
&+\int_0^\infty\sigma_{h_t} \left({ \int_0^t\mathbb{E}\left[ \left\|\sigma^{-1}(s,\tilde{X}_s)\left(F(s,\tilde{X}_s)-G(s,\tilde{X}_s)+\kappa(\tilde{X}_s-Y_s)\right)\right\|^2\right]ds}\right)^{1/2}\rho(dt)\notag\\
&+\frac{b}{2}\int_0^\infty \int_0^t\mathbb{E}\left[ \left\|\sigma^{-1}(s,\tilde{X}_s)\left(F(s,\tilde{X}_s)-G(s,\tilde{X}_s)+\kappa(\tilde{X}_s-Y_s)\right)\right\|^2\right]ds\,\rho(dt)\notag\,.
\end{align}

If, in addition, $F-G$ is   bounded and $t^{1/2}\sigma_{h_t} ,t  \in L^1([0,\infty),\rho(dt))$  then we  obtain 
 \begin{align}
&\sup_{T\geq 0}\left|\mathbb{E}\left[\int_0^T h(t, Y_t)\rho(dt)\right]-\mathbb{E}\left[ \int_0^T h(t,X_t)\rho(dt)\right]\right| =O(\|F-G\|_\infty+\|\sigma-\eta\|_{F,\infty})\,.
\end{align}

\end{enumerate}

\end{theorem}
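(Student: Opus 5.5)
We follow the same two-stage scheme as in the proof of Theorem \ref{thm:bounds_time_avg}, but now work pointwise in $t$ and integrate against $\rho(dt)$, so that no invariance, time-homogeneity or functional inequality is needed. First we split
\begin{align*}
\mathbb{E}\Big[\int_0^T h(t,Y_t)\rho(dt)\Big]-\mathbb{E}\Big[\int_0^T h(t,X_t)\rho(dt)\Big]
=&\int_0^T\big(\mathbb{E}[h(t,Y_t)]-\mathbb{E}[h(t,\tilde X_t)]\big)\rho(dt)\\
&+\int_0^T\big(\mathbb{E}[h(t,\tilde X_t)]-\mathbb{E}[h(t,X_t)]\big)\rho(dt),
\end{align*}
using Fubini, which is justified by the moment bounds in Remark \ref{remark:existence_uniqueness} and the growth condition in Assumption \ref{assump:h_assump1}.

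\textbf{Stage 1: the pathwise term.} For the first integral we use the pointwise-in-time version of the stage-1 estimate underlying Lemma \ref{lemma:stage1_result_time_avg}. Since $X_0=Y_0=\tilde X_0$, the initial-condition contribution vanishes. We therefore expect a bound of the form
\begin{align*}
|\mathbb{E}[h(t,\tilde X_t)]-\mathbb{E}[h(t,Y_t)]|\leq \frac{L_h(1+1/\epsilon)^{1/2}}{K_{\kappa,\epsilon}^{1/2}}\|\sigma-\eta\|_{F,\infty}\,(1+M_h\,\Phi(t)),
\end{align*}
where $\Phi(t)$ collects the moment terms coming from the non-Lipschitz part of $h$, estimated via Cauchy--Schwarz together with the moment bounds on $\tilde X_t$ and $Y_t$. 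Integrating against $\rho$ gives $\|\rho\|_{\mathrm{TV}}+M_h\tilde D$, where $\tilde D=\int\Phi\,d\rho$ as in \eqref{eq:tilde_D_def}. The bound is uniform in $T$ because the integrand is dominated by a $\rho$-integrable function.

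\textbf{Stage 2: the UQ term.} For the second integral, at each fixed $t$ we apply Lemma \ref{lemma:UQ_bound_general} to the path observable $\gamma\mapsto h(t,\gamma_t)$ on $[0,t]$ and use the data-processing inequality. In the sub-Gaussian case this gives
\begin{align*}
\pm\big(\mathbb{E}[h(t,\tilde X_t)]-\mathbb{E}[h(t,X_t)]\big)\leq\inf_{c>0}\Big\{\frac{c\,\sigma_{h_t}^2}{2}+\frac{1}{c}\mathrm{KL}(P_{\tilde X|_{[0,t]}}\|P_{X|_{[0,t]}})\Big\}=\sigma_{h_t}\sqrt{2\,\mathrm{KL}}.
\end{align*}
Lemma \ref{lemma:KL_bound} then bounds $2\,\mathrm{KL}$ by $\int_0^t\mathbb{E}\|\sigma^{-1}(F-G+\kappa(\tilde X_s-Y_s))\|^2ds$, which yields exactly the stated integrand. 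In the Bernstein case the MGF bound $\log\mathbb{E}e^{\lambda\hat h}\leq \lambda^2\sigma^2/(2(1-b|\lambda|))$ produces the standard inequality $\inf_c\{\cdot\}\leq \sigma\sqrt{2\mathrm{KL}}+b\,\mathrm{KL}$, which gives the additional $\frac{b}{2}\int\!\int$ term. Integrating over $\rho$ and adding Stage 1 proves both general bounds.

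\textbf{Big-O statements.} If $F-G$ is bounded, we use $\|\sigma^{-1}\|_{2,\infty}$ and the uniform bound \eqref{eq:E_Z_2_bound_uniform},
\begin{align*}
\mathbb{E}\|\tilde X_s-Y_s\|^2\leq (1+1/\epsilon)\|\sigma-\eta\|_{F,\infty}^2/K_{\kappa,\epsilon},
\end{align*}
valid because $X_0=Y_0$. The inner integral is then at most $t\,\|\sigma^{-1}\|^2_{2,\infty}\,O\big((\|F-G\|_\infty+\|\sigma-\eta\|_{F,\infty})^2\big)$. Its square root is $t^{1/2}\,O(\cdot)$, which is $\rho$-integrable against $\sigma_{h_t}$ by hypothesis. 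In the Bernstein case the extra term is $t\,O(\cdot)^2$, integrable since $t\in L^1(\rho)$, and a quadratic term is in particular $O$ of the linear quantity when that quantity is bounded.

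\textbf{Main obstacle.} The delicate step is Stage 1: establishing the pointwise-in-$t$ estimate with constants uniform in $t$, rather than only its time-averaged form, and checking that the $M_h$ moment terms integrate to the finite $\tilde D$. I would extract this from the proof of Lemma \ref{lemma:stage1_result_time_avg}, namely the Itô/Gronwall estimate for $\|\tilde X_t-Y_t\|^2$ with rate $K_{\kappa,\epsilon}$, before it is time-averaged.
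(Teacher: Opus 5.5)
Your proposal is correct and follows essentially the same route as the paper. You route through $\tilde X_t$ and bound the $\tilde X_t$--$Y_t$ term pointwise in $t$ before integrating against $\rho$; this is exactly Lemma \ref{lemma:stage1_result_discount}. Its constants are automatically uniform in $t$ because \eqref{eq:E_Z_2_bound_uniform} and \eqref{eq:E_X_2_bound} are already pointwise-in-time estimates, so your ``main obstacle'' is not actually delicate. You then apply Lemma \ref{lemma:UQ_bound_general} to $\gamma\mapsto h(t,\gamma_t)$ for each $t$, insert Lemma \ref{lemma:KL_bound}, and integrate over $\rho$, just as the paper does. Your explicit evaluation of the infima ($\sigma_{h_t}\sqrt{2\,\mathrm{KL}}$, respectively $\sigma_{h_t}\sqrt{2\,\mathrm{KL}}+b\,\mathrm{KL}$) and your big-$O$ step via \eqref{eq:E_Z_2_bound_uniform} supply precisely the details the paper omits.
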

\begin{remark}
In particular, under appropriate assumptions, applying this result to the exponentially discounted weight $\rho(dt)=e^{-rt}dt$, $r>0$,  yields  \eqref{eq:discounted_cost_bounds_preview}.
\end{remark}
\begin{proof}
First use Lemma \ref{lemma:stage1_result_discount}   to compute
\begin{align}\label{eq:discounted_observable_X_tilde_Y_temp}
&\sup_{T\geq 0}\left|\mathbb{E}\left[\int_0^T h(t,\tilde X_t)\rho(dt)\right]-\mathbb{E}\left[ \int_0^T h(t,Y_t)\rho(dt)\right]\right| \\
\leq&\frac{L_h(1+1/\epsilon)^{1/2}\|\sigma-\eta\|_{F,\infty}}{K_{\kappa,\epsilon}^{1/2}}
\left(\|\rho\|_{\mathrm{TV}}+M_h\tilde{D}\right)\notag\,,
\end{align}
where $\|\cdot\|_{\mathrm{TV}}$ denotes the total variation.
Next apply   Lemma \ref{lemma:UQ_bound_general} to  the observable $g(\gamma)=h(t,\gamma_t)$ for each $t$ and then integrate with respect to $\rho(dt)$ and maximize over $T\geq0$ to obtain  
\begin{align}\label{eq:UQ_bound_X_tilde_X_discounted}
&\sup_{T\geq 0}\left\{\pm \left(\int_0^T\mathbb{E}[h(t,\tilde{X}_t)] \rho(dt)-\int_0^T\mathbb{E}[h(t,X_t)] \rho(dt)\right) \right\}\\
\leq& \int_0^\infty \inf_{c>0}\left\{\frac{1}{c}\log E_{{X_t}}\left[\exp(\pm c(h_t-E_{{X_t}}[h_t]))\right]+\frac{1}{c}\mathrm{KL}\left(P_{\tilde{X}|_{[0,t]}}\|P_{{X}|_{[0,t]}}\right)\right\} \rho(dt)\,,\notag
\end{align}
where $E_{X_t}$ denotes the expectation with respect to the distribution of ${X_t}$;   Lemma \ref {lemma:KL_bound}
implies the KL-divergence bound
\begin{align}\label{eq:KL_bound_discounted_proof}
\mathrm{KL}\left(P_{\tilde{X}|_{[0,t]}}\|P_{{X}|_{[0,t]}}\right)\leq& \frac{1}{2}\int_0^t\mathbb{E}\left[ \left\|\sigma^{-1}(s,\tilde{X}_s)\left(F(s,\tilde{X}_s)-G(s,\tilde{X}_s)+\kappa(\tilde{X}_s-Y_s)\right)\right\|^2\right]ds
\end{align}
 for all $t>0$.   In both the sub-Gaussian and Bernstein cases, the remainder of the proof closely follows the corresponding steps in the proof of Theorem \ref{thm:bounds_time_avg}, hence we omit the details.
\end{proof}

\subsection{Sensitivity Bounds for Linear Parabolic PDEs}

As our final application, we derive perturbation bounds for classical solutions to linear parabolic PDEs of the form \eqref{eq:Feynman_Kac_pde_intro}
 by applying our method to the Feynman-Kac formula; in particular, the following result implies \eqref{eq:PDE_bounds_preview}. We focus on perturbations to the drift, $F$, and especially the diffusion, $\sigma$, as perturbations to the other terms in the PDE  are relatively trivial to handle since they do not change the underlying SDE. 
\begin{theorem}\label{thm:parabolic_PDE}
In addition to Assumptions \ref{assump:main_assumptions_1}, \ref{assump:PDE}, \ref{assump:main_assumptions_2}, and \ref{assump:sigma_invert},  assume the following:
\begin{enumerate}
\item Let $T>0$, $u_T(t,x)$ be a $C^{1,2}$ solution to \eqref{eq:Feynman_Kac_pde_intro}, and $v_T(t,x)$ be a $C^{1,2}$ solution to \eqref{eq:Feynman_Kac_pde_intro} but with $F$ replaced by $G$ and $\sigma$ replaced by $\eta$. 
\item   Suppose $u_T(t,x)$ and $v_T(t,x)$ are both polynomially-bounded in $x$, uniformly in $t\in[0,T]$.
\item   Suppose $f$ is $\sigma_f$-sub-Gaussian with respect to $\mu_X^*$.
\item Suppose that for all $t\in[0,T]$, $h_t$ is $\sigma_{h_t}$-sub-Gaussian with respect to $\mu_{X}^*$ for some $\sigma_{h_t}\in(0,\infty)$ and $\sigma_{h_t}\in L^2([0,T],dt)$.
\end{enumerate}

Then for all $t\in[0,T]$  we have
 \begin{align}\label{eq:PDE_result}
&\|v_T(t,\cdot)-u_T(t,\cdot)\|_{L^1(\mu_X^*)}\\
\leq& (  (T-t) L_h + L_f)\frac{(1+1/\epsilon)^{1/2}\|\sigma-\eta\|_{F,\infty}}{K_{\kappa,\epsilon}^{1/2}}+2 \left(\sigma_f^2   + C_* \int_0^{T-t}\sigma^2_{h_{t+r}} dr\right)^{1/2}\notag \\
&\qquad\times\left(\int_t^{T} \int\mathbb{E}\left[ \left\|\sigma^{-1}(\tilde{X}^{t,x}_{s})\left(F(\tilde{X}^{t,x}_{s})-G(s,\tilde{X}^{t,x}_{s})+\kappa(\tilde{X}^{t,x}_{s}-Y^{t,x}_{s})\right)\right\|^2\right]\mu_X^*(dx) \,ds\right)^{1/2}\notag.
\end{align}

If, in addition, $F-G$ and $\sigma_{h_t}$ are bounded then
\begin{align}\label{eq:PDE_result_bounded_case}
&\|v_T(t,\cdot)-u_T(t,\cdot)\|_{L^1(\mu_X^*)}\leq (  (T-t) L_h + L_f)\frac{(1+1/\epsilon)^{1/2}\|\sigma-\eta\|_{F,\infty}}{K_{\kappa,\epsilon}^{1/2}}\\
&+2\sqrt{2}  \|\sigma^{-1}\|_{2,\infty} (T-t)^{1/2} \left(C_*(T-t) (\sup_s\sigma_{h_{s}})^2+\sigma_f^2   \right)^{1/2}\notag \\
&\qquad\qquad\qquad\qquad\times\left(\|F-G\|_\infty^2+\kappa^2    \frac{(1+1/\epsilon)\|\sigma-\eta\|_{F,\infty}^2}{K_{\kappa,\epsilon}}  \right)^{1/2}\notag\\
=&O(\|F-G\|_\infty+\|\sigma-\eta\|_{F,\infty})\,.\notag
\end{align}

\end{theorem}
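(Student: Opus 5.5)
Fix $t\in[0,T]$ and $x$, and write both solutions through the Feynman--Kac formula. The polynomial growth assumption, together with the moment bounds of Remark \ref{remark:existence_uniqueness}, justifies this step. We get
\begin{align*}
u_T(t,x)=\mathbb{E}\left[f(X^{t,x}_T)+\int_t^T h(s,X^{t,x}_s)ds\right],\qquad v_T(t,x)=\mathbb{E}\left[f(Y^{t,x}_T)+\int_t^T h(s,Y^{t,x}_s)ds\right].
\end{align*}
Here $X^{t,x},Y^{t,x}$ start at $x$ at time $t$. We also introduce the auxiliary processes $\tilde X^{t,x},\tilde Y^{t,x}$ of \eqref{eq:tilde_Y}--\eqref{eq:tilde_X}, with the same initial condition. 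We then split
\[
v_T-u_T=(v_T-\tilde u_T)+(\tilde u_T-u_T),
\]
where $\tilde u_T(t,x)$ is the same Feynman--Kac expectation with $\tilde X^{t,x}$ in place of $X^{t,x}$.

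\textbf{Pathwise term $v_T-\tilde u_T$.} Since $\tilde X_t=Y_t=x$, the stage-1 estimate (the one behind Lemmas \ref{lemma:stage1_result_time_avg} and \ref{lemma:stage1_result_discount}) gives
\[
\sup_s\mathbb{E}\|\tilde X^{t,x}_s-Y^{t,x}_s\|^2\le (1+1/\epsilon)\|\sigma-\eta\|_{F,\infty}^2/K_{\kappa,\epsilon}.
\]
Using the Lipschitz constants $L_f$ and $L_h$ from Assumption \ref{assump:PDE} and Jensen, this yields $((T-t)L_h+L_f)(1+1/\epsilon)^{1/2}\|\sigma-\eta\|_{F,\infty}/K_{\kappa,\epsilon}^{1/2}$ uniformly in $x$. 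Integrating against $\mu_X^*$ gives the first term of \eqref{eq:PDE_result}.

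\textbf{Information-theoretic term $\tilde u_T-u_T$.} Apply Lemma \ref{lemma:UQ_bound_general} to the path observable
\[
g(\gamma)=f(\gamma_T)+\int_t^T h(s,\gamma_s)ds
\]
on $[t,T]$, with the KL term bounded by Lemma \ref{lemma:KL_bound}, namely $\frac12\int_t^T\mathbb{E}\|\sigma^{-1}(\cdots)\|^2ds$. For fixed $x$ the centering is $u_T(t,x)$ itself, and the MGF is taken under $X^{t,x}$, which is not stationary. To handle this, I would not take the infimum over $c$ pointwise. Instead, I would first integrate in $x$ against $\mu_X^*$ with a common $c$. By Jensen or Hölder, $\int\log\mathbb{E}[e^{\pm c(g-u_T(t,x))}]\mu_X^*(dx)$ is controlled by $\log$ of the MGF of $g(X)-\mathbb{E}_{\mu_X^*}[g]$ under the stationary process plus the same for $-u_T(t,\cdot)$, which accounts for the factor $2$. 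Then:
\begin{enumerate}
\item For the terminal part, use that $f$ is $\sigma_f$-sub-Gaussian under $\mu_X^*$, and $X_T\sim\mu_X^*$ by stationarity.
\item For the integral part, use the time-dependent Feynman--Kac/log-Sobolev MGF bound (Lemma \ref{lemma:MGF_bound_time_avg_subGauss}), which gives $C_*\int_0^{T-t}\sigma^2_{h_{t+r}}dr$.
\item Combine the two parts by Cauchy--Schwarz on the exponents; the variance proxies add as $\sigma_f^2+C_*\int\sigma_h^2$.
\end{enumerate}
Optimizing $c$ of the form $\frac{c}{2}V+\frac1c\mathrm{KL}$ produces $(2V\cdot\mathrm{KL})^{1/2}$. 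Substituting the integrated KL bound gives the second term of \eqref{eq:PDE_result}. For $L^1$ rather than the signed difference, I would apply the bound with both signs on the sets $\{v_T\gtrless u_T\}$, or equivalently bound $\int|\cdot|$ by the sum of the two one-sided controls. This is a second possible source of the factor $2$.

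\textbf{Bounded case.} Bound $\|\sigma^{-1}(F-G+\kappa(\tilde X-Y))\|^2\le 2\|\sigma^{-1}\|_{2,\infty}^2(\|F-G\|_\infty^2+\kappa^2\|\tilde X-Y\|^2)$. Using the uniform moment bound \eqref{eq:E_Z_2_bound_uniform} again, the $ds$-integral is at most $(T-t)$ times this. This yields \eqref{eq:PDE_result_bounded_case}.

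\textbf{Main obstacle.} The hard step is the MGF/centering: the expectation is centered at $u_T(t,x)$ pointwise, while the concentration inputs are stated for the stationary law. I expect to handle it by averaging in $x$ before optimizing $c$, and by using that $u_T(t,\cdot)$ is itself a stationary-averaged quantity with controlled fluctuation.
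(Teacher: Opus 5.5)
Your proposal follows the paper's proof essentially step for step. It uses the Feynman--Kac representations and the split through $\tilde u_T$. The pathwise estimate for $v_T-\tilde u_T$ is exactly Lemma~\ref{lemma:Feynman-Kac_X_tilde_Y_bound}. Lemma~\ref{lemma:UQ_bound_general} is applied for each $x$ and then integrated against $\mu_X^*$ with a common $c$. Cauchy--Schwarz separates the $f$- and $h$-parts, and stationarity is combined with Lemma~\ref{lemma:MGF_bound_time_avg_subGauss} and Lemma~\ref{lemma:KL_bound}. Finally, \eqref{eq:E_Z_2_bound_uniform} handles the bounded case. Two parts of your account of the MGF step are off, however, and should be corrected.

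\textbf{The centering.} The centering is not an obstacle, and it needs no control of the fluctuations of $u_T(t,\cdot)$. A H\"older split producing a separate MGF for $-u_T(t,\cdot)$ would only inflate the constant. For fixed $x$ the centering is a number, so pull it out of the logarithm before using Jensen: $\log\mathbb{E}[e^{\ell c(g(X^{t,x})-u_T(t,x))}]=\log\mathbb{E}[e^{\ell c\, g(X^{t,x})}]-\ell c\, u_T(t,x)$. Concavity of $\log$ then gives $\int\log\mathbb{E}[e^{\ell c\, g(X^{t,x})}]\mu_X^*(dx)\le\log\int\mathbb{E}[e^{\ell c\, g(X^{t,x})}]\mu_X^*(dx)$. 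By the Markov property and invariance of $\mu_X^*$, both this integral and $\int u_T(t,x)\mu_X^*(dx)$ equal the corresponding quantities for the stationary process. So the $\mu_X^*$-average of the log-MGF is bounded exactly by the stationary centered log-MGF, with nothing left over. The paper does this separately for the $f$- and $h$-parts after Cauchy--Schwarz.

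\textbf{The constants.} After Cauchy--Schwarz the log-MGF is at most $c^2V$, with $V=\sigma_f^2+C_*\int_0^{T-t}\sigma_{h_{t+r}}^2dr$, not $\frac{c^2}{2}V$. The factor $2$ in \eqref{eq:PDE_result} comes from adding the two one-sided bounds before optimizing; this is allowed because both log-MGF terms are nonnegative. That gives $\int|\tilde u_T-u_T|\,d\mu_X^*\le 2cV+\frac1c\int\mathrm{KL}\,d\mu_X^*$. Its infimum over $c$ is $2\sqrt{2V\int\mathrm{KL}\,d\mu_X^*}$, and Lemma~\ref{lemma:KL_bound} turns this into exactly the second term of \eqref{eq:PDE_result}.

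In your version, the dropped Cauchy--Schwarz factor and the doubling applied after optimization happen to cancel, which is why you still land on the stated constant. With the correct $c^2V$ and the doubling done only after optimizing each sign (with the KL integrated over all of $\mathbb{R}^d$), you would get $2\sqrt2$ instead of $2$. Either combine the two signs before optimizing $c$, as the paper does, or restrict the KL integral to each of the two sets.
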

\begin{remark}
When $h\neq 0$, the scaling of the bound \eqref{eq:PDE_result_bounded_case} with $T-t$ cannot be improved in general, as the error due to the integral terms in \eqref{eq:Feynman_Kac_X} will often scale with $T-t$.  However, when $L_h=0$ there is a spurious $(T-t)^{1/2}$ dependence in our result, which arises from the linear scaling of the relative entropy bound in $T-t$. Whether this can be addressed by an improved argument is a question we leave for future work.  We also note that the argument below can be modified to produce pointwise bounds; however, it is the integration with respect to $\mu_X^*$ and the accompanying use of  Lemma \ref{lemma:MGF_bound_time_avg_subGauss} which leads to the optimal $O(T-t)$ scaling.
\end{remark}
\begin{proof}
For $x\in\mathbb{R}^d$, $t\in[0,T]$, let $\{X^{t,x}_s\}_{s\geq t}$ be the solution to \eqref{eq:X} started at position $x$ at time $t$ and let $\{(\tilde{X}^{t,x}_s,Y^{t,x}_s)\}_{s\geq t}$ be the solution to  \eqref{eq:tilde_X}\,-\,\eqref{eq:Y} started at position $(x,x)$ at time $t\geq 0$.  The Feynman-Kac formula implies the PDE solutions $u_T$ and $v_T$ can be written in terms of the SDE solutions as follows:
\begin{align}
{u}_T(t,x)\coloneqq&\mathbb{E}\left[f({X}^{t,x}_T) +\int_t^T h(s,{X}_s^{t,x})  ds\right]\,, \,\,\,\,\,\,v_T(t,x)\coloneqq\mathbb{E}\left[f(Y^{t,x}_T)  +\int_t^T h(s,Y_s^{t,x})   ds\right]\,. \label{eq:Feynman_Kac_X}
\end{align}
  See, e.g.,  Theorem 7.6 in Chapter 5 of  \cite{karatzas2014brownian} for a proof of these representations.

Now define the analogous quantity in terms of $\tilde{X}^{t,x}$,
\begin{align}
\tilde{u}_T(t,x)\coloneqq&\mathbb{E}\left[f(\tilde{X}^{t,x}_T) +\int_t^T h(s,\tilde{X}_s^{t,x})  ds\right]\,,
\end{align}
and define the observable $g:C([t,T],\mathbb{R}^d)\to\mathbb{R}$ by $g(\gamma)\coloneqq f(\gamma_T)  +\int_t^T h(s,\gamma_s)  ds$. 
Lemma \ref{lemma:UQ_bound_general} applied to $g$ gives
\begin{align}\label{eq:UQ_bound_PDE_u_tilde_u}
&\pm\left(\tilde{u}_T(t,x)-u_T(t,x)\right)\\
\leq &\inf_{c>0}\left\{\frac{1}{c}\log E_{{{X}^{t,x}|_{[t,T]}}}\left[\exp(\pm c(g-E_{{{X}^{t,x}|_{[t,T]}}}[g]))\right]+\frac{1}{c}\mathrm{KL}(P_{\tilde{X}^{t,x}|_{[t,T]}}\|P_{{X}^{t,x}|_{[t,T]}})\right\}\,.\notag
\end{align}
Using non-negativity of the right-hand side we can compute
\begin{align}\label{eq:tilde_u_u_L1_bound1}
&\int |\tilde{u}_T(t,x)-u_T(t,x)|\mu_X^*(dx)\\
\leq &\inf_{c>0}\left\{ \frac{2}{c}\max_{\ell\in\{\pm 1\}} \int \! \log E_{{{X}^{t,x}|_{[t,T]}}}\!\!\left[\exp(  \ell c(g-E_{{{X}^{t,x}|_{[t,T]}}}[g]))\right]\mu_X^*(dx) \right.\notag\\
&\left.\qquad\qquad+\frac{1}{c}\int \mathrm{KL}(P_{\tilde{X}^{t,x}|_{[t,T]}}\|P_{{X}^{t,x}|_{[t,T]}})\mu_X^*(dx)\right\}.\notag
\end{align}

For $c>0$ and $\ell\in\{\pm 1\}$, we bound the MGF term using the Cauchy-Schwarz inequality, the   Markov property,  Jensen's inequality, and Assumption \ref{assump:main_assumptions_2}\,.\,\ref{assump:inv_dist}, which yields
\begin{align}
  &  \frac{2}{c}\int \log E_{{{X}^{t,x}|_{[t,T]}}}\!\!\left[\exp(  \ell c(g-E_{{{X}^{t,x}|_{[t,T]}}}[g]))\right]\mu_X^*(dx)  \\
\leq  &   \frac{1}{c}\int \log E_{{{X}^{t,x}_T}}\left[\exp( 2 \ell c(f-E_{{{X}^{t,x}_T}}[f]))\right]\mu_X^*(dx)\notag\\
& + \frac{1}{c}\int \log \mathbb{E}\left[\exp\left( 2 \ell c\int_t^T(h(s,X^{t,x}_s) -{E}_{X^{t,x}_s}[h_s])ds\right)\right]\mu_X^*(dx) \notag
\end{align}
\begin{align}
\leq& \frac{1}{c}\log \int  E_{{{X}^{t,x}_T}}\left[\exp( 2 \ell cf)\right]\mu_X^*(dx)-2 \ell     E_{\mu_X^*}[f]\notag\\
& + \frac{1}{c}\log \int  \mathbb{E}\left[\exp\left( 2 \ell c\int_t^Th(s,X^{t,x}_s) ds\right)\right]\mu_X^*(dx) -2 \ell    \int_t^T{E}_{\mu_X^*}[h_s]ds \notag\\
=& \frac{1}{c}\log \int \exp\left( 2 \ell c(f -   E_{\mu_X^*}[f])\right)\mu_X^*(dx)  \notag\\
&+ \frac{1}{c}\log    \mathbb{E}\left[\exp\left( 2 \ell c\int_0^{T-t}\left(h_{t+r}(X_{r}) -{E}_{\mu_X^*}[h_{t+r}]\right)dr\right)\right]    \notag\,.
\end{align}
Now use the sub-Gaussianity of $f$ along with Lemma \ref{lemma:MGF_bound_time_avg_subGauss} (note that $(r,x)\mapsto h(t+r,x)$ satisfies Assumption \ref{assump:h_assump1}) to obtain
\begin{align}
  &  \frac{2}{c}\int \log E_{{{X}^{t,x}|_{[t,T]}}}\left[\exp(  \ell c(g-E_{{{X}^{t,x}|_{[t,T]}}}[g]))\right]\mu_X^*(dx)  
\leq 2   c\left(\sigma_f^2   + C_* \int_0^{T-t}\sigma^2_{h_{t+r}} dr\right)\,.
\end{align}
Using this to bound \eqref{eq:tilde_u_u_L1_bound1} and  then evaluating the infimum over $c$ we find
\begin{align}
&\int |\tilde{u}_T(t,x)-u_T(t,x)|\mu_X^*(dx)\\
\leq & 2\sqrt{2} \left(\sigma_f^2   + C_* \int_0^{T-t}\sigma^2_{h_{t+r}} dr\right)^{1/2} \left(\int \mathrm{KL}(P_{\tilde{X}^{t,x}|_{[t,T]}}\|P_{{X}^{t,x}|_{[t,T]}})\mu_X^*(dx)\right)^{1/2}\notag\,.
\end{align}
Combining this with  Lemmas \ref{lemma:Feynman-Kac_X_tilde_Y_bound} and  \ref{lemma:KL_bound}  and using the triangle inequality we obtain  the claimed result \eqref{eq:PDE_result}. If, in addition, $F-G$ and $\sigma_{h_t}$ are bounded then we can use \eqref{eq:E_Z_2_bound_uniform} to further compute 
\eqref{eq:PDE_result_bounded_case}.
\end{proof}

We note that a large portion of the above argument can also be applied to 
\begin{align}
{u}(t,x)\coloneqq&\mathbb{E}\left[f({X}^{t,x}_T) \exp\left(\int_t^T k(s,{X}_s^{t,x}) ds\right)+\int_t^T h(s,{X}_s^{t,x}) \exp\left(\int_t^s k(r,{X}^{t,x}_r)dr\right) ds\right]\,, 
\end{align}
which, under appropriate assumptions, solves the PDE
\begin{align}
\partial_t u(t,x)=-A[u_t](t,x)-k(t,x)u(t,x)-h(t,x)\,,\,\,\,u(T,x)=f(x)\,.
\end{align}
However, effectively handling the corresponding MGF so that the bounds scale optimally in $T-t$ requires techniques beyond those discussed in Section \ref{sec:functional_ineq}.

\section{Proofs of Key Estimates}\label{sec:proofs}
In this section we provide detailed proofs of the bounds that underpin the applications in Section \ref{sec:applications} and which were outlined in Section \ref{sec:proof_outline}.

\subsection{Pathwise Comparison of $\tilde X_t$ and $Y_t$}\label{sec:Z_2n_bound} 
 In the first stage of the proof, we show that the coupling term between $\tilde{X}_t$ and $Y_t$ is sufficient to keep them close to one another, with error that depends on the difference in diffusion.  Working under Assumption \ref{assump:main_assumptions_1},  for $K\in\mathbb{R}$, $n\in\mathbb{Z}^+$, an application of It{\^o}'s formula together with Assumption \ref{assump:main_assumptions_1}\,.\,\ref{assump:G_decomp}  applied to $Z_t\coloneqq\tilde X_t-Y_t$ yields
\begin{align}\label{eq:Z_2n_Ito}
e^{Kt}\|Z_t\|^{2n}=&\|Z_0\|^{2n}+\int_0^t Ke^{Ks}\|Z_s\|^{2n}ds\\
&+\int_0^t2n e^{Ks} \|Z_s\|^{2(n-1)} (Z_s\cdot(G(s,\tilde X_s)-G(s,Y_s)) -\kappa \|Z_s\|^2)ds\notag\\
&+\int_0^t 2ne^{Ks} \|Z_s\|^{2(n-1)}Z_s\cdot (\sigma(s,\tilde X_s)-\eta(s,Y_s) )dW_s\notag\\
&+\int_0^t  2n(n-1)e^{Ks}\|Z_s\|^{2(n-2)} \|(\sigma(s,\tilde X_s)-\eta(s,Y_s))^T Z_s\|^2ds\notag\\
&+\int_0^t ne^{Ks}\|Z_s\|^{2(n-1)}\|\sigma(s,\tilde X_s)-\eta(s,Y_s)\|_F^2ds\notag\\
\leq&\|Z_0\|^{2n}- \left( 2n(\kappa+C_G-L_{\tilde{G}}  )-K\right)\int_0^t e^{Ks} \|Z_s\|^{2n}ds\notag\\
&+\int_0^t 2ne^{Ks} \|Z_s\|^{2(n-1)}Z_s\cdot (\sigma(s,\tilde X_s)-\eta(s,Y_s) )dW_s\notag\\
&+\int_0^t  2n(n-1)e^{Ks}\|Z_s\|^{2(n-2)} \|(\sigma(s,\tilde X_s)-\eta(s,Y_s))^T Z_s\|^2ds\notag\\
&+\int_0^t ne^{Ks}\|Z_s\|^{2(n-1)}\|\sigma(s,\tilde X_s)-\eta(s,Y_s)\|_F^2ds\notag\,.
\end{align}
 Based on   Remark \ref{remark:existence_uniqueness}, we have $    \mathbb{E}\left[\int_0^t (2ne^{Ks} \|Z_s\|^{2(n-1)} \|(\sigma(s,\tilde X_s)-\eta(s,Y_s) )^TZ_s\|)^2 ds\right]<\infty $
for all $t$ and hence  $\int_0^t 2ne^{Ks} \|Z_s\|^{2(n-1)}Z_s\cdot (\sigma(s,\tilde X_s)-\eta(s,Y_s) )dW_s$ is a martingale; see, e.g., Chapter 3 in  \cite{karatzas2014brownian}.  In particular, it has expected value  zero.  

In the $n=1$ case, taking the expectation and using the martingale property, we   obtain
\begin{align}
e^{Kt}\mathbb{E}[\|Z_t\|^{2}]\leq &
\mathbb{E}[\|Z_0\|^{2}]- \left( 2(\kappa+C_G-L_{\tilde{G}}  )-K\right)\int_0^t e^{Ks} \mathbb{E}[\|Z_s\|^{2}]ds\\
&+\int_0^t  e^{Ks} \mathbb{E}\left[\|\sigma(s,\tilde X_s)-\eta(s,Y_s)\|_F^2\right]ds\notag\\
\leq & \mathbb{E}[\|Z_0\|^{2}]- \left( 2(\kappa+C_G-L_{\tilde{G}} - (1+\epsilon)L_{\sigma}^2/2 )-K\right) \int_0^te^{Ks} \mathbb{E}[\|Z_s\|^2]ds\notag\\
&+(1+1/\epsilon) \int_0^te^{Ks} \mathbb{E}\left[\|\sigma(s,Y_s)-\eta(s,Y_s)\|_F^2\right]ds\notag 
\end{align}
for all $\epsilon>0$.  With $\epsilon,\kappa$ chosen according to Assumption \ref{assump:main_assumptions_1}\,.\,\ref{assump:epsilon_lambda} and letting $K=K_{\kappa,\epsilon}$   we have
\begin{align}\label{eq:E_Z_2_bound}
\mathbb{E}[\|Z_t\|^{2}]\leq&\mathbb{E}[\|Z_0\|^{2}]e^{-K_{\kappa,\epsilon}t}
+(1+1/\epsilon)\mathbb{E}\left[\int_0^te^{-K_{\kappa,\epsilon}(t-s)}\|\sigma(s,Y_s)-\eta(s,Y_s)\|_F^2ds\right]\,.
\end{align}
  In particular, we have the following bound in terms of the uniform norm of $\sigma-\eta$:
\begin{align}\label{eq:E_Z_2_bound_uniform}
\mathbb{E}[\|Z_t\|^{2}]\leq&\mathbb{E}[\|Z_0\|^{2}]e^{-K_{\kappa,\epsilon}t}+\frac{(1+1/\epsilon)\|\sigma-\eta\|_{F,\infty}^2}{K_{\kappa,\epsilon}}(1-e^{-K_{\kappa,\epsilon}t})\,.
\end{align}
\begin{remark}\label{remark:alternative_E_Z_2_Bounds}
One can alternatively utilize \eqref{eq:E_Z_2_bound} in conjunction with   bounds on $Y_t$ (derived via similar techniques to the above)   to obtain bounds that leverage $(t,x)$-dependent bounds on $\sigma-\eta$, though we will not pursue that direction here.   Similar remarks apply to the $\sigma$ and $\eta$ dependent terms elsewhere in this work.
\end{remark}

While the $n=1$ case \eqref{eq:E_Z_2_bound_uniform}   explicitly enters into our sensitivity bounds,   to justify the use of Girsanov's theorem in Section \ref{sec:Girsanov_KL} via a Novikov-type condition we will also require  sufficient control on   $\mathbb{E}[\|Z_t\|^{2n}]$   for all $n\in\mathbb{Z}^+$.   To that end,  we take the expected value of  \eqref{eq:Z_2n_Ito}  and use the martingale property to compute
\begin{align} 
&e^{Kt}\mathbb{E}[\|Z_t\|^{2n}]\leq\mathbb{E}[\|Z_0\|^{2n}]- \left( 2n(\kappa+C_G-L_{\tilde{G}}  )-K\right)\int_0^t e^{Ks} \mathbb{E}[\|Z_s\|^{2n}]ds\\
&+\int_0^t  2n(n-1)e^{Ks}\mathbb{E}[\|Z_s\|^{2(n-2)} \|(\sigma(s,\tilde X_s)-\eta(s,Y_s))^T Z_s\|^2]ds\notag\\
&+\int_0^t ne^{Ks}\mathbb{E}[\|Z_s\|^{2(n-1)}\|\sigma(s,\tilde X_s)-\eta(s,Y_s)\|_F^2]ds\notag\\
\leq &\mathbb{E}[\|Z_0\|^{2n}]-(2n(\kappa+C_G -L_{\tilde{G}}) -K)\int_0^t e^{Ks} \mathbb{E}[\|Z_s\|^{2n}]ds\notag\\
&+n^2(2\sup_{s,x,y}\|\sigma(s,x)-\eta(s,y)\|_2^2 +\sup_{s,x,y}\|\sigma(s,x)-\eta(s,y)\|_F^2)\!\int_0^t e^{Ks}\mathbb{E}[\|Z_s\|^{2(n-1)}]ds\,.\notag
\end{align}
Note that here    a much rougher bound on the diffusion terms will suffice. Letting $K= 2n(\kappa+C_G-L_{\tilde{G}})$ we obtain  $f_n(t)\leq b_n+a_n\int_0^t  e^{\tau s}f_{n-1}(s)ds$ 
for all $n\in\mathbb{Z}^+$, $t\geq0$, where
\begin{align}
&f_n(t)\coloneqq e^{n\tau t}\mathbb{E}[\|Z_t\|^{2n}]\,,\,\,\,\,\tau\coloneqq 2(\kappa+C_G-L_{\tilde{G}})\,,\,\,\,\,b_n\coloneqq\mathbb{E}[\|Z_0\|^{2n}]\,,\\
&a_n\coloneqq n^2(2\sup_{s,x,y}\|\sigma(s,x)-\eta(s,y)\|_2^2 +\sup_{s,x,y}\|\sigma(s,x)-\eta(s,y)\|_F^2)\coloneqq n^2 D_{\sigma,\eta}\,.
\end{align}
 Functions that satisfy such a recursive sequence of inequalities can be bounded  as follows.
\begin{lemma}\label{lemma:f_n_bound}
    Suppose we have $a_n,b_n\in[0,\infty)$, $T,\tau\in(0,\infty)$, and measurable $f_n:[0,T]\to[0,\infty)$, $n\in\mathbb{Z}^+$ that satisfy
    \begin{align}
        f_n(t)\leq   b_n+a_n\int_0^t e^{\tau s}f_{n-1}(s)ds
    \end{align}
    for all $n\in\mathbb{Z}^+$, $t\in[0,T]$, where $f_0\coloneqq 1$.  Then for all $t\in[0,T]$, and defining $b_0\coloneqq 1$, we have
\begin{align}\label{eq:f_n_bound}
    f_n(t)\leq \sum_{j=0}^n \frac{\prod_{k=j+1}^n a_k}{(n-j)! \tau^{n-j}} b_j  e^{(n-j)\tau t} \,.
\end{align}
\end{lemma}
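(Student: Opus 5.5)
The plan is induction on $n$. The base case $n=0$ holds because $f_0=1=b_0$, and the sum in \eqref{eq:f_n_bound} with $n=0$ reduces to the single term $b_0=1$ (the empty product equals $1$). For $n=1$ the hypothesis gives $f_1(t)\leq b_1+a_1\int_0^t e^{\tau s}ds=b_1+\frac{a_1}{\tau}(e^{\tau t}-1)\leq b_1+\frac{a_1}{\tau}b_0e^{\tau t}$, which matches the claim. This already shows the one mechanism needed: integrate an exponential and drop the negative constant that appears.

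For the inductive step, assume \eqref{eq:f_n_bound} holds for $n-1$. Substitute it into the recursive inequality:
\begin{align*}
f_n(t)\leq b_n+a_n\sum_{j=0}^{n-1}\frac{\prod_{k=j+1}^{n-1}a_k}{(n-1-j)!\,\tau^{n-1-j}}\,b_j\int_0^t e^{\tau s}e^{(n-1-j)\tau s}ds .
\end{align*}
The integrand is $e^{(n-j)\tau s}$, so the integral equals $\frac{e^{(n-j)\tau t}-1}{(n-j)\tau}$, which is at most $\frac{e^{(n-j)\tau t}}{(n-j)\tau}$. This step uses nonnegativity of $a_k$, $b_j$ and $\tau>0$. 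Now multiply through:
\begin{itemize}
\item $a_n\prod_{k=j+1}^{n-1}a_k$ becomes $\prod_{k=j+1}^{n}a_k$;
\item $(n-1-j)!\,(n-j)$ becomes $(n-j)!$;
\item the powers $\tau^{n-1-j}\cdot\tau$ become $\tau^{n-j}$.
\end{itemize}
This gives exactly the terms $j=0,\dots,n-1$ of \eqref{eq:f_n_bound}. The leftover $b_n$ is the $j=n$ term, since $\prod_{k=n+1}^n a_k=1$, $0!=1$ and $e^{0}=1$. That closes the induction.

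Nothing here is a serious obstacle. The only points to check are measurability and finiteness: the integrals make sense because the $f_{n-1}$ are nonnegative and measurable. Even if some $f_{n-1}$ were not known a priori to be integrable, the inductive bound makes it bounded on $[0,T]$, so the substitution is legitimate. The remaining work is the index bookkeeping that merges the factorials and the products of the $a_k$, done as in the three points above.
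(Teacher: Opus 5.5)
Your proof is correct and follows the same argument as the paper. Both argue by induction on $n$: substitute the inductive bound into the recursive inequality, integrate the exponentials, and discard the $-1$ from each integral (the only difference is that you step from $n-1$ to $n$ while the paper steps from $n$ to $n+1$).
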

\begin{proof}  
It is straightforward to check that equality holds when $n=0$.  Supposing the inequality holds for $n$ we have
\begin{align}
f_{n+1}(t)
\leq&  b_{n+1}+a_{n+1}\int_0^t e^{\tau s}\left(\sum_{j=0}^n \frac{\prod_{k=j+1}^n a_k}{(n-j)! \tau^{n-j}} b_j  e^{(n-j)\tau s}\right)ds\notag\\
=&  b_{n+1}+\sum_{j=0}^n \frac{\prod_{k=j+1}^{n+1} a_k}{(n-j)! \tau^{n-j}} b_j  \int_0^t e^{(n+1-j)\tau s}ds\notag\\
\leq &  b_{n+1}+\sum_{j=0}^n \frac{\prod_{k=j+1}^{n+1} a_k}{(n+1-j)! \tau^{n+1-j}} b_j  e^{(n+1-j)\tau t} 
=\sum_{j=0}^{n+1} \frac{\prod_{k=j+1}^{n+1} a_k}{(n+1-j)! \tau^{n+1-j}} b_j  e^{(n+1-j)\tau t}\notag\,.
\end{align}
Therefore the claimed inequality holds for $n+1$ and so the claim holds for all $n$ by induction.
\end{proof}
Applying Lemma \ref{lemma:f_n_bound} together with Assumption \ref{assump:main_assumptions_1}\,.\,\ref{assump:ICS}   yields 
\begin{align} 
   e^{n\tau t}\mathbb{E}[\|Z_t\|^{2n}]\leq& \sum_{j=0}^n \frac{\prod_{k=j+1}^n a_k}{(n-j)! \tau^{n-j}} b_j  e^{(n-j)\tau t} \\
\leq &    \alpha n!\sum_{j=0}^n \frac{n!}{j!(n-j)!}  \beta^j (e^{\tau t} D_{\sigma,\eta}/\tau)^{n-j}
=  \alpha n!(\beta+e^{\tau t} D_{\sigma,\eta}/\tau)^n\,,\notag
\end{align}
where we used Assumption \ref{assump:main_assumptions_1}\,.\,\ref{assump:ICS}. Therefore we can conclude
\begin{align}\label{eq:E_Z_2n_final_bound}
\mathbb{E}[\|Z_t\|^{2n}]\leq& \alpha n!\left(\beta e^{-2(\kappa+C_G-L_{\tilde{G}})t}+ \frac{D_{\sigma,\eta}}{2(\kappa+C_G-L_{\tilde{G}})} \right)^n\,\,\,\, \text{ for all $n\in\mathbb{Z}^+$, $t\geq 0$.}
\end{align}

In addition to  bounds on  $Z_t$, we will   require the following bounds on the moments of $\tilde X_t$. 
\begin{align}\label{eq:E_X_2_bound}
    &\mathbb{E}[\|\tilde X_t\|^{2}] \leq \mathbb{E}[\| X_0\|^{2}] e^{-\tilde{K}_{\kappa,\delta}t}\\
& +\left(2A_G+ \|\sigma\|_{F,\infty}^2    +\frac{\kappa}{\delta } \left(\mathbb{E}[\|Z_0\|^{2}] +\frac{(1+1/\epsilon)\|\sigma-\eta\|_{F,\infty}^2}{K_{\kappa,\epsilon}} \right) \right)\frac{1-e^{-\tilde{K}_{\kappa,\delta}t}}{\tilde{K}_{\kappa,\delta}}\notag
\end{align}
and,  for all $t>0$, $n\in\mathbb{Z}^+$,
\begin{align}\label{eq:f_n_bound_X_tilde_final}
\mathbb{E}[\|\tilde X_t\|^{2n}]
\leq&     n!  \left(\tilde{\alpha} + \alpha /2  \right) \left(\frac{D_{\sigma,G}  }{ B_G}+\max\left\{\tilde{\beta} ,  (2\kappa/B_G)^{2 }  \left(\beta  + \frac{D_{\sigma,\eta}}{2(\kappa+C_G-L_{\tilde{G}})} \right)\right\}\right)^n\notag\,.
\end{align}
The proofs are extremely similar to the above calculations; see Appendix \ref{app:tilde_X_2n_bound}   for details.

\subsubsection{Observables Under   $\tilde{X}_t$  Compared to $Y_t$}
We are now prepared  to show that observables of   $\tilde{X}_t$ and $Y_t$ are close to one another in several cases.  First we consider  time-averaged and discounted observables, defined in terms of a function $h(t,x)$ satisfying the following.
\begin{assumption}\label{assump:h_assump1} 
Let   $h:[0,\infty)\times\mathbb{R}^d\to\mathbb{R}$ be  continuous and suppose there exists  $L_h,M_h\geq 0$ such that
\begin{align}
    |h(t,x)-h(t,y)|\leq L_h(1+M_h(\|x\|+\|y\|))\|x-y\| \,\,\,\,\,\,\text{for all $x,y\in\mathbb{R}^d$, $t\in[0,\infty)$.}
\end{align}
\end{assumption}
In the time-averaged case, the above computations imply the following.
 \begin{lemma}\label{lemma:stage1_result_time_avg}
Under Assumptions \ref{assump:main_assumptions_1} and \ref{assump:h_assump1}, for all $T>0$ we have
\begin{align}
&\left|\mathbb{E}\left[\frac{1}{T}\int_0^T h(t,\tilde X_t)dt\right]-\mathbb{E}\left[\frac{1}{T}\int_0^T h(t,Y_t)dt\right]\right|  
\leq\frac{ L_h(1+1/\epsilon)^{1/2}}{K_{\kappa,\epsilon}^{1/2}}(  1+M_h D_{1,T}) \|\sigma-\eta\|_{F,\infty}\\
& +\frac{2L_h(1-e^{-K_{\kappa,\epsilon}T/2})}{K_{\kappa,\epsilon}} (1+M_hD_{2,T}) \frac{\mathbb{E}[\|X_0-Y_0\|^{2}]^{1/2}}{T} \notag\,,\\
&D_{1,T}\coloneqq \frac{ (1+1/\epsilon)^{1/2}\|\sigma-\eta\|_{F,\infty}}{K_{\kappa,\epsilon}^{1/2}} +  \frac{4(1-e^{-\tilde{K}_{\kappa,\delta}T/2})}{\tilde{K}_{\kappa,\delta}T} \mathbb{E}[\| X_0\|^{2}]^{1/2}\label{eq:D1_def}\\
&+ \frac{2}{\tilde{K}_{\kappa,\delta}^{1/2}}\left(2A_G+ \|\sigma\|_{F,\infty}^2    +\frac{\kappa}{\delta } \left(\mathbb{E}[\|X_0-Y_0\|^{2}] +\frac{(1+1/\epsilon)\|\sigma-\eta\|_{F,\infty}^2}{K_{\kappa,\epsilon}} \right) \right)^{1/2}\,,\notag\\
&D_{2,T}\coloneqq   \frac{2}{ \tilde{K}_{\kappa,\delta}^{1/2}}\left(2A_G+ \|\sigma\|_{F,\infty}^2    +\frac{\kappa}{\delta } \left(\mathbb{E}[\|X_0-Y_0\|^{2}] +\frac{(1+1/\epsilon)\|\sigma-\eta\|_{F,\infty}^2}{K_{\kappa,\epsilon}} \right) \right)^{1/2} \label{eq:D2_def}\\
&+  \frac{  (1-e^{-K_{\kappa,\epsilon}T})}{2(1-e^{-K_{\kappa,\epsilon}T/2})}\mathbb{E}[\|Z_0\|^{2}]^{1/2}+ \frac{2\left(1-e^{-(\tilde{K}_{\kappa,\delta}+K_{\kappa,\epsilon})T/2}\right)}{(1+\tilde{K}_{\kappa,\delta}/K_{\kappa,\epsilon}) (1-e^{-K_{\kappa,\epsilon}T/2})}\mathbb{E}[\| X_0\|^{2}]^{1/2}  \,.\notag
\end{align}
\end{lemma}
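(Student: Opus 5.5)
We start from the pointwise bound in Assumption \ref{assump:h_assump1}, applied with $Z_t=\tilde X_t-Y_t$:
\begin{align*}
\left|\mathbb{E}\left[\frac{1}{T}\int_0^T (h(t,\tilde X_t)-h(t,Y_t))dt\right]\right|\leq \frac{L_h}{T}\int_0^T\left(\mathbb{E}[\|Z_t\|]+M_h\mathbb{E}[(\|\tilde X_t\|+\|Y_t\|)\|Z_t\|]\right)dt.
\end{align*}
We bound each term by Cauchy--Schwarz, which reduces everything to second moments. For the first term we use $\mathbb{E}[\|Z_t\|]\leq \mathbb{E}[\|Z_t\|^2]^{1/2}$, insert \eqref{eq:E_Z_2_bound_uniform}, and then apply $\sqrt{a+b}\leq\sqrt a+\sqrt b$. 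This gives
\begin{align*}
\mathbb{E}[\|Z_t\|^2]^{1/2}\leq \mathbb{E}[\|Z_0\|^2]^{1/2}e^{-K_{\kappa,\epsilon}t/2}+\frac{(1+1/\epsilon)^{1/2}\|\sigma-\eta\|_{F,\infty}}{K_{\kappa,\epsilon}^{1/2}}.
\end{align*}
Time-averaging the exponential produces $\frac{2(1-e^{-K_{\kappa,\epsilon}T/2})}{K_{\kappa,\epsilon}T}$. Since $Z_0=X_0-Y_0$, these are exactly the two "$1\cdot$" contributions in the statement.

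For the $M_h$ term we write $\|Y_t\|\leq\|\tilde X_t\|+\|Z_t\|$. The integrand is then at most $2\|\tilde X_t\|\|Z_t\|+\|Z_t\|^2$, and Cauchy--Schwarz gives
\begin{align*}
\mathbb{E}[(\|\tilde X_t\|+\|Y_t\|)\|Z_t\|]\leq \left(2\mathbb{E}[\|\tilde X_t\|^2]^{1/2}+\mathbb{E}[\|Z_t\|^2]^{1/2}\right)\mathbb{E}[\|Z_t\|^2]^{1/2}.
\end{align*}
We bound $\mathbb{E}[\|\tilde X_t\|^2]^{1/2}$ using \eqref{eq:E_X_2_bound}, again splitting square roots:
\begin{align*}
\mathbb{E}[\|\tilde X_t\|^2]^{1/2}\leq \mathbb{E}[\|X_0\|^2]^{1/2}e^{-\tilde K_{\kappa,\delta}t/2}+\tilde K_{\kappa,\delta}^{-1/2}\left(2A_G+\|\sigma\|_{F,\infty}^2+\tfrac{\kappa}{\delta}(\cdots)\right)^{1/2},
\end{align*}
where we drop the factor $(1-e^{-\tilde K t})\leq 1$. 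Multiplying the two square-root bounds gives a finite sum of terms. Each term is a constant times one of $e^{-K_{\kappa,\epsilon}t/2}$, $e^{-K_{\kappa,\epsilon}t}$, $e^{-\tilde K_{\kappa,\delta}t/2}$, or $e^{-(\tilde K_{\kappa,\delta}+K_{\kappa,\epsilon})t/2}$.

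We then group the products by their $\|\sigma-\eta\|_{F,\infty}$ factor and by their $\mathbb{E}[\|Z_0\|^2]^{1/2}$ factor. The terms carrying $\|\sigma-\eta\|_{F,\infty}$ assemble into $D_{1,T}$. Its three pieces come from the $\|Z\|^2$ self-term, from the decaying part of $\tilde X$ after time-averaging, which gives $\frac{4(1-e^{-\tilde K T/2})}{\tilde K T}$, and from the stationary part of $\tilde X$ with its factor $2$. The terms carrying $\mathbb{E}[\|Z_0\|^2]^{1/2}e^{-K_{\kappa,\epsilon}t/2}$ integrate to
\begin{align*}
\frac{1}{T}\int_0^T e^{-(K_{\kappa,\epsilon}+\tilde K_{\kappa,\delta})t/2}dt,\qquad \frac{1}{T}\int_0^T e^{-K_{\kappa,\epsilon}t}dt.
\end{align*}
We factor out $\frac{2(1-e^{-K_{\kappa,\epsilon}T/2})}{K_{\kappa,\epsilon}T}$ from these, which leaves the ratios that appear in $D_{2,T}$.

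The main obstacle is purely bookkeeping: cross terms must not be double-counted, and the leftover $\|\sigma-\eta\|\cdot\|Z_0\|$ cross products need a home. These products can be placed in either group. We would put them into $D_{2,T}$ via the stationary-$Z$ piece, or absorb them into $D_{1,T}$, whichever reproduces the stated constants. No new probabilistic input is needed beyond \eqref{eq:E_Z_2_bound_uniform} and \eqref{eq:E_X_2_bound}.
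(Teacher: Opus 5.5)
Your decomposition is the paper's: Assumption~\ref{assump:h_assump1}, then $\|Y_t\|\le\|\tilde X_t\|+\|Z_t\|$, Cauchy--Schwarz, and finally \eqref{eq:E_Z_2_bound_uniform} and \eqref{eq:E_X_2_bound} with subadditivity of the square root. Your accounting of the first-order terms, and of every product coming from $2\mathbb{E}[\|\tilde X_t\|^2]^{1/2}\mathbb{E}[\|Z_t\|^2]^{1/2}$, reproduces the stated $D_{1,T}$ and $D_{2,T}$ exactly. The step that fails is your treatment of the self-term. Write $a\coloneqq\mathbb{E}[\|Z_0\|^2]^{1/2}$ and $s\coloneqq(1+1/\epsilon)^{1/2}\|\sigma-\eta\|_{F,\infty}/K_{\kappa,\epsilon}^{1/2}$. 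If you bound $\mathbb{E}[\|Z_t\|^2]$ by squaring $a e^{-K_{\kappa,\epsilon}t/2}+s$, you create the cross term $2as\,e^{-K_{\kappa,\epsilon}t/2}$. After time-averaging it contributes $4L_hM_h\,as\,(1-e^{-K_{\kappa,\epsilon}T/2})/(K_{\kappa,\epsilon}T)$.

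This term has no home in the stated bound. Once you drop the factors $1-e^{-K_{\kappa,\epsilon}t}$ and $1-e^{-\tilde K_{\kappa,\delta}t}$ as you describe, every piece of $L_hM_h\,s\,D_{1,T}$ and of the $M_hD_{2,T}$ term is already used up exactly by the other products. Putting the cross term into $D_{2,T}$ means adding $2s$ to it. Putting it into $D_{1,T}$ means adding $4a(1-e^{-K_{\kappa,\epsilon}T/2})/(K_{\kappa,\epsilon}T)$. Neither gives the statement, so your claim that one placement ``reproduces the stated constants'' is false. As written, the plan proves a slightly weaker bound.

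The fix is to make sure the cross term never appears. The self-term is $\mathbb{E}[\|Z_t\|^2]^{1/2}\cdot\mathbb{E}[\|Z_t\|^2]^{1/2}=\mathbb{E}[\|Z_t\|^2]$. Insert \eqref{eq:E_Z_2_bound_uniform} into it directly, without splitting a square root: $\mathbb{E}[\|Z_t\|^2]\le a^2e^{-K_{\kappa,\epsilon}t}+s^2$. Time-averaging then gives two pieces:
\begin{itemize}
\item $L_hM_h s^2$, which is the $s$-piece of $L_hM_h\,s\,D_{1,T}$;
\item $L_hM_h a^2(1-e^{-K_{\kappa,\epsilon}T})/(K_{\kappa,\epsilon}T)$, which is the middle piece of $D_{2,T}$ times its prefactor.
\end{itemize}
With this, the bookkeeping closes with exactly the stated constants. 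This is what the paper does: it writes the third term as $\frac{L_hM_h}{T}\int_0^T\mathbb{E}[\|Z_t\|^2]\,dt$. It uses square-root subadditivity only on the factors $\mathbb{E}[\|Z_t\|^2]^{1/2}$ and $\mathbb{E}[\|\tilde X_t\|^2]^{1/2}$.
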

\begin{remark}
In particular, if  initial conditions  for $Y_t$ and ${X}_t$ are  the same then Lemma \ref{lemma:stage1_result_time_avg} implies an $O(\|\sigma-\eta\|_{F,\infty})$ bound, uniform in $T>0$.  If $X_0-Y_0\neq 0$ then its contribution to the bound is $O(1/T)$ as $T\to\infty$. 
\end{remark}
\begin{proof}
Using the Cauchy-Schwarz inequality we can compute
\begin{align}
&\left|\mathbb{E}\left[\frac{1}{T}\int_0^T h(t,\tilde X_t)dt\right]-\mathbb{E}\left[\frac{1}{T}\int_0^T h(t,Y_t)dt\right]\right|\\
\leq& \frac{L_h}{T}\int_0^T \mathbb{E}\left[  (1+M_h(\|\tilde X_t\|+\|Y_t\|))\|Z_t\|\right] dt\notag\\
 \leq& \frac{L_h}{T}\int_0^T \mathbb{E}[ \|Z_t\|^2]^{1/2}dt+ \frac{ 2L_hM_h}{T}\int_0^T \mathbb{E}[ \|\tilde X_t\|^2]^{1/2}\mathbb{E}[\|Z_t\|^2]^{1/2}dt+ \frac{L_hM_h }{T}\int_0^T\mathbb{E}[ \|Z_t\|^2]   dt\notag\,.
\end{align}
The result then follows from applying the bounds \eqref{eq:E_Z_2_bound_uniform}    and  \eqref{eq:E_X_2_bound}    along with sub-additivity of the square root.
\end{proof}

Next we consider the case of discounted observables; for simplicity, and because we view it to be the primary case of interest in such applications,  we assume the initial conditions are equal.  To avoid making additional integrability assumptions (regarding the limit as $T\to\infty$), we phrase the result as follows.
\begin{lemma}\label{lemma:stage1_result_discount}
In addition to Assumptions \ref{assump:main_assumptions_1} and \ref{assump:h_assump1}, assume that $\rho(dt)$ is a finite positive measure on $[0,\infty)$ and $X_0=Y_0$.  Then, letting $\|\rho\|_{\mathrm{TV}}$ denote the total variation of $\rho$, we have
\begin{align}
&\sup_{T\geq 0}\left|\mathbb{E}\left[\int_0^T h(t,\tilde X_t)\rho(dt)\right]-\mathbb{E}\left[ \int_0^T h(t,Y_t)\rho(dt) \right]\right| \\
\leq&\frac{L_h(1+1/\epsilon)^{1/2}\|\sigma-\eta\|_{F,\infty}}{K_{\kappa,\epsilon}^{1/2}}
\left(\|\rho\|_{\mathrm{TV}}+M_h\tilde{D}\right)\notag\,,\\
&\tilde{D}\coloneqq  2\mathbb{E}[\| X_0\|^{2}]^{1/2}\int_0^\infty e^{-\tilde{K}_{\kappa,\delta}t/2}\rho(dt)+\frac{(1+1/\epsilon)^{1/2}\|\rho\|_{\mathrm{TV}}\|\sigma-\eta\|_{F,\infty}}{K_{\kappa,\epsilon}^{1/2}}\label{eq:tilde_D_def} \\
&+\frac{2}{\tilde{K}_{\kappa,\delta}^{1/2}}\left(2A_G+ \|\sigma\|_{F,\infty}^2    +    \frac{\kappa}{\delta}\frac{(1+1/\epsilon)\|\sigma-\eta\|_{F,\infty}^2}{K_{\kappa,\epsilon}}  \right)^{1/2}\|\rho\|_{\mathrm{TV}}  \,.\notag
\end{align}
\end{lemma}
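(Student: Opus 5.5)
The argument mirrors the proof of Lemma \ref{lemma:stage1_result_time_avg}, with the normalized Lebesgue measure $dt/T$ replaced by the finite measure $\rho(dt)$ and specialized to $X_0=Y_0$.

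\textbf{Pointwise bound on the observables.} For any $T\geq0$, Assumption \ref{assump:h_assump1} and $Z_t=\tilde X_t-Y_t$ give
\begin{align*}
\left|\mathbb{E}\left[\int_0^T h(t,\tilde X_t)\rho(dt)\right]-\mathbb{E}\left[\int_0^T h(t,Y_t)\rho(dt)\right]\right|
\leq L_h\int_0^\infty \mathbb{E}\left[(1+M_h(\|\tilde X_t\|+\|Y_t\|))\|Z_t\|\right]\rho(dt)\,.
\end{align*}
Since $\rho$ is positive and the integrand is nonnegative, the right-hand side does not depend on $T$. This immediately handles the supremum over $T$, and it also avoids any integrability assumption on $h(t,\cdot)$ along the paths.

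\textbf{Splitting the integrand.} Write $\|Y_t\|\leq\|\tilde X_t\|+\|Z_t\|$. Cauchy--Schwarz then bounds the integrand by
\begin{align*}
\mathbb{E}[\|Z_t\|^2]^{1/2}+2M_h\mathbb{E}[\|\tilde X_t\|^2]^{1/2}\mathbb{E}[\|Z_t\|^2]^{1/2}+M_h\mathbb{E}[\|Z_t\|^2].
\end{align*}

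\textbf{Moment inputs.} Because $X_0=Y_0$ and $\tilde X_0=X_0$, we have $Z_0=0$. The bound \eqref{eq:E_Z_2_bound_uniform} then gives, uniformly in $t$,
\begin{align*}
\mathbb{E}[\|Z_t\|^2]^{1/2}\leq (1+1/\epsilon)^{1/2}\|\sigma-\eta\|_{F,\infty}/K_{\kappa,\epsilon}^{1/2}.
\end{align*}
Likewise \eqref{eq:E_X_2_bound} with $\mathbb{E}[\|Z_0\|^2]=0$, combined with subadditivity of the square root and $1-e^{-\tilde K_{\kappa,\delta}t}\leq1$, gives
\begin{align*}
\mathbb{E}[\|\tilde X_t\|^2]^{1/2}\leq \mathbb{E}[\|X_0\|^2]^{1/2}e^{-\tilde K_{\kappa,\delta}t/2}+\tilde K_{\kappa,\delta}^{-1/2}\left(2A_G+\|\sigma\|_{F,\infty}^2+\frac{\kappa}{\delta}\frac{(1+1/\epsilon)\|\sigma-\eta\|_{F,\infty}^2}{K_{\kappa,\epsilon}}\right)^{1/2}.
\end{align*}

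\textbf{Assembling the bound.} Substitute these estimates and integrate against $\rho$.
\begin{itemize}
\item The first term contributes $\|\rho\|_{\mathrm{TV}}$ times the $Z$-bound.
\item The cross term contributes $2M_h$ times the $Z$-bound times $\mathbb{E}[\|X_0\|^2]^{1/2}\int e^{-\tilde K_{\kappa,\delta} t/2}\rho(dt)$, plus the constant part of the $\tilde X$-bound times $\|\rho\|_{\mathrm{TV}}$.
\item The quadratic term contributes $M_h$ times the square of the $Z$-bound times $\|\rho\|_{\mathrm{TV}}$.
\end{itemize}
Factor out $L_h(1+1/\epsilon)^{1/2}\|\sigma-\eta\|_{F,\infty}/K_{\kappa,\epsilon}^{1/2}$. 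What remains is exactly $\|\rho\|_{\mathrm{TV}}+M_h\tilde D$, with $\tilde D$ as in \eqref{eq:tilde_D_def}.

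\textbf{Main obstacle.} There is no real obstacle; the only care needed is bookkeeping. One must keep the exponentially decaying part of the $\tilde X$ moment bound inside the $\rho$-integral rather than bounding it by $\|\rho\|_{\mathrm{TV}}$, so that the constants match $\tilde D$. One must also check that passing the supremum over $T$ through the positivity argument is legitimate, which holds since all integrals involved are of nonnegative quantities against a positive measure.
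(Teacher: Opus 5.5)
Your proposal is correct and takes the same route as the paper. The paper's proof just reruns the argument of Lemma \ref{lemma:stage1_result_time_avg} with $dt/T$ replaced by $\rho(dt)$: the Cauchy--Schwarz split via $\|Y_t\|\leq\|\tilde X_t\|+\|Z_t\|$, then \eqref{eq:E_Z_2_bound_uniform} and \eqref{eq:E_X_2_bound} with $Z_0=0$ and subadditivity of the square root, and your bookkeeping reproduces $\tilde D$ exactly. One small overstatement: writing the difference of expectations as a single expectation does use that each expectation is finite for each finite $T$. This holds by continuity of $h$, Assumption \ref{assump:h_assump1}, and the moment bounds of Remark \ref{remark:existence_uniqueness}, so it is not a gap, but the construction is not free of integrability input.
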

\begin{proof}
The proof is nearly identical to that of Lemma \ref{lemma:stage1_result_time_avg}; some simplification occurs due to the assumption that $X_0=Y_0$.
\end{proof}

Finally, we consider the observables that are related to the linear parabolic PDE  \eqref{eq:Feynman_Kac_pde_intro}. 
   \begin{assumption}\label{assump:PDE}
Let  $h:[0,\infty)\times \mathbb{R}^{d}\to\mathbb{R}$ be continuous and $L_h$-Lipschitz in $x$ for all $t$   and let $f:\mathbb{R}^d\to\mathbb{R}^d$ be $L_f$-Lipschitz.
\end{assumption}

\begin{lemma}\label{lemma:Feynman-Kac_X_tilde_Y_bound}
Under Assumptions \ref{assump:main_assumptions_1} and \ref{assump:PDE}, let $\{(\tilde{X}^{t,x}_s,Y^{t,x}_s)\}_{s\geq t}$ be the solution to  \eqref{eq:tilde_X}\,-\,\eqref{eq:Y} started at position $(x,x)\in\mathbb{R}^{d+d}$ at time $t\geq 0$. Given $T>0$, define  
\begin{align}
\tilde{u}_T(t,x)\coloneqq&\mathbb{E}\left[f(\tilde{X}^{t,x}_T)  +\int_t^T h(s,\tilde{X}_s^{t,x})   ds\right]\,,\,\,\,\,\,\,
v_T(t,x)\coloneqq\mathbb{E}\left[f(Y^{t,x}_T) +\int_t^T h(s,Y_s^{t,x}) ds\right]\,.\notag
\end{align}
For all $t\in[0,T]$, $x\in\mathbb{R}^d$ we have
\begin{align}
|\tilde{u}_T(t,x)-v_T(t,x)|
\leq&(  (T-t) L_h + L_f)\frac{(1+1/\epsilon)^{1/2}\|\sigma-\eta\|_{F,\infty}}{K_{\kappa,\epsilon}^{1/2}}\,.
\end{align}
\end{lemma}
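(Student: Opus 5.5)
The plan is to compare $\tilde X^{t,x}$ and $Y^{t,x}$ pathwise, exactly as in the proof of Lemma \ref{lemma:stage1_result_time_avg}, using the Lipschitz bounds from Assumption \ref{assump:PDE} together with the second-moment estimate \eqref{eq:E_Z_2_bound_uniform}. Set $Z_s\coloneqq\tilde X^{t,x}_s-Y^{t,x}_s$ for $s\ge t$. Because $h_s$ is $L_h$-Lipschitz and $f$ is $L_f$-Lipschitz, the triangle inequality gives
\begin{align}
|\tilde u_T(t,x)-v_T(t,x)|\leq L_f\,\mathbb{E}[\|Z_T\|]+L_h\int_t^T\mathbb{E}[\|Z_s\|]\,ds\,.
\end{align}
Jensen's inequality then reduces everything to bounds on $\mathbb{E}[\|Z_s\|^2]^{1/2}$. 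This is the $M_h=0$ case of the earlier argument, so no moment bound on $\tilde X$ is needed.

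To bound $\mathbb{E}[\|Z_s\|^2]$, I would repeat the $n=1$ Itô computation of \eqref{eq:Z_2n_Ito} for the system \eqref{eq:tilde_X}--\eqref{eq:Y}, now started at time $t$ from the deterministic point $(x,x)$ and with $e^{K(s-t)}$ replacing $e^{Ks}$. The structural inputs carry over unchanged: the monotonicity and Lipschitz decomposition of $G$ (Assumption \ref{assump:main_assumptions_1}.\ref{assump:G_decomp}) and the Lipschitz property of $\sigma$. For the diffusion term I would split
\begin{align}
\|\sigma(s,\tilde X_s)-\eta(s,Y_s)\|_F^2\leq(1+\epsilon)L_\sigma^2\|Z_s\|^2+(1+1/\epsilon)\|\sigma-\eta\|_{F,\infty}^2\,,
\end{align}
then take $K=K_{\kappa,\epsilon}$. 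The stochastic integral is a true martingale by the moment bounds of Remark \ref{remark:existence_uniqueness}, which hold equally for a deterministic initial time and point. Since $Z_t=0$, the analogue of \eqref{eq:E_Z_2_bound_uniform} becomes
\begin{align}
\mathbb{E}[\|Z_s\|^2]\leq\frac{(1+1/\epsilon)\|\sigma-\eta\|_{F,\infty}^2}{K_{\kappa,\epsilon}}\big(1-e^{-K_{\kappa,\epsilon}(s-t)}\big)\leq\frac{(1+1/\epsilon)\|\sigma-\eta\|_{F,\infty}^2}{K_{\kappa,\epsilon}}
\end{align}
for all $s\ge t$.

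Inserting this uniform bound into the first display yields $\big(L_f+(T-t)L_h\big)(1+1/\epsilon)^{1/2}\|\sigma-\eta\|_{F,\infty}K_{\kappa,\epsilon}^{-1/2}$, which is the claim.

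The only step that needs any care is the time shift: checking that the Itô argument leading to \eqref{eq:E_Z_2_bound} is unaffected by starting at time $t$ rather than $0$. It is, because every assumption on $F,G,\sigma,\eta$ holds uniformly in time. Beyond that, the proof is routine.
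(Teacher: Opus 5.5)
Your proposal is correct and is essentially the paper's proof. The paper handles the running-cost term by invoking Lemma \ref{lemma:stage1_result_time_avg} with $M_h=0$ and equal initial data $x$, which is exactly the Lipschitz/Jensen/\eqref{eq:E_Z_2_bound_uniform} argument you unpack. It bounds the terminal term via the Lipschitz property of $f$ and \eqref{eq:E_Z_2_bound_uniform}, after the same observation that shifting the initial time to $t$ leaves all constants in Assumption \ref{assump:main_assumptions_1} unchanged.
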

\begin{remark}
One could also consider locally Lipschitz functions, as in Assumption \ref{assump:h_assump1}, but for simplicity of the presentation we only consider the Lipschitz case.
\end{remark}
\begin{proof}
Noting that the SDEs for $\tilde{X}_s^{t,x}$ and $Y_s^{t,x}$ also satisfy Assumption \ref{assump:main_assumptions_1} (simply with a change in the initial time, which does not impact the value of any of the constants defined therein), we can apply    Lemma \ref{lemma:stage1_result_time_avg} with $X_0=x=Y_0$ and $M_h=0$ to get
\begin{align}
\left|\mathbb{E}\left[\int_t^T h(s,\tilde X^{t,x}_s)ds\right]-\mathbb{E}\left[\int_t^T h(t,Y^{t,x}_s)ds\right]\right| 
\leq&\frac{  (T-t) L_h(1+1/\epsilon)^{1/2}}{K_{\kappa,\epsilon}^{1/2}}  \|\sigma-\eta\|_{F,\infty}  
\end{align}
for all $0\leq t\leq T$.  Using the Lipschitz property of $f$ along with  \eqref{eq:E_Z_2_bound_uniform} we can compute
\begin{align}
\left|\mathbb{E}[f(\tilde{X}^{t,x}_T)  ]- \mathbb{E}\left[f(Y^{t,x}_T)  \right]\right|\leq&L_f\frac{(1+1/\epsilon)^{1/2}\|\sigma-\eta\|_{F,\infty}}{K_{\kappa,\epsilon}^{1/2}}\,.\notag
\end{align}
 Combining these we obtain the claimed bound.
\end{proof}

\subsection{Information-Theoretic UQ Bound Comparing $\tilde{X}_t$ and $X_t$}
We now proceed to show that observables of  $\tilde{X}_t$ and $X_t$ are close to one another by combining   information-theoretic UQ bounds   with functional inequalities, an approach originally developed in \cite{Birrell2020}. This approach requires that the two SDEs being compared  share the same diffusion term.  Hence  the first stage of our method provides the key to transforming the  problem into one that is within reach of such  techniques.  In this section we   generalize the method from  \cite{Birrell2020} to cover time-dependent observables by using the bounds on the Feynman-Kac semigroup with time-dependent potential  derived in  \cite{birrell2026quasistatic}.

Start by  applying the   information-theoretic UQ bound given in Section 2.2 of \cite{dupuis2016path}    to an observable (measurable function)  on   path-space, $g:C([0,T],\mathbb{R}^d)\to\mathbb{R}$, and the distributions of  $\{\tilde{X}_t\}_{t\in[0,T]}$ and $\{X_t\}_{t\in[0,T]}$; we denote  the corresponding distributions by $P_{\tilde{X}|_{[0,T]}}$ and $P_{{X}|_{[0,T]}}$  respectively and expectations with respect to these will be denoted by $E_{\tilde{X}|_{[0,T]}}$ and  $E_{{X}|_{[0,T]}}$.  With this notation, the UQ bound is given by the following.
\begin{lemma}\label{lemma:UQ_bound_general}
Under Assumption \ref{assump:main_assumptions_1},  for all $g\in L^1(P_{{X}|_{[0,T]}})$ we have
\begin{align}\label{eq:UQ_bound_general}
&\pm\left(E_{{\tilde{X}|_{[0,T]}}}[g]-E_{{{X}|_{[0,T]}}}[g]\right)\\
\leq &\inf_{c>0}\left\{\frac{1}{c}\log E_{{{X}|_{[0,T]}}}\left[\exp(\pm c(g-E_{{{X}|_{[0,T]}}}[g]))\right]+\frac{1}{c}\mathrm{KL}(P_{\tilde{X}|_{[0,T]}}\|P_{{X}|_{[0,T]}})\right\}\,,\notag
\end{align}
where the bound holds regardless of the definition of $E_{{\tilde{X}|_{[0,T]}}}[g]$ in the $\infty-\infty$ case. 
\end{lemma}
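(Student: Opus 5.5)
The plan is to derive the bound from the Donsker--Varadhan (Gibbs) variational principle for relative entropy, applied to $P\coloneqq P_{{X}|_{[0,T]}}$ and $Q\coloneqq P_{\tilde{X}|_{[0,T]}}$ on $C([0,T],\mathbb{R}^d)$. If $\mathrm{KL}(Q\|P)=\infty$ the right-hand side of \eqref{eq:UQ_bound_general} is $+\infty$ for every $c$. In that case the inequality is trivial, whatever value is assigned to $E_{\tilde X|_{[0,T]}}[g]$. So assume $\mathrm{KL}(Q\|P)<\infty$, hence $Q\ll P$. Fix a sign $\pm$ and $c>0$, and set $\phi\coloneqq \pm c(g-E_P[g])$. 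This is well defined and $P$-integrable because $g\in L^1(P)$.

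The key inequality I will aim for is
\begin{align*}
E_Q[\phi]\leq \log E_P[e^{\phi}]+\mathrm{KL}(Q\|P).
\end{align*}
If $E_P[e^{\phi}]=\infty$ there is nothing to prove, so assume it is finite. Define the tilted measure $P^\phi\coloneqq e^{\phi}P/E_P[e^\phi]$. The chain rule for densities gives
\begin{align*}
\mathrm{KL}(Q\|P)=\mathrm{KL}(Q\|P^\phi)+E_Q[\phi]-\log E_P[e^\phi]\geq E_Q[\phi]-\log E_P[e^\phi].
\end{align*}
This rests on $\mathrm{KL}(Q\|P^\phi)\geq 0$.

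The main obstacle is justifying this identity when $\phi$ is unbounded and $E_Q[\phi]$ may not be defined a priori. I would first prove the inequality for truncations $\phi_N\coloneqq\min(\phi,N)$, which are bounded above. For these, $E_Q[\phi_N]$ is well defined in $[-\infty,N]$, and the identity holds with $\log E_P[e^{\phi_N}]\leq\log E_P[e^{\phi}]$. Next I would show $E_Q[\phi^-]<\infty$, using the Young-type inequality $xy\leq x\log x - x + e^{y}$ with $x=dQ/dP$ and $y=\phi^-$. This gives $E_Q[\phi^-]\leq \mathrm{KL}(Q\|P)+1+E_P[e^{\phi^-}]$, where $e^{\phi^-}\leq 1+e^{\phi}$, so the right side is finite. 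Monotone convergence on $\phi_N^+$ then yields the inequality for $\phi$ itself, with $E_Q[\phi]\in(-\infty,\infty]$ well defined. Finiteness of the right-hand side forces $E_Q[\phi^+]<\infty$, so $g\in L^1(Q)$. Consequently $E_Q[\phi]=\pm c(E_Q[g]-E_P[g])$, and the $\infty-\infty$ ambiguity does not arise in the nontrivial case.

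Dividing by $c$ and taking the infimum over $c>0$ gives \eqref{eq:UQ_bound_general}. Assumption \ref{assump:main_assumptions_1} is used only to guarantee that the strong solutions exist, so that $P$ and $Q$ are well-defined Borel probability measures on path space. The Girsanov machinery is not needed here; it enters only later, in Lemma \ref{lemma:KL_bound}.
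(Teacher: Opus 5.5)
Your route (tilting $P$ by $e^{\phi}$, i.e.\ the Gibbs/Donsker--Varadhan variational principle, with truncation to handle unbounded $\phi$) is the standard one. The paper gives no proof of its own; it imports the bound from Section 2.2 of \cite{dupuis2016path}, which rests on the same principle. Your trivial cases and the truncated inequality $E_Q[\min(\phi,N)]\le \log E_P[e^{\phi}]+\mathrm{KL}(Q\|P)$ are fine.

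The step that fails is the claim $E_Q[\phi^-]<\infty$. The inequality $e^{\phi^-}\le 1+e^{\phi}$ is false: where $\phi<0$ the left side is $e^{-\phi}$. The correct bound is $e^{\phi^-}\le 1+e^{-\phi}$, which involves the MGF of the \emph{opposite} sign, and nothing makes that finite. The claim itself is false in general. Suppose $\mathrm{KL}(Q\|P)<\infty$ and $dQ/dP$ is unbounded. Choose $\Psi\ge 0$ (e.g.\ a step function on dyadic level sets of $dQ/dP$) with $E_P[\Psi(dQ/dP)]<\infty$ but $E_P[(dQ/dP)\,\Psi(dQ/dP)]=\infty$, and set $g=-\Psi(dQ/dP)$. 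Then $g\in L^1(P)$, and for the $+$ sign $E_P[e^{\phi}]\le e^{cE_P[\Psi(dQ/dP)]}<\infty$. Yet $E_Q[\phi^-]=\infty$ and $E_Q[g]=-\infty$. So your conclusion that $g\in L^1(Q)$ in the nontrivial case is wrong, and your monotone-convergence step does not cover this situation.

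The repair is to apply the same Young inequality to $\phi^+$, which is what the one-sided MGF actually controls. Since $e^{\phi^+}\le 1+e^{\phi}$,
\[
E_Q[\phi^+]\le \mathrm{KL}(Q\|P)-1+E_P[e^{\phi^+}]\le \mathrm{KL}(Q\|P)+E_P[e^{\phi}]<\infty .
\]
Hence $E_Q[\phi]=E_Q[\phi^+]-E_Q[\phi^-]\in[-\infty,\infty)$ is unambiguous. If $E_Q[\phi^-]=\infty$, the left-hand side is $-\infty$ and there is nothing to prove. Otherwise $E_Q[\min(\phi,N)]=E_Q[\min(\phi^+,N)]-E_Q[\phi^-]$, and monotone convergence finishes as you intended.

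This also gives the correct reading of the $\infty-\infty$ clause. Finiteness of the right-hand side for the sign $\pm$ forces $E_Q[(\pm g)^+]<\infty$. So the ambiguous case can only occur when both right-hand sides are $+\infty$. Integrability of $g$ under $Q$ is neither needed nor true in general.
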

In \eqref{eq:UQ_bound_general},  $\mathrm{KL}$ denotes the KL-divergence, i.e., relative entropy, defined by $\mathrm{KL}(Q\|P)\coloneqq E_{Q}[\log(dQ/dP)]$ if $Q\ll P$ and $\mathrm{KL}(Q\|P)\coloneqq\infty$ otherwise.

The objective of the minimization on the right-hand sides of  \eqref{eq:UQ_bound_general}   consists of two terms:
\begin{enumerate}[(a)]
\item The first involves the centered MGF of the observable, $g$.  For this term we will employ standard tail-bounds, i.e., sub-Gaussian and sub-exponential. The case of time-averaged observables requires additional care to ensure the bounds behave well as $T\to\infty$. We address this using $\log$-Sobolev inequalities, as discussed in Section \ref{sec:functional_ineq}.  This technique  also leads to sharper sensitivity bounds for parabolic PDEs.
\item The second term involves the relative entropy between the distributions on path-space of the solutions to the SDEs; in particular, it does not depend on the choice of observable.  We discuss  this term further in Section \ref{sec:Girsanov_KL}.
\end{enumerate}

\subsubsection{MGF and UQ Bound for Time-Averaged Observables via $\log$-Sobolev Inequalities}\label{sec:functional_ineq}
Here we derive a bound on the centered MGF of time-averaged observables  which is well-behaved in the limit as $T\to\infty$.  We will work under the following additional   assumptions.
\begin{assumption}\label{assump:main_assumptions_2}
Assume the following.
\begin{enumerate}
\item    \label{assump:inv_dist}   Suppose the SDE \eqref{eq:X} for $X_t$ is time-homogeneous and has an invariant distribution $\mu_{X}^*$ and the initial condition, $X_0$,  is distributed as $\mu_{X}^*$.
\item \label{assump:log_Sob} Suppose the generator of the SDE  \eqref{eq:X}, given by
\begin{align}\label{eq:gen_def}
A_X[g](x)\coloneqq\frac{1}{2}\sum_{i,j,k}\sigma^i_k(x)\sigma^j_k(x)\partial_{i}\partial_j g(x)+\sum_iF^i(x)\partial_ig(x)\,,
\end{align}
satisfies a $\log$-Sobolev inequality  with respect to $\mu_{X}^*$ with parameter $C_*\in(0,\infty)$, i.e.,
\begin{align}\label{eq:log_Sob_def}
\int g^2\log(g^2)d\mu_{X}^*\leq -C_* \int  A_X[g]g d\mu_{X}^*\,\,\text{ for all }g\in C^2_{pb}(\mathbb{R}^d) \text{ with }\|g\|_{L^2(\mu_{X}^*)}=1\,,
\end{align}
where we   let $C^2_{pb}(\mathbb{R}^d)$ denote the set of $C^2$ real-valued functions on $\mathbb{R}^d$ whose zeroth, first, and second derivatives are polynomially bounded. 
\end{enumerate}
\end{assumption}
\begin{remark}
A number of prior works have used functional inequalities to derive concentration inequalities for stochastic systems  \cite{WU2000435, cattiaux_guillin_2008, doi:10.1137/S0040585X97986667,Birrell2020,birrell2026quasistatic}; our usage   of functional inequalities bears some resemblance to these works, though our end goal differs. We utilize $\log$-Sobolev inequalities  \cite{Gross1975,rothaus1978lower,Bakry1984,bakry1997sobolev,carlen2004logarithmic,bakry2013analysis} due to their ability to handle unbounded potentials.   Note that some authors define $\log$-Sobolev inequalities in terms of  the reciprocal of our parameter $C_*$ and/or differing by factor of $2$. Here we follow the definition from, e.g., \cite{WU2000435}.   
\end{remark}
\begin{remark}
The classical result from \cite{Bakry1984} produces  a $\log$-Sobolev inequality with explicit constant $C_*$ under the following conditions: If $F(x)=-\nabla V(x)$   and the noise is additive then a lower bound on the Hessian, $D_x^2V(x)\geq 2C_*^{-1}I$ for some $C_*>0$, implies a $\log$-Sobolev inequality  with constant $C_*$.  A $\log$-Sobolev inequality   can also be obtained when a bounded perturbation is added  to such a potential; see, e.g.,  Proposition 5.1.6 in \cite{bakry2013analysis}.
 \end{remark}

To bound the MGF term in \eqref{eq:UQ_bound_general} for the observable $g(\gamma)=\int_0^T h(t,\gamma_t)dt$ we utilize the approach developed in \cite{birrell2026quasistatic} for bounding the Feynman-Kac semigroup with a time-dependent potential. Specifically,     we require a variant of Theorem 1 from \cite{birrell2026quasistatic}. The setting under consideration here allows for a streamlined proof; for completeness, we provide the details in Appendix \ref{app:Feynman_Kac_bound}.   The following two lemmas further specialize this result to two important cases; sub-Gaussian and sub-exponential observables, though other forms of tail behavior     could  similarly be analyzed.

 First we consider sub-Gaussian  observables;   given $\sigma_{\mathrm{SG}}\in(0,\infty)$, recall  that a random variable $g$ is $\sigma_{\mathrm{SG}}$-sub-Gaussian with respect to a distribution $Q$   if
\begin{align}\label{eq:sub_Gauss_def}
E_Q\left[ \exp\left(\lambda (g-E_Q[g])\right)\right]\leq \exp\left(\lambda^2\sigma_{\mathrm{SG}}^2/2\right) \, \text{ for all }\,\lambda\in\mathbb{R}\,.
\end{align}
  In particular, this case applies to bounded $h$;  see, e.g., Chapter 2 in \cite{vershynin2018high} for background.
\begin{lemma}\label{lemma:MGF_bound_time_avg_subGauss}
 Under Assumptions \ref{assump:main_assumptions_1}, \ref{assump:h_assump1} and \ref{assump:main_assumptions_2},  suppose also that. for all $t\in[0,T]$, $h_t$ is $\sigma_{h_t}$-sub-Gaussian with respect to $\mu_{X}^*$ for some $\sigma_{h_t}\in(0,\infty)$ and $\sigma_{h_t}\in L^2([0,T],dt)$.  Then
\begin{align}
&\log \mathbb{E}\left[\exp\left(\pm c\int_0^T ( h(t,X_t)-E_{\mu_X^*}[h_t])dt \right)\right]\leq \frac{C_* c^2}{2 }\int_0^T \sigma_{h_t}^2 dt \,\,\,\,\,\,\text{ for all }c>0\,.
\end{align}

\end{lemma}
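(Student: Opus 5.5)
The plan is to bound the MGF through the Feynman--Kac semigroup with time-dependent potential $V_t \coloneqq \pm c\,\hat{h}_t$, where $\hat{h}_t = h_t - E_{\mu_X^*}[h_t]$, in the style of Wu and Cattiaux--Guillin, as adapted in \cite{birrell2026quasistatic}. Since $X_0 \sim \mu_X^*$, the Markov property lets us write
\begin{align*}
\mathbb{E}\left[\exp\left(\int_0^T V_t(X_t)\,dt\right)\right] = \langle 1, P^V_{0,T} 1\rangle_{L^2(\mu_X^*)}\,,
\end{align*}
where $P^V_{s,t}$ is the associated Feynman--Kac propagator. We then aim for the estimate
\begin{align*}
\|P^V_{0,T}\|_{L^2\to L^2} \leq \exp\left(\int_0^T \Lambda(V_t)\,dt\right),
\qquad
\Lambda(V) \coloneqq \sup\left\{\int V g^2\,d\mu_X^* + \int A_X[g]\,g\,d\mu_X^* \;:\; \|g\|_{L^2(\mu_X^*)} = 1\right\}.
\end{align*}
This is the variant of Theorem 1 of \cite{birrell2026quasistatic} proved in Appendix \ref{app:Feynman_Kac_bound}, and I would take it as given. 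Heuristically, one differentiates $\|u_t\|_{L^2}^2$ for $u_t = P^V_{t,T} f$ and uses $\partial_t u = -(A_X + V_t)u$ together with the definition of $\Lambda$. Cauchy--Schwarz then gives $\langle 1, P^V_{0,T}1\rangle \leq \|P^V_{0,T}\|$.

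The key step is to bound $\Lambda(V_t)$ using the log-Sobolev inequality. For $\|g\|_{L^2} = 1$, the entropy variational (Gibbs) inequality with $\nu = g^2\mu_X^*$ gives, for any $\lambda > 0$,
\begin{align*}
\int V g^2\,d\mu_X^* \leq \frac{1}{\lambda}\log E_{\mu_X^*}\left[e^{\lambda V}\right] + \frac{1}{\lambda}\int g^2\log g^2\,d\mu_X^*\,.
\end{align*}
Take $\lambda = C_*$ and apply \eqref{eq:log_Sob_def}: the term $\frac{1}{C_*}\int g^2\log g^2\,d\mu_X^*$ is at most $-\int A_X[g]\,g\,d\mu_X^*$, which cancels the Dirichlet-form term in $\Lambda$. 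Hence
\begin{align*}
\Lambda(V_t) \leq \frac{1}{C_*}\log E_{\mu_X^*}\left[e^{\pm C_* c\,\hat{h}_t}\right] \leq \frac{1}{C_*}\cdot\frac{C_*^2 c^2 \sigma_{h_t}^2}{2} = \frac{C_* c^2\sigma_{h_t}^2}{2}\,,
\end{align*}
using the sub-Gaussian bound \eqref{eq:sub_Gauss_def}. Integrating over $t \in [0,T]$ and taking logarithms yields the claim.

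The main obstacle is technical. We must justify the Feynman--Kac operator bound for unbounded $V_t$, and restrict the supremum in $\Lambda$ to $g \in C^2_{pb}$ so that the log-Sobolev inequality applies. This requires the continuity and local Lipschitz growth in Assumption \ref{assump:h_assump1}, the integrability $\sigma_{h_t} \in L^2$, and moment bounds for $X_t$ so that the relevant exponential moments are finite and a truncation argument $V \wedge N$ passes to the limit by monotone convergence. I would handle this by first proving the estimate for bounded, truncated $h$, then letting the truncation level tend to infinity. For the truncated functions, the sub-Gaussian constant can be controlled with a slight loss that vanishes in the limit, or one can apply the Gibbs step directly with the untruncated $V$ and monotone convergence.
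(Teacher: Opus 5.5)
Your proposal is correct and follows the paper's route. The paper applies Theorem \ref{thm:MGF_bound} (whose proof contains your $\Lambda$-step, the log-Sobolev inequality, the Gibbs variational formula, and the truncation argument) to $\pm c\hat{h}$, then inserts the sub-Gaussian bound with $\lambda=\pm cC_*$. The only difference is bookkeeping: that theorem is already stated in the form $C_*^{-1}\int_0^T\log\int e^{C_*h_t}\,d\mu_X^*\,dt$ for unbounded $h$ satisfying Assumption \ref{assump:h_assump1}, so the paper's proof of the lemma is two lines and the technicalities you list are handled there.
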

\begin{proof}
By the definition of a sub-Gaussian random variable \eqref{eq:sub_Gauss_def}, we have 
\begin{align}
\int \exp\left(\pm c C_* \hat{h}_t \right) d\mu_{X}^*\leq e^{ c^2  C_*^2 \sigma_{h_t}^2/2}
\end{align}
  for all $c>0$, where $\hat{h}(t,x)\coloneqq h(t,x)-E_{\mu_X^*}[h_t]$. Applying  Theorem 1 from \cite{birrell2026quasistatic} (see also Theorem \ref{thm:MGF_bound} in Appendix \ref{app:Feynman_Kac_bound}) to $\pm c\hat{h}(t,x)$, noting that if $h$ satisfies  Assumption \ref{assump:h_assump1} then so does $\pm c\hat{h}(t,x)$,  and for all $c>0$ we can   compute
\begin{align}
\log \mathbb{E}\left[\exp\left(\pm c\int_0^T \hat{h}(t,X_t)dt \right)\right]\leq& \int_0^TC_*^{-1}\log\left( \int \exp\left(\pm c C_*(  {h}_t- E_{\mu_{X}^*}[ h_t]) \right) d\mu_{X}^*\right)dt\\
\leq& \frac{C_* c^2}{2 }\int_0^T \sigma_{h_t}^2 dt\notag\,.
\end{align}
\end{proof}

Next we consider sub-exponential observables. More  specifically, we consider those satisfying a  Bernstein-type MGF bound. Given $\sigma_B,b\in(0,\infty)$, we say that a random variable $g$ satisfies a $(\sigma_B,b)$-Bernstein MGF bound with respect to a distribution $Q$   if
\begin{align}\label{eq:Bernstein_def}
E_Q\left[ \exp\left(\lambda (g-E_Q[g])\right)\right]\leq \exp\left(\frac{\lambda^2\sigma_B^2}{2(1-b|\lambda|)}\right) \, \text{ for all }\,|\lambda|<1/b\,.
\end{align}
Such bounds  can be proven via the  Bernstein condition;  see Proposition 2.10 in \cite{wainwright2019high}.
\begin{lemma}\label{lemma:MGF_bound_time_avg_subexp}
 Under Assumptions \ref{assump:main_assumptions_1}, \ref{assump:h_assump1} and \ref{assump:main_assumptions_2},  suppose also that for all $t\in[0,T]$, $h_t$ satisfies a $(\sigma_{h_t},b)$-Bernstein MGF bound with respect to $\mu_{X}^*$  with $\sigma_{h_t}\in L^2([0,T],dt)$.  Then
\begin{align} 
&\log \mathbb{E}\left[\exp\left(\pm c \int_0^{T} ( h(t,X_t)-E_{\mu_X^*}[h_t]) dt\right)\right]\leq    \frac{ c^2 C_*}{2(1-c C_*b)} \int_0^T \! \sigma_{h_t}^2dt\,\,\text{ for all }c\in(0,1/(C_*b)).
\end{align}

\end{lemma}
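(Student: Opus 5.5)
The proof will mirror that of Lemma \ref{lemma:MGF_bound_time_avg_subGauss}. The only change is that the sub-Gaussian MGF bound is replaced by the Bernstein bound \eqref{eq:Bernstein_def}, which restricts the admissible range of $c$. Set $\hat{h}(t,x)\coloneqq h(t,x)-E_{\mu_X^*}[h_t]$.

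First, I note that $\pm c\hat{h}$ still satisfies Assumption \ref{assump:h_assump1}. Subtracting a function of $t$ alone does not affect the Lipschitz-type bound, and multiplying by $c$ only rescales $L_h$. This lets us invoke Theorem 1 of \cite{birrell2026quasistatic}, in the form of Theorem \ref{thm:MGF_bound} in Appendix \ref{app:Feynman_Kac_bound}, for the potential $\pm c\hat{h}$. That result gives
\begin{align}
\log \mathbb{E}\left[\exp\left(\pm c\int_0^T \hat{h}(t,X_t)dt \right)\right]\leq \int_0^TC_*^{-1}\log\left( \int \exp\left(\pm c C_*\hat{h}_t \right) d\mu_{X}^*\right)dt\,,\notag
\end{align}
provided the right-hand side is finite for a.e.\ $t$.

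Second, I apply the Bernstein bound \eqref{eq:Bernstein_def} for $h_t$ under $\mu_X^*$ with $\lambda=\pm cC_*$. This is admissible exactly when $|\lambda|=cC_*<1/b$, i.e.\ $c\in(0,1/(C_*b))$, which is the range in the statement. It yields
\begin{align}
C_*^{-1}\log\int \exp(\pm cC_*\hat{h}_t)\,d\mu_X^*\leq C_*^{-1}\frac{c^2C_*^2\sigma_{h_t}^2}{2(1-cC_*b)}=\frac{c^2C_*\sigma_{h_t}^2}{2(1-cC_*b)}\,.\notag
\end{align}
Integrating over $t\in[0,T]$ gives the claim. The integral is finite because $\sigma_{h_t}\in L^2([0,T],dt)$.

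The main obstacle is checking that the hypotheses of the Feynman--Kac semigroup bound hold. That theorem presumably requires the exponential moments $\int e^{\pm cC_*\hat h_t}d\mu_X^*$ to be finite, and possibly some measurability or integrability of $t\mapsto \log\int e^{\pm cC_*\hat h_t}d\mu_X^*$. The finiteness is guaranteed here only for $cC_*<1/b$, which is precisely why $c$ is restricted. For integrability in $t$, I would use the pointwise bound above together with $\sigma_{h_t}\in L^2$. If Theorem \ref{thm:MGF_bound} requires bounded potentials, I would first truncate $\hat h$ and then pass to the limit by monotone or dominated convergence, as in the sub-Gaussian case.
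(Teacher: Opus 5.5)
Your proposal is correct and is essentially the paper's proof: apply Theorem \ref{thm:MGF_bound} to the potential $\pm c\hat h$, then bound each time slice using the Bernstein MGF bound at $\lambda=\pm cC_*$, which is admissible exactly when $c\in(0,1/(C_*b))$. The extra finiteness and truncation steps you anticipate are unnecessary, because Theorem \ref{thm:MGF_bound} already handles any $h$ satisfying Assumption \ref{assump:h_assump1} through its own truncation argument and allows the right-hand side to equal $+\infty$.
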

\begin{proof}
Define   $\hat{h}(t,x)\coloneqq h(t,x)-E_{\mu_X^*}[h_t]$. Combining the definition \eqref{eq:Bernstein_def} with Theorem 1 from \cite{birrell2026quasistatic},  for all $T>0$ and all $c\in(0,1/(C_*b))$ we have 
\begin{align}
\log \mathbb{E}\left[\exp\left(\pm c \int_0^{T} \hat{h}(t,X_t) dt\right)\right]\leq&  C_*^{-1}\int_0^T\log\left( \int \exp\left(\pm c C_* (h_t-E_{\mu_X^*}[h_t]) \right) d\mu_{X}^*\right)dt\\
\leq&  \frac{ c^2 C_*}{2(1-c C_*b)} \int_0^T \!\sigma_{h_t}^2dt\notag\,.
\end{align}
\end{proof}

\subsubsection{Relative Entropy Bound via Girsanov's Theorem}\label{sec:Girsanov_KL}

To bound the relative entropy term in \eqref{eq:UQ_bound_general}, we   utilize Girsanov's theorem together with the moment bounds  derived in   Section \ref{sec:Z_2n_bound}. We will require the following additional assumptions on the diffusion for $X_t$.
\begin{assumption}\label{assump:sigma_invert} 
 Assume $\sigma$ is invertible and $\|\sigma^{-1}\|_{2,\infty}\coloneqq \sup_{t\geq 0,x\in\mathbb{R}^d}\|\sigma^{-1}(t,x)\|_2<\infty$, 
 where  $\|\cdot\|_2$ denotes the $\ell^2$ matrix-norm.
\end{assumption}

\begin{lemma}\label{lemma:KL_bound}
Under Assumptions \ref{assump:main_assumptions_1} and \ref{assump:sigma_invert},  for all $T>0$ we have the following bound on the relative entropy between the distributions of $\tilde{X}_t$ and $X_t$ on path-space:
\begin{align}
\mathrm{KL}\left(P_{\tilde{X}|_{[0,T]}}\|P_{{X}|_{[0,T]}}\right)\leq& \frac{1}{2}\int_0^T\mathbb{E}\left[ \left\|\sigma^{-1}(t,\tilde{X}_t)\left(F(t,\tilde{X}_t)-G(t,\tilde{X}_t)+\kappa(\tilde{X}_t-Y_t)\right)\right\|^2\right]dt\,.
\end{align}
\end{lemma}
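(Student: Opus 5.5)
The plan is to compare the joint systems rather than the marginals. Let $P$ be the law on $C([0,T],\mathbb{R}^{2d})$ of $(X,\tilde Y)$ solving \eqref{eq:X}--\eqref{eq:tilde_Y}, and $Q$ the law of $(\tilde X,Y)$ solving \eqref{eq:tilde_X}--\eqref{eq:Y}. Both start from the same initial law, since $\tilde X_0=X_0$ and $\tilde Y_0=Y_0$. By the data-processing inequality for the projection onto the first $d$ coordinates,
\[
\mathrm{KL}(P_{\tilde X|_{[0,T]}}\|P_{X|_{[0,T]}})\leq \mathrm{KL}(Q\|P),
\]
so it suffices to bound the joint relative entropy. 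The two systems have the same joint diffusion matrix $(\sigma(t,x),\eta(t,y))$ stacked over the common $W$. The drift difference (the $Q$-drift minus the $P$-drift) evaluated at $(x,y)$ is
\[
\bigl(\sigma(t,x)u_t,\;\eta(t,y)u_t\bigr),\qquad u_t(x,y)\coloneqq -\sigma^{-1}(t,x)\bigl(F(t,x)-G(t,x)+\kappa(x-y)\bigr),
\]
by construction of \eqref{eq:tilde_Y}. Hence a single $d$-dimensional Girsanov shift $u$ converts one system into the other.

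Concretely, start from the solution $(\tilde X,Y)$ of \eqref{eq:tilde_X}--\eqref{eq:Y} on $(\Omega,\mathcal F,\mathbb P)$. Define $\hat W_t\coloneqq W_t-\int_0^t u_s(\tilde X_s,Y_s)ds$ and
\[
\mathcal{E}_T\coloneqq\exp\Bigl(\int_0^T u_s\cdot dW_s-\tfrac12\int_0^T\|u_s\|^2ds\Bigr).
\]
If $\mathcal E$ is a true martingale, then under $d\hat{\mathbb P}=\mathcal E_T d\mathbb P$ the process $\hat W$ is a Brownian motion, and $(\tilde X,Y)$ solves \eqref{eq:X}--\eqref{eq:tilde_Y} driven by $\hat W$. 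By pathwise uniqueness (Assumption \ref{assump:main_assumptions_1}, Remark \ref{remark:existence_uniqueness}) and Yamada--Watanabe, its law under $\hat{\mathbb P}$ is $P$, while its law under $\mathbb P$ is $Q$. Then
\[
\mathrm{KL}(Q\|P)\leq \mathrm{KL}(\mathbb P\|\hat{\mathbb P})=\mathbb E[-\log \mathcal E_T]=\tfrac12\mathbb E\int_0^T\|u_t\|^2dt,
\]
where the stochastic integral has zero mean because $\mathbb E\int_0^T\|u_t\|^2dt<\infty$. This finiteness follows from Assumption \ref{assump:sigma_invert}, the linear growth of $F-G$, and the second moments of $\tilde X,Y$. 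The resulting bound is exactly the claimed one.

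The main obstacle is showing that $\mathcal E$ is a martingale, since $u$ contains the unbounded coupling $\kappa(\tilde X-Y)$ and the unbounded $F-G$. I would use the localized Novikov criterion: it suffices to find a partition $0=t_0<\dots<t_N=T$ with
\[
\mathbb E\Bigl[\exp\Bigl(\tfrac12\int_{t_{i-1}}^{t_i}\|u_s\|^2ds\Bigr)\Bigr]<\infty
\]
(Karatzas--Shreve, Cor. 3.5.14). Bound $\|u_s\|^2\leq C(1+\|\tilde X_s\|^2+\|Z_s\|^2)$ with $Z=\tilde X-Y$. Apply Jensen in time, $\exp(\frac{1}{\Delta}\int_{I}\Delta\, a\,ds)\leq \frac1\Delta\int_I e^{\Delta a}ds$, and Cauchy--Schwarz to separate the two terms. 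It then remains to bound $\sup_s\mathbb E[e^{\lambda\|\tilde X_s\|^2}]$ and $\sup_s\mathbb E[e^{\lambda\|Z_s\|^2}]$ for small $\lambda$. Expanding the exponential and using the factorial moment bounds \eqref{eq:E_Z_2n_final_bound} and \eqref{eq:f_n_bound_X_tilde_final}, namely $\mathbb E\|Z_s\|^{2n}\leq \alpha n!\,\gamma^n$ and similarly for $\tilde X$, the series $\sum_n\lambda^n\gamma^n$ converges whenever $\lambda\gamma<1$, uniformly in $s$. Choosing the mesh $\Delta$ small enough that $\lambda=C\Delta$ meets this requirement completes the Novikov step, and hence the proof.
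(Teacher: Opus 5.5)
Your proposal is correct and follows essentially the same route as the paper: a single Girsanov change of measure on the coupled system $(\tilde X,Y)$, the data-processing inequality to pass to the first marginal, and a localized Novikov condition checked via the factorial moment bounds \eqref{eq:E_Z_2n_final_bound} and \eqref{eq:f_n_bound_X_tilde_final}. There is one sign slip. Starting from $(\tilde X,Y)$, the exponent and the shift must use $-u$ (the paper's $b$), i.e.\ $\hat W_t=W_t+\int_0^t u_s\,ds$; as you wrote it, the new drift of $\tilde X$ would be $2G-F-2\kappa(\tilde X-Y)$ rather than $F$. Since everything downstream depends only on $\|u\|^2$, the resulting bound is unchanged.
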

\begin{proof}

We will apply Girsanov's theorem to the system of SDEs \eqref{eq:tilde_X}\,-\,\eqref{eq:Y} with the change of drift induced by the martingale
\begin{align}\label{eq:N_t_def}
&N_t\coloneqq\exp\left(\int_0^t b(s,\tilde{X}_s,Y_s) \cdot dW_s-\frac{1}{2}\int_0^t \|b(s,\tilde{X}_s,Y_s)\|^2ds\right)\,,\\
&b(t,\tilde{x},y)\coloneqq \sigma^{-1}(t,\tilde{x})(F(t,\tilde{x})-G(t,\tilde{x})+\kappa(\tilde{x}-y))\,.\label{eq:b_vf_def}
\end{align}
This will result in a solution to the   SDE system with the same diffusion and modified drift   
\begin{align}
(t,\tilde{x},y)\mapsto& \begin{pmatrix}
G(t,\tilde{x})-\kappa(\tilde{x}-y)+\sigma(t,\tilde{x})b(t,\tilde{x},y)\\
G(t,y)+\eta(t,y)b(t,\tilde{x},y)
\end{pmatrix}\,,
\end{align}
which simplifies to give the drift for the system \eqref{eq:X}\,-\,\eqref{eq:tilde_Y}.  The latter system has the same diffusion as \eqref{eq:tilde_X}\,-\,\eqref{eq:Y}, hence Girsanov's theorem will imply absolute continuity of the path-space distributions of the   solutions to these two systems and will also allow us compute their relative entropy.

To justify the use of Girsanov's theorem we prove that $N_t$ is a martingale using the variant of Novikov's condition given in Corollary 5.14 on page 199 of \cite{karatzas2014brownian}.  For   $\Delta t>0$ define $t_j=j\Delta t$, $j\in\mathbb{Z}^+$, and use the Cauchy-Schwarz inequality to compute 
\begin{align}
&\mathbb{E}\left[\exp\left(\frac{1}{2}\int_{t_{j-1}}^{t_j}\|b(s,\tilde{X}_s,Y_s)  \|^2 ds\right)\right]\\
\leq &\mathbb{E}\left[\exp\left(2\| \sigma^{-1}\|_{2,\infty}^2\int_{t_{j-1}}^{t_j}\|F(s,\tilde{X}_s)-G(s,\tilde{X}_s)\|^2ds\right)\right]^{1/2}\notag\\
&\times\mathbb{E}\left[ \exp\left(2\kappa^2\| \sigma^{-1}\|_{2,\infty}^2\int_{t_{j-1}}^{t_j}\|Z_s\|^2 ds\right)\right]^{1/2}\notag\,.
\end{align}
We need to prove that both of these terms are finite for appropriately chosen $\Delta t$. 

By Assumption \ref{assump:main_assumptions_1}\,.\,\ref{assump:existence_uniqueness}, $F$ and $G$ are linearly bounded in $x$, uniformly in $t$, hence there exists $\tilde{A}$, $\tilde{B}\geq 0$ such that
\begin{align}
&\mathbb{E}\left[\exp\left(2\| \sigma^{-1}\|_{2,\infty}^2\int_{t_{j-1}}^{t_j}\|F(s,\tilde{X}_s)-G(s,\tilde{X}_s)\|^2ds\right)\right]\\
\leq&e^{\tilde{A} \Delta t}\mathbb{E}\left[\exp\left( \tilde{B}\int_{t_{j-1}}^{t_j} \|\tilde{X}_s\|^2ds\right)\right]\leq e^{\tilde{A} \Delta t}\sum_{n=0}^\infty  \frac{\tilde{B}^n \Delta t^{n-1}}{n!} \int_{t_{j-1}}^{t_j}\mathbb{E}\left[  \|\tilde{X}_s\|^{2n}\right]ds\notag\,, 
\end{align}
where we again used the Cauchy-Schwarz inequality to obtain the last line.
Now apply the bound \eqref{eq:f_n_bound_X_tilde_final}, which holds for all $n$  (note that it trivially holds for $n=0$) to compute
\begin{align}
&\mathbb{E}\left[\exp\left(2\| \sigma^{-1}\|_{2,\infty}^2\int_{t_{j-1}}^{t_j}\|F(s,\tilde{X}_s)-G(s,\tilde{X}_s)\|^2ds\right)\right]\\
\leq &e^{\tilde{A} \Delta t} \left(\tilde{\alpha} + \alpha /2  \right) \sum_{n=0}^\infty     \left(\Delta t \tilde{B}\left(\frac{D_{\sigma,G}  }{ B_G}+\max\left\{\tilde{\beta}  ,  (2\kappa/B_G)^{2 }  \left(\beta  + \frac{D_{\sigma,\eta}}{2(\kappa+C_G-L_{\tilde{G}})} \right)\right\}\right)\right)^n<\infty\notag
\end{align}
for all $j$ when  $\Delta t$ is sufficiently small.

Similarly, we can use the bound   \eqref{eq:E_Z_2n_final_bound} (note that it trivially holds for $n=0$) to compute
\begin{align}
&\mathbb{E}\left[ \exp\left(2\kappa^2\| \sigma^{-1}\|_{2,\infty}^2\int_{t_{j-1}}^{t_j}\|Z_s\|^2 ds\right)\right]\\
\leq& \alpha\sum_{n=0}^\infty\left(2\Delta t\kappa^2\| \sigma^{-1}\|_{2,\infty}^2  \left(\beta  + \frac{D_{\sigma,\eta}}{2(\kappa+C_G-L_{\tilde{G}})} \right)\right)^n<\infty\notag
\end{align}
for all $j$ when  $\Delta t$ is sufficiently small.

Hence, for $\Delta t$ sufficiently small we can conclude that
\begin{align}
\mathbb{E}\left[\exp\left(\frac{1}{2}\int_{t_{j-1}}^{t_j}\|b(s,\tilde{X}_s,Y_s)  \|^2 ds\right)\right]<\infty
\end{align}
for all $j$. 
 Finiteness of these expectations along with the fact that $t_j\nearrow\infty$ as $j\to\infty$ implies  \eqref{eq:N_t_def}
is a   martingale (again, see Corollary 5.14 on page 199 of \cite{karatzas2014brownian}). Hence we are justified in using Girsanov's theorem,  e.g., see Section 3.5 in \cite{karatzas2014brownian}, which  implies that $(\tilde{X}_t,Y_t)|_{t\in[0,T]}$ is a weak solution to the system  \eqref{eq:X}-\eqref{eq:tilde_Y}  with respect to the measure $d\widetilde{\mathbb{P}}_T\coloneqq N_T d\mathbb{P}$ (and driven by an appropriate $\widetilde{\mathbb{P}}_T$-Wiener process, $\tilde{W}_t$, in place of $W_t$).  As $(X_t,\tilde{Y}_t)$ is also a solution to this SDE (with respect to  the $\mathbb{P}$-Wiener process $W_t$) and  the initial distributions agree, uniqueness in law  implies equality of the corresponding distributions on path-space up to time $T$, i.e.,
\begin{align}\label{eq:unique_in_dist}
((X,\tilde{Y})|_{[0,T]})_\#\mathbb{P}=((\tilde{X},Y)|_{[0,T]})_\#\widetilde{\mathbb{P}}_T\,,
\end{align}
where  $\phi_\#\nu$ denotes the pushforward of a measure $\nu$ by a measurable map $\phi$.

The distributions $P_{\tilde{X}|_{[0,T]}}$ and $P_{{X}|_{[0,T]}}$ are the first marginals of   $((\tilde{X},Y)|_{[0,T]})_\#\mathbb{P}$ and $((X,\tilde{Y})|_{[0,T]})_\#\mathbb{P}$ respectively. The KL-divergence data processing inequality implies that marginalization can only reduce the relative entropy, hence we can compute 
\begin{align}
\mathrm{KL}\left(P_{\tilde{X}|_{[0,T]}}\|P_{{X}|_{[0,T]}}\right)\leq& \mathrm{KL}\left(\mathbb{P}\| \widetilde{\mathbb{P}}_T\right)=\frac{1}{2}\int_0^T\mathbb{E}\left[ \|b(s,\tilde{X}_s,Y_s)\|^2\right]ds\,,
\end{align}
where we used  \eqref{eq:unique_in_dist}, the definitions of $\widetilde{\mathbb{P}}_T$ and the KL-divergence, along with the fact that $\mathbb{E}[\int_0^t \|b(s,\tilde{X}_s,Y_s)\|^2 ds]<\infty$ for all $t$ and hence $\int_0^t b(s,\tilde{X}_s,Y_s)\cdot  dW_s$ is a $\mathbb{P}$-martingale.  Substituting in the definition of $b$, Eq.~\eqref{eq:b_vf_def}, we arrive at the claimed result.
\end{proof}

\section{Conclusion}
In this work we have used a combination of stochastic calculus estimates, functional inequalities, and information-theoretic UQ bounds to derive novel sensitivity bounds on the solutions to  SDEs, allowing for  perturbation of both the diffusion and the drift.  To the best of the author's knowledge, these results are the first of this type that allow for perturbation of the diffusion and which produce results that are well-behaved at large times; in particular, previous information-theoretic sensitivity bounds    only apply to perturbations to the drift.  Several applications of our new method were studied: 1) sensitivity bounds on time-averaged and exponentially discounted observables,  2) bounds on the $1$-Wasserstein distance between invariant distributions of the baseline and perturbed SDEs, 3) sensitivity bounds on solutions to linear parabolic PDEs.   By examining invariant distributions of  OU processes, we showed that our method produces bounds that scale optimally in the size of the perturbation to the diffusion and to the drift.  Currently, the method is restricted to the case of a uniformly-elliptic baseline SDE; the investigation of  more general cases   (e.g., underdamped Langevin dynamics) is left for future work.

\appendix
\section{Additional Proofs}
Below we provide additional  proof details regarding several results discussed in the main text.

\subsection{Theorem \ref{thm:bounds_time_avg}: Bernstein MGF Bound Case}\label{app:time_avg_sub_exp}
In this section we provide details regarding the proof of \eqref{eq:bounds_time_avg_thm_subexp_general}  from Theorem \ref{thm:bounds_time_avg} under Bernstein MGF bound assumptions.

Starting from the UQ bound \eqref{eq:UQ_bound_X_tilde_X_time_avg}, we  apply the Bernstein MGF bound from Lemma \ref{lemma:MGF_bound_time_avg_subexp} to obtain

\begin{align}
&\left|\mathbb{E}\left[\frac{1}{T}\int_0^T h(t,\tilde{X}_t)dt\right]- \frac{1}{T}\int_0^T E_{\mu_X^*}[h_t] dt \right|\\
\leq& \inf_{c\in(0,1/(C_*b)}\left\{\frac{1}{T} \frac{ c C_*}{2(1-c C_*b)} \int_0^T \sigma_{h_t}^2dt+\frac{1}{cT}\mathrm{KL}\left(P_{\tilde{X}|_{[0,T]}}\|P_{{X}|_{[0,T]}}\right)\right\}\,.\notag
\end{align}
The optimization over $c$ can be computed exactly to yield 
\begin{align}
&\left|\mathbb{E}\left[\frac{1}{T}\int_0^T h(t,\tilde{X}_t)dt\right]- \frac{1}{T}\int_0^T E_{\mu_X^*}[h_t] dt \right|\\
\leq& \left(2     C_* \frac{1}{T}\int_0^T \sigma_{h_t}^2dt \frac{1}{T} \mathrm{KL}\left(P_{\tilde{X}|_{[0,T]}}\|P_{{X}|_{[0,T]}}\right)\right)^{1/2} +\frac{C_*b}{T}\mathrm{KL}\left(P_{\tilde{X}|_{[0,T]}}\|P_{{X}|_{[0,T]}}\right)\,.\notag
\end{align}
Combining this with \eqref{eq:time_avg_proof_X_tilde_Y_step} and \eqref{eq:time_avg_proof_KL_step} completes the proof.

 \subsection{Bounds on $\tilde X_t$}\label{app:tilde_X_2n_bound}   

In addition to the   bounds on  the moments of $Z_t=\tilde X_t-Y_t$ that were derived in Section \ref{sec:Z_2n_bound}, we will require bounds on the moments of $\tilde X_t$; we continue to work under 
 Assumption \ref{assump:main_assumptions_1}. Similarly to the computations in Section \ref{sec:Z_2n_bound}, for $\tilde{K}\in\mathbb{R}$, $n\in\mathbb{Z}^+$ we can use It{\^o}'s formula along with Assumption  \ref{assump:main_assumptions_1}\,.\,\ref{assump:G_confining}   to compute
\begin{align}\label{eq:Ito_tilde_X_app}
    &e^{\tilde{K}t}\|\tilde X_t\|^{2n}\\
=&\|\tilde X_0\|^{2n}+\int_0^t \tilde{K}e^{\tilde{K}s} \|\tilde X_s\|^{2n}ds +2n\int_0^te^{\tilde{K}s}\|\tilde X_s\|^{2(n-1)}\tilde X_s\cdot\sigma(s,\tilde X_s)dW_s\notag\\
&+2n\int_0^te^{\tilde{K}s}\|\tilde X_s\|^{2(n-1)}\tilde X_s\cdot (G(s,\tilde X_s)-\kappa(\tilde X_s-Y_s))ds\notag\\
&+\frac{1}{2}\sum_{i,j,k}\int_0^t e^{\tilde{K}s}\left(2n(n-1)\|\tilde X_s\|^{2(n-2)}2 \tilde X_s^j\tilde X_s^i+2n\|\tilde X_s\|^{2(n-1)}\delta_{ij}\right)   \sigma_k^i(s,\tilde X_s)\sigma_k^j(s,\tilde X_s) ds\notag\\
\leq&\|\tilde X_0\|^{2n}+\int_0^t \tilde{K}e^{\tilde{K}s} \|\tilde X_s\|^{2n}ds+2n\int_0^te^{\tilde{K}s}\|\tilde X_s\|^{2(n-1)}\tilde X_s\cdot\sigma(s,\tilde X_s)dW_s\notag\\
&+2n\int_0^te^{\tilde{K}s}\|\tilde X_s\|^{2(n-1)}( A_G-B_G\|\tilde{X}_s\|^2+\kappa \|\tilde X_s\|\|Z_s\|)ds\notag\\
&+\frac{1}{2}\sum_{i,j,k}\int_0^t e^{\tilde{K}s}\left(2n(n-1)\|\tilde X_s\|^{2(n-2)}2 \tilde X_s^j\tilde X_s^i+2n\|\tilde X_s\|^{2(n-1)}\delta_{ij}\right)   \sigma_k^i(s,\tilde X_s)\sigma_k^j(s,\tilde X_s) ds\notag\,.
\end{align}
Moreover,
\begin{align}
    \mathbb{E}\left[\int_0^t\left(e^{\tilde{K}s}\|\tilde X_s\|^{2(n-1)}\|\sigma^T(s,\tilde X_s)\tilde X_s\|\right)^2ds\right]<\infty\,,
\end{align}
for all $t$, which implies $\int_0^te^{\tilde{K}s}\|\tilde X_s\|^{2(n-1)}\tilde X_s\cdot\sigma(s,\tilde X_s)dW_s$ is a martingale.

Specializing   \eqref{eq:Z_2n_Ito} to $n=1$, taking the expectation of both sides and using the martingale property, we   obtain
\begin{align}
    e^{\tilde{K}t}\mathbb{E}[\|\tilde X_t\|^{2}]
\leq &\mathbb{E}[\|\tilde X_0\|^{2}]+\int_0^t \tilde{K}e^{\tilde{K}s} \mathbb{E}[\|\tilde X_s\|^{2}]ds+\int_0^t e^{\tilde{K}s}\mathbb{E}\left[\|\sigma(s,\tilde X_s)\|_F^2    \right]ds\notag\\
&+2\int_0^te^{\tilde{K}s}(A_G-B_G\mathbb{E}[\|\tilde X_s\|^2])ds+2\kappa\int_0^te^{\tilde{K}s}\mathbb{E}[\|\tilde X_s\|\|Z_s\|]ds \notag\,.
\end{align}
With $\delta$ chosen according to Assumption \ref{assump:main_assumptions_1}\,.\,\ref{assump:delta} and letting $\tilde{K}=\tilde{K}_{\kappa,\delta}$ we can compute
\begin{align}
&    e^{\tilde{K}_{\kappa,\delta}t}\mathbb{E}[\|\tilde X_t\|^{2}]\\
\leq &\mathbb{E}[\|\tilde X_0\|^{2}]+(\tilde{K}_{\kappa,\delta}-2B_G + \kappa\delta)\int_0^t e^{\tilde{K}_{\kappa,\delta}s} \mathbb{E}[\|\tilde X_s\|^{2}]ds\notag\\
&+\int_0^t e^{\tilde{K}_{\kappa,\delta}s}\mathbb{E}\left[\|\sigma(s,\tilde X_s)\|_F^2    \right]ds+2A_G\int_0^te^{\tilde{K}_{\kappa,\delta}s} ds+\kappa\delta^{-1}\int_0^te^{\tilde{K}_{\kappa,\delta}s}\mathbb{E}[\|Z_s\|^2]ds \notag\\
\leq &\mathbb{E}[\|\tilde X_0\|^{2}]+(2A_G+ \|\sigma\|_{F,\infty}^2    )\frac{e^{\tilde{K}_{\kappa,\delta}t}-1}{\tilde{K}_{\kappa,\delta}}+\kappa\delta^{-1}\int_0^te^{\tilde{K}_{\kappa,\delta}s}\mathbb{E}[\|Z_s\|^2]ds \,,\notag
\end{align}
and hence
\begin{align}
    \mathbb{E}[\|\tilde X_t\|^{2}]
 \leq &\mathbb{E}[\|\tilde X_0\|^{2}] e^{-\tilde{K}_{\kappa,\delta}t} +(2A_G+ \|\sigma\|_{F,\infty}^2    )\frac{1-e^{-\tilde{K}_{\kappa,\delta}t}}{\tilde{K}_{\kappa,\delta}}\\
&+\kappa\delta^{-1}e^{-\tilde{K}_{\kappa,\delta}t}\int_0^te^{\tilde{K}_{\kappa,\delta}s}\mathbb{E}[\|Z_s\|^2]ds \,.\notag
\end{align}
Using \eqref{eq:E_Z_2_bound_uniform} and Assumption \ref{assump:main_assumptions_1}\,.\,\ref{assump:ICS} we obtain
\begin{align}\label{eq:E_X_2_bound_app}
    &\mathbb{E}[\|\tilde X_t\|^{2}]\\
 \leq &\mathbb{E}[\| X_0\|^{2}] e^{-\tilde{K}_{\kappa,\delta}t} +\left(2A_G+ \|\sigma\|_{F,\infty}^2    +\frac{\kappa}{\delta } \left(\mathbb{E}[\|Z_0\|^{2}] +\frac{(1+1/\epsilon)\|\sigma-\eta\|_{F,\infty}^2}{K_{\kappa,\epsilon}} \right) \right)\frac{1-e^{-\tilde{K}_{\kappa,\delta}t}}{\tilde{K}_{\kappa,\delta}}\,.\notag
\end{align}

As was the case for  $Z_t$, we will also require control of higher moments in order to justify our use of Girsanov's theorem in Section \ref{sec:Girsanov_KL}. By  taking the expectation of  \eqref{eq:Ito_tilde_X_app} and using the martingale property we can compute
\begin{align} 
    &e^{\tilde{K}t}\mathbb{E}[\|\tilde X_t\|^{2n}]\\
\leq&\mathbb{E}[\|\tilde X_0\|^{2n}]+\int_0^t \tilde{K}e^{\tilde{K}s} \mathbb{E}[\|\tilde X_s\|^{2n}]ds \notag\\
&+2n\int_0^te^{\tilde{K}s}\mathbb{E}[\|\tilde X_s\|^{2(n-1)}( A_G-B_G\|\tilde{X}_s\|^2+\kappa \|\tilde X_s\|\|Z_s\|)]ds\notag\\
&+\frac{1}{2}\sum_{i,j,k}\int_0^t e^{\tilde{K}s}\mathbb{E}[\left(2n(n-1)\|\tilde X_s\|^{2(n-2)}2 \tilde X_s^j\tilde X_s^i+2n\|\tilde X_s\|^{2(n-1)}\delta_{ij}\right)   \sigma_k^i(s,\tilde X_s)\sigma_k^j(s,\tilde X_s)]ds\notag\\
\leq&\mathbb{E}[\|\tilde X_0\|^{2n}]+\int_0^t \tilde{K}e^{ \tilde{K}s} \mathbb{E}[\|\tilde X_s\|^{2n}]ds\notag\\
&+2n\int_0^te^{ \tilde{K}s} ( A_G\mathbb{E}[\|\tilde X_s\|^{2(n-1)}]-B_G\mathbb{E}[\|\tilde X_s\|^{2n}]+\kappa\mathbb{E}[\|\tilde X_s\|^{2n-1}\|Z_s\|])ds\notag\\
&+\left(2n(n-1)\|\sigma\|^2_{2,\infty} +n\|\sigma\|_{F,\infty}^2 \right)\int_0^t e^{ \tilde{K}s} \mathbb{E}\left[\|\tilde X_s\|^{2(n-1)}\right] ds\,.\notag
\end{align}
Using  Young's inequality with $p=2n$, $1/q=1-1/2n=(2n-1)/2n$, for all $\epsilon>0$  we can further compute
\begin{align}
    e^{ \tilde{K}t}\mathbb{E}[\|\tilde X_t\|^{2n}]
\leq&\mathbb{E}[\|\tilde X_0\|^{2n}]+\kappa\epsilon^{-(2n-1)}\int_0^te^{ \tilde{K}s}\mathbb{E}[\|Z_s\|^{2n}]ds\\
&-(2nB_G+\kappa\epsilon-2n\kappa\epsilon- \tilde{K})\int_0^te^{ \tilde{K}s} \mathbb{E}[\|\tilde X_s\|^{2n}]ds\notag\\
&+\left(2n(n-1)\|\sigma\|^2_{2,\infty} +n\|\sigma\|_{F,\infty}^2+2n A_G \right)\int_0^t e^{ \tilde{K}s} \mathbb{E}\left[\|\tilde X_s\|^{2(n-1)}\right] ds\,.\notag
\end{align}
Let $\epsilon=B_G/(2\kappa)$ and $ \tilde{K}=nB_G$.  Fixing $T>0$, for $t\in[0,T]$ the above implies
\begin{align}\label{eq:tilde_X_2n_bound_recursive}
    e^{nB_Gt}\mathbb{E}[\|\tilde X_t\|^{2n}]
\leq&\mathbb{E}[\|\tilde X_0\|^{2n}]+\kappa(2\kappa/B_G)^{2n-1}\int_0^Te^{nB_Gs}\mathbb{E}[\|Z_s\|^{2n}]ds\\
&+n^2D_{\sigma,G}\int_0^t e^{nB_Gs} \mathbb{E}\left[\|\tilde X_s\|^{2(n-1)}\right] ds\,,\notag
\end{align}
where $D_{\sigma,G}\coloneqq  2\|\sigma\|^2_{2,\infty}+\|\sigma\|_{F,\infty}^2+2A_G$.
The bound \eqref{eq:tilde_X_2n_bound_recursive} has the form
\begin{align}
    &f_n(t)\leq b_n+a_n\int_0^t e^{\tau s} f_{n-1}(s) ds\,,\,\,t\in[0,T]\,.
\end{align}
 where   
\begin{align}
f_n(t)\coloneqq &e^{n\tau t}\mathbb{E}[\|\tilde X_t\|^{2n}]\,,\,\,\,   \tau\coloneqq B_G\,,\\
b_n\coloneqq&\mathbb{E}[\|\tilde X_0\|^{2n}]+\kappa(2\kappa/B_G)^{2n-1}\int_0^Te^{nB_Gs}\mathbb{E}[\|Z_s\|^{2n}]ds\,,\\
a_n\coloneqq &n^2D_{\sigma,G}\,.
\end{align}
  Therefore we can  apply Lemma \ref{lemma:f_n_bound}, which yields the bound
\begin{align}\label{eq:f_n_bound_X_tilde}
e^{nB_G t}\mathbb{E}[\|\tilde X_t\|^{2n}]\leq& \sum_{j=0}^n \frac{\prod_{k=j+1}^n a_k}{(n-j)! \tau^{n-j}} b_j  e^{(n-j)\tau t} \\
=& \sum_{j=0}^n \frac{(n!/j!)^2}{(n-j)!} (D_{\sigma,G} e^{ B_G t}/ B_G)^{n-j}b_j  \notag
\end{align}
for all $t\in[0,T]$, where $b_0\coloneqq 1$.  Using Assumption \ref{assump:main_assumptions_1}\,.\,\ref{assump:ICS}  along with \eqref{eq:E_Z_2n_final_bound}, for $n\in\mathbb{Z}^+$  we can compute
\begin{align}
b_n\leq & \tilde{\alpha}\tilde{\beta}^n n!+\kappa \alpha n! (2\kappa/B_G)^{2n-1} \left(\beta  + \frac{D_{\sigma,\eta}}{2(\kappa+C_G-L_{\tilde{G}})} \right)^n\int_0^Te^{nB_Gs}ds \\
\leq &n!\left(\tilde{\alpha}\tilde{\beta}^n + \frac{1}{2}\alpha    \left( (2\kappa/B_G)^{2 }e^{ B_GT} \left(\beta  + \frac{D_{\sigma,\eta}}{2(\kappa+C_G-L_{\tilde{G}})} \right)\right)^n \right) \notag\\
\leq &n!\left(\tilde{\alpha} +  \alpha /2  \right)\max\left\{\tilde{\beta},  (2\kappa/B_G)^{2 }e^{ B_GT} \left(\beta  + \frac{D_{\sigma,\eta}}{2(\kappa+C_G-L_{\tilde{G}})} \right)\right\}^n\notag
\end{align}
 (note that the final line also bounds $b_0$ when $n=0$).  Combining this with \eqref{eq:f_n_bound_X_tilde} we obtain
\begin{align}
&e^{nB_G t}\mathbb{E}[\|\tilde X_t\|^{2n}]\\
\leq& n! \left(\tilde{\alpha} + \alpha /2  \right)   \sum_{j=0}^n \frac{n! }{j!(n-j)!} \left(\frac{D_{\sigma,G} e^{ B_G t}}{ B_G}\right)^{n-j}\notag\\
&\qquad\qquad\qquad\times \max\left\{\tilde{\beta},  (2\kappa/B_G)^{2 }e^{ B_GT} \left(\beta  + \frac{D_{\sigma,\eta}}{2(\kappa+C_G-L_{\tilde{G}})} \right)\right\}^j \notag\\
=&n! \left(\tilde{\alpha} + \alpha /2  \right) \left(\frac{D_{\sigma,G} e^{ B_G t}}{ B_G}+\max\left\{\tilde{\beta},  (2\kappa/B_G)^{2 }e^{ B_GT} \left(\beta  + \frac{D_{\sigma,\eta}}{2(\kappa+C_G-L_{\tilde{G}})} \right)\right\}\right)^n\notag
\end{align}
 for all $t\in[0,T]$. Dividing by $e^{nB_G t}$, letting $t=T$, and recalling that $T>0$ was arbitrary, we find
\begin{align}\label{eq:f_n_bound_X_tilde_final_app}
\mathbb{E}[\|\tilde X_t\|^{2n}]
\leq&  n!  \left(\tilde{\alpha} + \alpha /2  \right) \left(\frac{D_{\sigma,G}  }{ B_G}+\max\left\{\tilde{\beta} e^{-B_G t},  (2\kappa/B_G)^{2 }  \left(\beta  + \frac{D_{\sigma,\eta}}{2(\kappa+C_G-L_{\tilde{G}})} \right)\right\}\right)^n\\
\leq&  n!  \left(\tilde{\alpha} + \alpha /2  \right) \left(\frac{D_{\sigma,G}  }{ B_G}+\max\left\{\tilde{\beta} ,  (2\kappa/B_G)^{2 }  \left(\beta  + \frac{D_{\sigma,\eta}}{2(\kappa+C_G-L_{\tilde{G}})} \right)\right\}\right)^n\notag
\end{align}
 for all $t>0$, $n\in\mathbb{Z}^+$.

\subsection{Bound on the Feynman-Kac Semigroup with a Time-Dependent Potential}\label{app:Feynman_Kac_bound}

In this section we derive a bound on the Feynman-Kac semigroup with a time-dependent potential via a streamlined version of the argument used in \cite{birrell2026quasistatic} to study concentration inequalities for time-inhomogeneous systems; see   Theorem 1 therein.  Specifically, we use the Feynman-Kac formula, i.e., that  (under appropriate assumptions)
\begin{align}
u^f_T(t,x)= \mathbb{E}\left[ f(X^{t,x}_T)\exp\left(\int_t^T h(s, X_s^{t,x})ds\right)\right] 
\end{align}
 solves the PDE
\begin{align} 
&\partial_t u^f_T(t,x)=-A_{X}[u^f_T(t,\cdot)](t,x)-h(t,x)  u^f_T(t,x)\,,\,\,\,\,u^f_T(T,x)=f(x)\,,
\end{align}
 together with a $\log$-Sobolev inequality for the generator, $A_X$. We note that related techniques have also previously been used  to derive concentration inequalities for Markov processes   for   time-independent  $h(x)$; see \cite{WU2000435,doi:10.1137/S0040585X97986667,cattiaux_guillin_2008,birrell2025concentration}.
\begin{theorem}\label{thm:MGF_bound}
 Under Assumptions  \ref{assump:main_assumptions_1}, \ref{assump:h_assump1} and \ref{assump:main_assumptions_2},  for all $T>0$ we have the following bound on the MGF:
\begin{align}\label{eq:MGF_bound}
&\mathbb{E}\left[\exp\left(\int_0^{T} h(t,X_t) dt\right)\right]\leq \exp\left(C_*^{-1}\int_0^T\log\left( \int e^{C_* h_t } d\mu_{X}^*\right)dt\right)\,,
\end{align}
where  $h_t\coloneqq h(t,\cdot)$.
\end{theorem}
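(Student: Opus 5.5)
We will prove \eqref{eq:MGF_bound} by the standard Feynman--Kac/$\log$-Sobolev route of \cite{WU2000435,birrell2026quasistatic}, run backwards in time. Fix $T>0$ and consider
\[
u(t,x)\coloneqq \mathbb{E}\left[\exp\left(\int_t^T h(s,X^{x}_{s-t})ds\right)\right],
\]
which, by time-homogeneity of \eqref{eq:X} and the Feynman--Kac formula (applied with $f\equiv1$), solves $\partial_t u=-A_X[u_t]-h_t u$ with $u(T,\cdot)=1$. By Assumption \ref{assump:main_assumptions_2}.\ref{assump:inv_dist} and the Markov property, the left-hand side of \eqref{eq:MGF_bound} equals $\int u(0,x)\mu_X^*(dx)$.

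The key quantity will be $\Phi(t)\coloneqq \log\|u_t\|_{L^2(\mu_X^*)}^2$, or equivalently $\|u_t\|_{L^p}$ for a suitable exponent. Since $\int u_0\,d\mu_X^*\le \|u_0\|_{L^1(\mu_X^*)}$, it suffices to control a norm of $u_0$. We differentiate $\|u_t\|_2^2$ in $t$:
\[
-\tfrac{d}{dt}\int u_t^2d\mu_X^*=2\int u_tA_X[u_t]d\mu_X^*+2\int h_tu_t^2d\mu_X^*.
\]
We then bound $\int h_t u_t^2$ using the entropy (Gibbs) variational inequality: with $g=u_t/\|u_t\|_2$ we have $\int h_t g^2\,d\mu\le C_*^{-1}\big(\int g^2\log g^2\,d\mu+\log\int e^{C_*h_t}d\mu\big)$. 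The $\log$-Sobolev inequality \eqref{eq:log_Sob_def} then absorbs the entropy term: $C_*^{-1}\int g^2\log g^2\le -\int A_X[g]g$, and this cancels against the Dirichlet-form term. The factor $2$ will require working with the correct $L^p$ norm rather than $L^2$. The natural choice is $v_t=u_t^{1/2}$, or equivalently using $\|u\|_{L^1}$ together with the substitution $g=\sqrt{u}$. We will adjust the computation so that the constants combine to give exactly $-\tfrac{d}{dt}\log(\cdot)\le C_*^{-1}\log\int e^{C_*h_t}d\mu_X^*$.

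Concretely, we plan to set $w_t=\int u_t\,d\mu_X^*$ and use the fact that $\int A_X[u_t]d\mu_X^*=0$ by invariance, which gives $-\dot w_t=\int h_tu_t\,d\mu$. The Gibbs inequality applied to the probability density $u_t/w_t$ introduces $\int u_t\log(u_t/w_t)$, which is \emph{not} controlled by the $L^1$ evolution alone. This is why we will instead work with $\|u_t\|_{L^2}$, where the $\log$-Sobolev inequality applies directly with $g=u_t/\|u_t\|_2$:
\[
-\tfrac{d}{dt}\log\|u_t\|_2^2\le 2\left(C_*^{-1}\int g^2\log g^2+\int A_Xg\,g\right)+2C_*^{-1}\log\!\int e^{C_*h_t}\,d\mu \le 2C_*^{-1}\log\!\int e^{C_*h_t}\,d\mu.
\]
The loss of a factor $2$ is inconsequential if we instead apply the argument to $h/2$ — or, more cleanly, we rescale. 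Here we will follow exactly the bookkeeping of \cite{birrell2026quasistatic}. Integrating from $0$ to $T$ and using $\int u_0\le\|u_0\|_2$ will give the claim.

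The main obstacle is technical justification. We must show that $u_t$ is $C^{1,2}$ with polynomially bounded derivatives, so that it lies in $C^2_{pb}$ and the $\log$-Sobolev inequality and the differentiation under the integral are legitimate. This matters because $h$ may be unbounded (only locally Lipschitz with linear growth per Assumption \ref{assump:h_assump1}), so the exponential moments need not be finite a priori. We will handle this by first truncating $h$ to bounded smooth $h^{(m)}$ and mollifying, proving the bound there, and then passing to the limit via monotone convergence and Fatou's lemma on both sides. The moment bounds of Remark \ref{remark:existence_uniqueness} supply the integrability needed for the interchanges.
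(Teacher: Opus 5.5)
Your proposal follows the paper's proof: the Feynman--Kac representation, differentiating $\|u_t\|_{L^2(\mu_X^*)}^2$, combining the $\log$-Sobolev inequality with the Gibbs variational formula, Gr\"onwall together with $\int u_0\,d\mu_X^*\le\|u_0\|_{L^2(\mu_X^*)}$, and a truncation argument. In that truncation step, the right-hand side needs dominated convergence with the bound $\log\int e^{C_*(h_{\ell,m})_t}d\mu_X^*\le C_*m$ and the comparison $\min(h,m)\le h$, as in the paper, because ordinary Fatou goes the wrong way there.

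There is no factor of $2$ to recover, though. Integrating your displayed inequality for $\log\|u_t\|_2^2$ over $[0,T]$ and taking a square root gives exactly $\|u_0\|_{L^2(\mu_X^*)}\le\exp\bigl(C_*^{-1}\int_0^T\log\int e^{C_*h_t}\,d\mu_X^*\,dt\bigr)$. So drop the detours through $h/2$, $u^{1/2}$ or other $L^p$ norms; running the argument on $h/2$ would prove a different inequality.
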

\begin{proof} 

Start by letting let $h\in C^\infty_c(\mathbb{R}^{1+d})$, $f\in C^\infty_c(\mathbb{R}^d)$ (smooth functions with compact support). Existence of a $C^{1,2}$ classical solution to the 
\begin{align}\label{eq:Feynman_Kac_pde_app}
&\partial_t u^f_T(t,x)=-A_{X}[u^f_T(t,\cdot)](t,x)-h(t,x)  u^f_T(t,x)\,,\,\,\,\,u^f_T(T,x)=f(x)\,,
\end{align}
with $A_X$ as defined in \eqref{eq:gen_def}, such that  $u^f_T$ is bounded, and  $\partial_t u^f_T$, $D_x u^f_T$,  $D_x^2 u^f_T$ are all polynomially bounded in $x$ uniformly in $t\in[0,T]$ can be proven by PDE techniques; see Theorem 2.8 and Corollary 4.2 of \cite{krylov2009elliptic} (note that one can convert to the case where $\sup h<0$  by multiplying $u^f_T(t,x)$ by $e^{c(T-t)}$ for appropriate $c\in\mathbb{R}$).  Using the Feynman-Kac formula,   see,  e.g.,   Theorem 7.6 in Chapter 5 of  \cite{karatzas2014brownian}, we obtain a   representation    in terms of  $X^{t,x}_s$,  the solution to \eqref{eq:X} starting at $x\in\mathbb{R}^d$ at time $t$:
\begin{align}\label{eq:Feynman_Kac_formula_app}
u^f_T(t,x)= \mathbb{E}\left[ f(X^{t,x}_T)\exp\left(\int_t^T h(s, X_s^{t,x})ds\right)\right]\,.
\end{align}

Boundedness of $u^f_T$, polynomial boundedness of  $\partial_t u^f_T$, and  Assumptions \ref{assump:main_assumptions_1}\,.\,\ref{assump:ICS}  and \ref{assump:main_assumptions_2}\,.\,\ref{assump:inv_dist} allow us to use the dominated convergence theorem to compute
\begin{align}\label{eq:uf_deriv_lower_bound}
\frac{d}{dt} \|u^f_T(t,\cdot)\|^2_{L^2(\mu_{X}^*)}
=&2 \int u^f_T(t,x) \partial_tu^f_T(t,x)\mu_{X}^*(dx)\\
=& 2 \int u^f_T(t,x) (-A_X[u^f_T(t,\cdot)](x)-h(t,x)  u^f_T(t,x))\mu_{X}^*(dx) \,,\notag
\end{align} 
where we used the PDE \eqref{eq:Feynman_Kac_pde_app} to obtain the second line.

For all $t$, $u^f_T(t,\cdot)\in C^2_{pb}(\mathbb{R}^d)$ and therefore we can use Assumption \ref{assump:main_assumptions_2}\,.\,\ref{assump:log_Sob} to lower bound \eqref{eq:uf_deriv_lower_bound} as follows:
\begin{align}\label{eq:u_norm_deriv_lb}
&\frac{d}{dt} \|u^f_T(t,\cdot)\|^2_{L^2(\mu_{X}^*)}\\
\geq & -2\|u^f_T(t,\cdot)\|_{L^2(\mu_{X}^*)}^2\sup\left\{ \int   gA_X[g]d\mu_{X}^*+\int h_t g^2d\mu_{X}^*:g\in C^2_{pb}(\mathbb{R}^d),\|g\|_{L^2(\mu_{X}^*)}=1\right\}\notag\\
\geq&-2\|u^f_T(t,\cdot)\|_{L^2(\mu_{X}^*)}^2\sup\left\{ \int h_t g^2d\mu_{X}^*-C_*^{-1}\int g^2\log(g^2)d\mu_{X}^*:g\in C^2_{pb}(\mathbb{R}^d),\|g\|_{L^2(\mu_{X}^*)}=1\right\}\notag\\
\geq&-2C_*^{-1}\|u^f_T(t,\cdot)\|_{L^2(\mu_{X}^*)}^2\sup_{\nu:\mathrm{KL}(\nu\|\mu_{X}^*)<\infty}\left\{ \int C_* h_t  d\nu-\mathrm{KL}(\nu\|\mu_{X}^*)\right\}\notag\\
=&-2C_*^{-1}\|u^f_T(t,\cdot)\|_{L^2(\mu_{X}^*)}^2\log\left( \int e^{C_*h_t} d\mu_{X}^*\right)\,,\notag
\end{align}
 where the last line was obtained by using the formula for the convex conjugate of the KL divergence; see \cite[Proposition 4.5.1]{Dupuis_Ellis}.

Using the Markov property, the Cauchy-Schwarz inequality, the representation \eqref{eq:Feynman_Kac_formula_app}, and then Gr{\"o}nwall's inequality applied to \eqref{eq:u_norm_deriv_lb} with the terminal condition 
\begin{align}
\|u^f_T(T,\cdot)\|^2_{L^2(\mu_{X}^*)}=\|f\|^2_{L^2(\mu_{X}^*)}
\end{align}
 we obtain
\begin{align}
&  \mathbb{E}\left[ f(X_T)\exp\left(\int_0^T h(s, X_s )ds\right)\right]\\
=&\int \mathbb{E}\left[ f(X^{0,x}_T)\exp\left(\int_0^T h(s, X_s^{0,x})ds\right)\right]\mu_{X}^*(dx)\notag\\
\leq&\|u_T^f(0,\cdot)\|_{L^2(\mu_{X}^*)}\leq \|f\|_{L^2(\mu_{X}^*)}\exp\left(C_*^{-1}\int_0^T \log\left( \int e^{C_*h_t} d\mu_{X}^*\right)dt\right)\,.\notag
\end{align}
  Applying the above bound, which   holds for all $f\in C^\infty_c(\mathbb{R}^d)$, to a sequence of smooth bump functions $f_j$ that increase to $1$ and using the monotone convergence theorem to compute the limit as $j\to\infty$ we can conclude the claimed bound \eqref{eq:MGF_bound} for all $h\in C^\infty_c(\mathbb{R}^{1+d})$.

  Now suppose $h\in C_b(\mathbb{R}^{1+d})$ (bounded and continuous functions).   Construct a  sequence $h_j\in C^\infty_c(\mathbb{R}^{1+d})$ such that $\sup_j\|h_j\|_\infty<\infty$ and which converges pointwise to $h$.  Applying the above to each $h_j$ and using the dominated convergence theorem we obtain  \eqref{eq:MGF_bound} for all  $h\in C_b(\mathbb{R}^{1+d})$.

Finally, let  $h$ be as in Assumption \ref{assump:h_assump1} (continuously extended to $\mathbb{R}^{1+d}$).    Define $h_{\ell,m}\coloneqq- \ell 1_{h<-\ell}+h1_{-\ell\leq h\leq m}+m1_{h>m}$,  $\ell,m\in\mathbb{Z}^+$, and note that $h_{\ell,m}\in C_b(\mathbb{R}^{1+d})$,  $\lim_{\ell\to\infty}h_{\ell,m}=h_{\infty,m}\coloneqq h1_{h\leq m}+m1_{h>m}$  and $\lim_{m\to\infty}h_{\infty,m}=h$ pointwise.   In addition, we have the   $h_{\infty,m}\leq h_{\infty,m+1}$, $|h_{\ell,m}|\leq |h|$ and $|h_{\infty,m}|\leq |h|$. Using these together with  continuity of $t\mapsto X_t$, we are able to apply the dominated convergence theorem to obtain the pointwise limits
\begin{align}
&\lim_{m\to\infty}\int_0^T h_{\infty,m}(t,X_t)dt=\int_0^T h(t,X_t)dt\,,\\
&\lim_{\ell\to\infty}\int_0^Th_{\ell,m}(t,X_t) dt=\int_0^Th_{\infty,m}(t,X_t) dt\,.\notag
\end{align}
  Now apply the monotone convergence theorem and then the dominated convergence theorem to compute
\begin{align}
\mathbb{E}\left[\exp\left(\int_0^Th(t,X_t) dt\right)\right]
=&\lim_{m\to\infty}\mathbb{E}\left[\exp\left(\int_0^Th_{\infty,m}(t,X_t) dt\right)\right]\\
=&\lim_{m\to\infty}\lim_{\ell\to\infty}\mathbb{E}\left[\exp\left(\int_0^Th_{\ell,m}(t,X_t) dt\right)\right]\,.\notag
\end{align}
For all $\ell,m$ we have $h_{\ell,m}\in C_b(\mathbb{R}^{1+d})$, hence we can apply the previously proven case of this result to obtain
\begin{align}\label{eq:MGF_bound_double_limit}
\mathbb{E}\left[\exp\left(\int_0^Th(t,X_t) dt\right)\right]
\leq&\lim_{m\to\infty}\lim_{\ell\to\infty} \exp\left(C_*^{-1}\int_0^T\log\left( \int e^{C_* (h_{\ell,m})_t } d\mu_{X}^*\right)dt\right)\,.
\end{align}
Using Jensen's inequality together with the bounds  $|h_{\ell,m}|\leq |h|$  and $h_{\ell,m}\leq m$ we can compute
\begin{align}\label{eq:exp_int_lb}
-\int C_* |h_t|  d\mu_*\leq\log\left( \int e^{C_* (h_{\ell,m})_t } d\mu_{X}^*\right)\leq C_* m \,,
\end{align}
 where we note that both the upper and lower bounds are in $L^1([0,T],dt)$.  Hence we can apply the dominated convergence theorem to compute the inner limit in \eqref{eq:MGF_bound_double_limit}, thereby obtaining
\begin{align}
\mathbb{E}\left[\exp\left(\int_0^Th(t,X_t) dt\right)\right]
\leq&\liminf_{m\to\infty}  \exp\left(C_*^{-1}\int_0^T\log\left( \int e^{C_* (h_{\infty,m})_t } d\mu_{X}^*\right)dt\right)\,.
\end{align}
Finally, using the bound $h_{\infty,m}\leq h$ we arrive at   \eqref{eq:MGF_bound} in the general case; note that the right-hand side of  \eqref{eq:MGF_bound} is well-defined, though possibly equal to $+\infty$, due to the integrability of the lower bound in \eqref{eq:exp_int_lb}.
\end{proof}

%\bibliographystyle{siamplain}
%\bibliography{refs}
\bibliography{UQ_SDEs_Perturbed_Diffusion.bbl}

\end{document}